\pgfplotsset{compat=1.7}
\newtheorem{theorem}{Theorem}
\newtheorem{assumption}{\hspace{0pt}\bf Assumption}
\newtheorem{lemma}{\hspace{0pt}\bf Lemma}
\newtheorem{proposition}{\hspace{0pt}\bf Proposition}
\newcommand{\INDSTATE}[1][1]{\STATE\hspace{3mm}}
\newcommand{\INDSTATED}[1][1]{\STATE\hspace{6mm}}
\DeclarePairedDelimiter\ceil{\lceil}{\rceil}
\title{Nonstationary Nonparametric Online Learning: Balancing Dynamic Regret and Model Parsimony}
\author{Amrit~Singh~Bedi,
    Alec~Koppel,~Ketan~Rajawat, and Brian M. Sadler
    \thanks{
        A.S. Bedi and A. Koppel contributed equally to this work. They both are with the U.S. Army Research Laboratory, Adelphi, MD, USA (e-mail: amrit0714@gmail.com, alec.e.koppel.civ@mail.mil). K. Rajawat is with the Department of Electrical Engineering,
        Indian Institute of Technology Kanpur, Kanpur 208016, India (e-mail:
        ketan@iitk.ac.in). B. M. Sadler is a senior scientist with the U.S. Army Research Laboratory, Adelphi, MD, USA (email:brian.m.sadler6.civ@mail.mil). A part of this work is submitted to American Control Conference (ACC), Denver, CO, USA, 2020 \cite{BEdi_ACC}.}
      \vspace{-8mm}}
\begin{document}

\maketitle


\begin{abstract}
An open challenge in supervised learning is \emph{conceptual drift}: a data point begins as classified according to one label, but over time the notion of that label changes. Beyond linear autoregressive models, transfer and meta learning address drift, but require data that is representative of disparate domains at the outset of training. To relax this requirement, we propose a memory-efficient \emph{online} universal function approximator based on compressed kernel methods. Our approach hinges upon viewing non-stationary learning as online convex optimization with dynamic comparators, for which performance is quantified by dynamic regret.

 Prior works control dynamic regret growth only for linear models. In contrast, we hypothesize actions belong to reproducing kernel Hilbert spaces (RKHS).  We propose a functional variant of online gradient descent (OGD) operating in tandem with greedy subspace projections. Projections are necessary to surmount the fact that RKHS functions have complexity proportional to time.
 
For this scheme, we establish sublinear dynamic regret growth in terms of both loss variation and functional path length, and that the memory of the function sequence remains moderate. Experiments demonstrate the usefulness of the proposed technique for online nonlinear regression and classification problems with non-stationary data.

\end{abstract}
%
%
\section{Introduction}\label{sec:intro}
%
A well-known challenge in supervised learning is \emph{conceptual drift}: a data point begins as classified according to one label, but over time the notion of that label changes. For example, an autonomous agent classifies the terrain it traverses as grass, but as the sun sets, the grass darkens. The class label has not changed, but the data distribution has. Mathematically, this situation may be encapsulated by supervised learning with time-series data. 
Classical approaches assume the current estimate depends linearly on its past values, as in autoregressive models \cite{akaike1969fitting}, for which parameter tuning is not difficult \cite{brillinger1981time}. While successful in simple settings, these approaches do not apply to classification, alternate quantifiers of model fitness, or universal statistical models such as deep networks \cite{haykin1994neural} or kernel methods \cite{berlinet2011reproducing}. Such modern tools are essential to learning unknown dynamics when assumptions of linear additive Gaussian noise in system identification are invalid, for instance \cite{aastrom1971system,haykin1997adaptive}.

In the presence of non-stationarity, efforts to train models beyond linear have focused on recurrent networks \cite{jaeger2002tutorial}, but such approaches inherently require the temporal patterns of the past and future to be similar. In contrast, transfer learning seeks to adapt a statistical model trained on one domain to another \cite{pan2010survey}, but requires (1) data to be available in advance of training, and (2) a priori knowledge of when domain shifts happen, typically based on hand-crafted features. Meta-learning overcomes the need for hand-crafted statistics of domain shift by collecting experience over disparate domains and discerning decisions that are good with respect to several environments' training objectives \cite{thrun2012learning}. Combining such approaches with deep networks have yielded compelling results recently \cite{andrychowicz2016learning,finn2017meta}, although they still require (1) offline training. Hence, in domains where a priori data collection is difficult, due to, e.g., lack of cloud access or rapid changes in the environment, transfer and meta-learning do not apply. In these instances, \emph{online training} is required. 

For online training, there are two possible approaches to define learning in the presence of non-stationarity: expected risk minimization \cite{Vapnik1995,friedman2001elements}, and online convex optimization (OCO) \cite{shalev2012online}. The former approach, due to the fact the data distribution is time-varying distribution, requires the development of stochastic algorithms whose convergence is attuned to temporal aspects of the distribution such as mixing rates \cite{borkar2009stochastic,mohri2010stability}. Although mixing rates are difficult to obtain, they substantially impact performance \cite{nagabandi2018deep}. To mitigate these difficulties, we operate within online convex optimization.

{\bf Online convex optimization} OCO formulates supervised learning in a distribution-free manner \cite{shalev2012online}. At each time, a learner selects action $f_t$ after which an arbitrary convex cost $\ell_t:\ccalH
\times \reals^p \rightarrow \reals$ is evaluated as well as parameters $\bbx_t\in\ccalX\subset\reals^p$ of the cost $\ell_t$, i.e., the learner suffers cost  $\ell(f_t(\bbx_t))$. 
Typically, actions $f_t$ are defined by a parameter vector. In contrast, we hypothesize actions $f_t\in\ccalH$ belong to a \emph{function space} $\ccalH$ motivated by nonparametric regression whose details will be deferred to later sections \cite{wasserman2006all}. 
In classic OCO, one compares cost with a single best action in hindsight; however, with non-stationarity, the quintessential quantifier of performance is instead \emph{dynamic regret}, defined as the cost accumulation as compared with a best action at each time:
%
%
 \begin{align}\label{dynamic}
\textbf{Reg}^D_T=& \sum_{t=1}^{T}L_{t}(f_{t}(\bbS_t))-\sum_{t=1}^{T}L_{t}(f^\star_t(\bbS_t)) \nonumber 
\\
\text{where}\ \ \quad f^\star_t= & \argmin_{f \in \ccalH} L_t(f(\bbS_t)).
\end{align}
%
%
OCO concerns the design of methods such that $ \textbf{Reg}_T^D$ grows sublinearly in horizon $T$ for a given sequence $f_t$, i.e., the average regret goes to null with $T$ ( referred to as no-regret \cite{Zinkevich2003}). 
Observe that $\textbf{Reg}^D_T$, in general, decouples the problem into $T$ time-invariant optimization problems since the minimizer is inside the sum. However, in practice, temporal dependence is intrinsic, as in wireless communications \cite{heath1998simple}, autonomous path planning \cite{vernaza2008online,turchetta2016safe}, or obstacle detection \cite{wurm2010octomap}. Thus, we define \eqref{dynamic} in terms of an augmented cost-data pair $(L_t, \bbS_t)$ which arises from several times, either due to new or previously observed pairs $(\ell_t, \bbx_t)$. Specifications of $L_t$ to time-windowing or batching are discussed in Sec. \ref{sec:problem}.
%
%
%
%
%
%
%

\begin{table*}\centering\hfill
\renewcommand{\arraystretch}{1.7}
\resizebox{\textwidth}{!}{\begin{tabular}{|l|l|l|l|l|}
\hline
Reference & Regret Notion & Loss & Function Class & Regret Bound \\\hline
\cite{Zinkevich2003,hall2015online}      & $\sum_{t=1}^T\ell_t(\bbw_t)-\ell_t(\bbw_t)$          & Convex          & Parametric             & $\mathcal{O}\left(\sqrt{T}(1+{W}_T)\right)$         \\
%
%
\cite{besbes}              &$\sum_{t=1}^T\mathbb{E}\left[\ell_t(\bbw_t)\right]- \ell_t(\bbw^\ast_t)$  \qquad   & Convex& Parametric  &  $\mathcal{O}\Big(T^{2/3}{(1+W_T)}^{1/3}\Big)$ \\
\cite{besbes}       & $\sum_{t=1}^T\mathbb{E}\left[\ell_t(\bbw_t)\right]- \ell_t(\bbw^\ast_t)$          & Strongly convex          & Parametric             & $\mathcal{O}(\sqrt{T{(1+W_T)}})$         \\
\cite{jadbabaie2015online}      & $\sum_{t=1}^T\ell_t(\bbw_t)- \ell_t(\bbw^\ast_t)$          & Convex          & Parametric             &  ${\mathcal{O}}\Big(\sqrt{D_T+1}+ \min\Big\{\sqrt{({D}_{T}+1) {V}_{T}} ,  [({D}_{T}+1)W_{T} T]^{1/3}\Big\}\Big)$  \\
\cite{mokhtari2016online,bedi}      & $\sum_{t=1}^T\ell_t(\bbw_t)- \ell_t(\bbw^\ast_t)$          & Strongly convex          & Parametric             & $\mathcal{O}(1+{{W}_T})$         \\      
\cite{shen2019random} & $\sum_{t=1}^{T}\ell_{t}(f_{t}(\bbw_t))-\sum_{t=1}^{T}\ell_{t}(f^\star_t(\bbw_t))$          & Convex          & Nonparametric             & $\mathcal{O}\left(T^{\frac{2}{3}}V_T^{1/3}\right)$        \\      
\textbf{This Work }& $\sum_{t=1}^{T}L_{t}(f_{t}(\bbS_t))-\sum_{t=1}^{T}L_{t}(f^\star_t(\bbS_t))$          & Convex          & Nonparametric             &   \red{$\mathcal{O}\left(T^{2/3}V_T^{1/3}+\epsilon T^{2/3}V_T^{-1/3}\right)$}\\
\textbf{This Work }& $\sum_{t=1}^{T}L_{t}(f_{t}(\bbS_t))-\sum_{t=1}^{T}L_{t}(f^\star_t(\bbS_t))$          & Convex          & Nonparametric             & \textbf{\red{${\mathcal{O}\left(1+ T\sqrt{\epsilon}+W_T\right)}$}}  \\
\textbf{This Work }& $\sum_{t=1}^{T}L_{t}(f_{t}(\bbS_t))-\sum_{t=1}^{T}L_{t}(f^\star_t(\bbS_t))$          & Strongly convex          & Nonparametric             & \textbf{\red{${o\left(1+ T\sqrt{\epsilon} +W_T\right)}$}} \\
\hline        
\end{tabular}}

\caption{\small Summary of related works on dynamic online learning. {In this work, we have derived the dynamic regret both in terms of $V_T$ and $W_T$ with an additional compression parameter $\epsilon$ to control complexity of nonparametric functions, which permits sublinear regret growth for dynamic regret in terms of $W_T$  under selection $\epsilon=\mathcal{O}\left(T^{-\alpha}\right)$ with $\alpha\in(0,\frac{1}{p}]$, where $p$ is the parameter dimension. {Note that for the strongly convex case  with $\epsilon=0$, we obtain $o(1+W_T)$ which is better than its parametric counterpart obtained in \cite{mokhtari2016online}}.{ In particular, we just need the compression budget to be $\epsilon<\mathcal{O}\left(\left(\frac{W_T}{T}\right)^2\right)$ to achieve $\mathcal{O}(1+W_T)$ dynamic regret.}}\normalsize}
%
\label{tab1}
\end{table*}

\subsection{Related Work and Contributions}
OCO seeks to develop algorithms whose regret grows sublinearly in time horizon $T$. In the static case, the simplest approach is online gradient descent (OGD), which selects the next action to descend along the gradient of the loss at the current time. OGD attains static regret growth $\ccalO(T^{1/2})$ when losses are convex \cite{Zinkevich2003} and $\ccalO(\log{T})$ strongly convex \cite{hazan2007logarithmic}, respectively. See Table \ref{tab1} for a summary of related works.

The plot thickens when we shift focus to dynamic regret: in particular, \cite{besbes} establishes the impossibility of attaining sublinear dynamic regret, meaning that one \emph{cannot} track an optimizer varying arbitrarily across time, a fact discerned from an optimization perspective in \cite{simonetto2016class}. 
Moreover, \cite{besbes} shows that dynamic regret to be an irreducible function of quantifiers of the problem dynamics called the cost function variation $V_T$ and variable variation $W_T$ (definitions in Sec. \ref{sec:problem}). Thus, several works establish sublinear growth of dynamic regret up to factors depending on $V_T$ and $W_T$, i.e., $\ccalO(T^{1/2}(1+W_T ) )$ for OGD or mirror descent with convex losses \cite{Zinkevich2003,hall2015online}, more complicated expressions that depend on $D_T$, the variation of instantaneous gradients \cite{jadbabaie2015online}, and $\mathcal{O}(1+{{W}_T})$ for strongly convex losses \cite{mokhtari2016online}.

The aforementioned works entirely focus on the case where decisions define a linear model $\bbw_t\in\ccalW\subset\reals^p$, which, by the estimation-approximation error tradeoff \cite{friedman2001elements}, yield small dynamic regret at the cost of large approximation error. Hypothetically, one would like actions to be chosen from a universal function class such as a deep neural network (DNN) \cite{tikhomirov1991representation,scarselli1998universal} or RKHS \cite{park1991universal} while attaining no-regret. It's well-understood that no-regret algorithms often prescribe convexity of the loss with respect to actions as a prerequisite \cite{shalev2012online}, thus precluding the majority of DNN parameterizations.
While exceptions to this statement exist \cite{amos2017input}, instead we focus on parameterizations defined in nonparametric statistics \cite{wasserman2006all}, namely, RKHS \cite{berlinet2011reproducing}, due to the fact they yield universality \emph{and} convexity. Doing so allows us to attain methods that are \emph{both} no-regret and universal in the non-stationary setting. We note that \cite{shen2019random} considers a similar setting based on random features \cite{rahimi2008random}, but its design cannot be tuned to the learning dynamics; and yields faster regret growth.

{\bf Contributions}
We propose a variant of OGD adapted to RKHS. A challenge for this setting is that the function parameterization stores all observations from the past \cite{Kivinen2004}, via the Representer Theorem \cite{scholkopfgeneralized}. To surmount this hurdle, we greedily project the functional OGD iterates onto subspaces constructed from subsets of points observed thus far which are $\epsilon$-close in RKHS norm  (Algorithm \ref{alg:soldd}), as in \cite{koppel2019parsimonious,koppelconsistent}, which allows us to explicitly tune the sub-optimality caused by function approximation, in contrast to random feature expansions \cite{rahimi2008random}.
 Doing so allows us to establish sublinear dynamic regret in terms of both the loss function variation (Theorem \ref{theorem:dynamic_cost}) and function space path length (Theorem \ref{theorem:dynamic_path}). Moreover, the learned functions yield finite memory (Lemma \ref{theorem_model_order}). In short, we derive a tunable tradeoff between memory and dynamic regret, establishing for the first time global convergence for a universal function class in the non-stationary regime (up to metrics of non-stationarity \cite{besbes}). 
These results translate into experiments in which one may gracefully address online nonlinear regression and classification problems with non-stationary data, contrasting alternative kernel methods and other state of the art online learning methods. 

\section{Non-Stationary Learning}\label{sec:problem}
In this section, we clarify details of the loss, metrics of non-stationarity, and RKHS representations that give rise to the derivation of our algorithms in Sec. \ref{sec:algorithm}.
To begin, we assume Tikhonov regularization, i.e.,  $\ell_{t}(f(\bbx)):=\check{\ell}_t(f(\bbx))+(\lambda'/{2})\|f \|^2_{\ccalH}$ for some convex function $\check{\ell}_t : \ccalH \times \ccalX \rightarrow \reals$, which links these methods to follow the \emph{regularized leader} in \cite{shalev2012online}.

{\bf Time-Windowing and Mini-Batching} To address when the solutions $f^\star_t$ are correlated across time or allow for multiple samples per time slot, we define several augmentations of loss data-pairs $(\ell_t, \bbx_t)$.
%

%
(i) Classical loss: $L_t = \ell_t$ and $\bbS_t = \bbx_t$, and the minimization may be performed over a single datum. In other words, the action taken depends only on the present, as in fading wireless communication channel estimation.
%
%
\begin{align}\label{eq:loss}
&\text{$(ii)$  $H$-Window}: L_t(f(\bbS_t))\!=\sum\limits_{\tau=t-H+1}^{t}\!\!\!\!\!\!\!\ell_\tau(f(\bbx_{\tau})) \; , \nonumber \\ \
&\text{ $(iii)$  Mini-batch} :  L_t(f(\bbS_t))= \sum_{i=1}^B\! \ell_t (f(\{\bbx_{t}^i\}_{i=1}^B)). 
\end{align}
The first cost $L_t(f(\bbS_t))$ in \eqref{eq:loss}{\it (ii)} for each time index $t$ consists $H-1$ previous cost-data pairs $\{\ell_\tau, \bbx_\tau\}_{\tau=t-P+1}^{t-1}$ and new cost-data pair $(\ell_t,\bbx_t)$, where we denote samples $\{\bbx_\tau\}$ in this time window as $\bbS_t$. $H=1$ simplifies to dynamic regret as in \cite{shen2019random}. \eqref{eq:loss} is useful for, e.g., obstacle avoidance, where obstacle is  correlated with time.  
%
%
%
Typically, we distinguish between the sampling rate of a system and the rate at which model updates occur. If one takes $B$ samples per update, then mini-batching is appropriate, as in \eqref{eq:loss}{\it (iii)} . 
%
%
In this work, we focus windowing in \eqref{eq:loss}{\it (ii)}, i.e., $H>1$. Further, instead of one point at $t$ given by $\bbx_t$, one may allow $B$ points $\{\bbx_i\}_{i=1}^B$, yielding a hybrid of \eqref{eq:loss}{\it (ii) - {\it (iii)}}. Our approach naturally extends to mini-batching. For simplicity, we focus on $B=1$. We denote $\check{L}_t$ as the component of \eqref{eq:loss} without regularization.

{\bf Metrics of Non-Stationarity} With the loss specified, we shift focus to illuminating the challenges of non-stationarity.
%
%
As mentioned in Sec. \ref{sec:intro}, \cite{besbes} establishes that designing no-regret [cf. \eqref{dynamic}] algorithms against dynamic comparators when cost functions change arbitrarily is impossible. Moreover, dynamic regret is shown to be an irreducible function of fundamental quantifiers of the problem dynamics called cost function variation and variable variation, which we now define. Specifically, the cost function variation $\text{Var}(L_1,L_2,\cdots,L_T)$ tracks the largest loss drift across time:
%
%
%
%
\begin{align}\label{variation}
& \text{Var}(L_1,L_2,\cdots,L_T):=\sum\limits_{t=2}^{T}|L_t-L_{t-1}| \; , \nonumber \\
& \quad \mathcal{V}:=\Big\{\{L_t\}_{t=1}^T\ ; \ \sum_{t=2}^{T}|L_{t}-L_{t-1}|\leq V_T\Big\},
\end{align}
where $|L_t-L_{t-1}|:=\sup_{f\in\mathcal{H}}|L_t(f(\bbS))-L_{t-1}(f(\bbS))|$ for all $\bbS {\in} \boldsymbol{\mathcal{X}}$ and denote $\mathcal{V}$ as the class of convex losses bounded by $V_T$ for any set of points $\bbS\in\boldsymbol{\mathcal{X}}$. 
%
%
%
Further define the variable variation $W_T$ as
\begin{align}\label{vtdy}
W_T := \sum_{t=1}^{T} \|f_{t+1}^\star-f_{t}^\star\|_{\ccalH}
\end{align}
which quantifies the drift of the optimal function $f_t^\star$ over time $t$. One may interpret \eqref{variation} and \eqref{vtdy} as the distribution-free analogue of mixing conditions in stochastic approximation with dependent noise in \cite{borkar2006stochastic} and reinforcement learning \cite{karmakar2017two}. Then, our goal is to design algorithms whose growth in dynamic regret \eqref{dynamic} is sub-linear, up to constant factors depending on the fundamental quantities \eqref{variation}-\eqref{vtdy}. 

\section{Algorithm Definition}
\label{sec:algorithm}
%
{\bf Reproducing Kernel Hilbert Space} With the metrics and motivation clear, we detail the function class $\ccalH$ that defines how decisions $f_t$ are made. As mentioned in Sec. \ref{sec:intro}, we would like one that satisfies universal approximation theorems \cite{park1991universal}, i.e., the hypothesis class containing the Bayes optimal \cite{friedman2001elements}, while also permitting the derivation of no-regret algorithms through links to convex analysis. RKHSs \cite{berlinet2011reproducing} meet these specifications, and hence we shift to explaining their properties. A RKHS is a Hilbert space equipped with an inner product-like map called a kernel $\kappa: \ccalX \times \ccalX \rightarrow \reals$ which satisfies
\begin{align} \label{eq:rkhs_def}
\textrm{(i)} \  \langle f , \kappa(\bbx, \cdot) \rangle _{\ccalH} = f(\bbx) \;,\qquad
\textrm{(ii)} \ \ccalH = \overline{\text{span}\{ \kappa(\bbx , \cdot) \}}
\end{align}
 %
 {for all } $\bbx \in \ccalX$. Common choices $\kappa$ include the polynomial kernel and the radial basis kernel, i.e., $\kappa(\bbx,\bbx') = \left(\bbx^T\bbx'+b\right)^c $ and $\kappa(\bbx,\bbx') = e^{ - ({\lVert \bbx - \bbx' \rVert_2^2})/{2c^2} }$, respectively, where $\bbx, \bbx' \in \ccalX$.
For such spaces, the function $f^\star(\bbx)$ that minimizes the sum, ${R}(f ; \{\bbx_{{t}} \}_{t=1}^T )= \frac{1}{T}\sum_{t=1}^T \ell_t(f ; (\bbx_{{t}}))$, over $T$ losses satisfies the Representer Theorem \cite{kimeldorf1971some,scholkopfgeneralized}. Specifically, the optimal $f$ may be written as a weighted sum of kernels evaluated \emph{only} at training examples as
$f(\bbx) = \sum_{t=1}^T w_{{t}} \kappa(\bbx_{{t}}, \bbx)$,
where $\bbw = [w_1, \cdots, w_T]^T \in \reals^T$ denotes a set of weights. We define the upper index $T$ as the \emph{model order}. 

One may substitute this expression into the minimization of $R(f)$ to glean two observations from the use of RKHS in online learning: the latest action is a weighted combination of kernel evaluations at previous points, e.g., a mixture of Gaussians or polynomials centered at previous data $\{\bbx_u\}_{u\leq T}$; and that the function's complexity becomes unwieldy as time progresses, since its evaluation involves all past points. Hence, in the sequel, we must control both the growth of regret \emph{and} function complexity.

{\bf Functional Online Gradient Descent} 
Begin with functional online gradient method, akin to \cite{Kivinen2004}: 
%
%
%
\begin{align}\label{eq:sgd_hilbert}
f_{t+1} =&(1-\eta H\lambda' ) f_{t} - \eta \nabla_f \check{L}_t (f_{t}(\bbS_t)) \nonumber 
\\ 
=&(1-\eta H\lambda' ) f_{t} - \eta  \sum\limits_{\tau=t-P+1}^{t}\check{\ell}_\tau'(f_t(\bbx_\tau)) \kappa(\bbx_\tau,\cdot) \; ,
\end{align}
where the later equality makes use of the definition of ${L}_t (f_{t}(\bbS_t))$ [cf. \eqref{eq:loss}], the chain rule, and the reproducing property of the kernel \eqref{eq:rkhs_def} -- see \cite{Kivinen2004}.
%
We define $\lambda=\lambda'H$. Step-size $\eta> 0$ is chosen as a small constant -- see Section \ref{sec:convergence}. We require that, given $\lambda > 0$, the step-size satisfies $\eta < 1/\lambda$ and initialization $f_0 = 0 \in \ccalH$. Given this initialization, one may apply induction and Representer Theorem \cite{scholkopfgeneralized} to write the function $f_t$ at time $t$ as a weighted kernel expansion over past data $\bbx_t$ as
\begin{align}\label{eq:kernel_expansion_t}
f_t(\bbx) 
= \sum_{u=1}^{t-1} w_u \kappa(\bbx_u, \bbx)
= \bbw_t^T\boldsymbol{\kappa}_{\bbX_t}(\bbx) \; .
\end{align}
On the right-hand side of \eqref{eq:kernel_expansion_t} we have introduced the notation $\bbX_t = [\bbx_1, \cdots, \bbx_{t-1}]\in \reals^{p\times (t-1)}$, $\boldsymbol{\kappa}_{\bbX_t}(\cdot) = [\kappa(\bbx_1^1,\cdot),\cdots,\kappa(\bbx_{t-1},\cdot)]^T$, and $\bbw_t=[w_1 ; \cdots ; w_{t-1}]$. We may glean from  \eqref{eq:kernel_expansion_t}, that the functional update \eqref{eq:sgd_hilbert} amounts to updates on the data matrix $\bbX$ and coefficient $w_{t+1}$:
%
%
\begin{align}\label{eq:param_update} 
\bbX_{t+1} = [\bbX_t, \;\; \bbx_t],\;\;\;\; w_{t+1} = -\eta\check{\ell}_t'(f_t(\bbx_t)) \; , 
\end{align}
In addition, we need to update the last $H-1$ weights over range $\tau=t-H+1$ to $t-1$:
\begin{align}\label{eq:coefficient_horizon}
w_\tau \!=\! \begin{cases*}\!\!
%
  (1\!-\!\eta \lambda) w_\tau \!-\! \eta\check{\ell}_\tau'(f_t(\bbx_\tau))  \text{ for } \tau \in \{t\!-\!H\!+\!1,\dots,t\!-\!1\} \\
  \!\!(1\!-\!\eta \lambda) w_\tau \qquad\quad\qquad \ \ \  \text{ for } \tau < t-H+1.
\end{cases*}
\end{align}
Observe that \eqref{eq:param_update} causes $\bbX_{t+1}$ to have one more column than $\bbX_t$. Define the \emph{model order} as number of points (columns) $M_t$ in the data matrix at time $t$. $M_t=t-1$ for OGD, growing unbounded.

%
\begin{algorithm}[t]
\caption{Dynamic Parsimonious Online Learning with Kernels (DynaPOLK)}
\begin{algorithmic}
\label{alg:soldd}
\REQUIRE $\{\bbx_t,\eta,\epsilon \}_{t=0,1,2,...}$
\STATE \textbf{initialize} ${f}_0(\cdot) = 0, \bbD_0 = [], \bbw_0 = []$, i.e. initial dictionary, coefficient vectors are empty
\FOR{$t=0,1,2,\ldots$}
	\STATE Obtain independent data realization $(\bbx_t)$ and loss $\ell_t(\cdot)$
	\STATE Compute unconstrained functional online gradient step \vspace{-0mm}
	{$$\tilde{f}_{t+1}(\cdot) = (1-\eta \lambda){f}_t - \eta\nabla_f\check{L}_t({f}_t(\bbS_t))$$} \vspace{-4mm}
	\STATE Revise dict. $\tbD_{t+1} = [\bbD_t,\;\;\bbx_t]$, weights $\bbw_{t+1}$ via \eqref{eq:param_tilde}-\eqref{eq:param_tilde_horizon}\vspace{-0mm}
	\STATE Compress function via KOMP \cite{Vincent2002} with budget $\epsilon$\vspace{-2mm}
	$$({f}_{t+1},\bbD_{t+1},\bbw_{t+1}) = \textbf{KOMP}(\tilde{f}_{t+1},\tbD_{t+1},\tbw_{t+1},\epsilon)$$
\ENDFOR
\end{algorithmic}
\end{algorithm}

%
{\bf Model Order Control via Subspace Projection}
To overcome the aforementioned bottleneck, we propose projecting the OGD sequence \eqref{eq:sgd_hilbert} onto subspaces $\ccalH_\bbD \subseteq \ccalH$ defined by some dictionary $\bbD = [\bbd_1,\ \ldots,\ \bbd_M] \in \reals^{p \times M}$, i.e., $\ccalH_\bbD = \{f\ :\ f(\cdot) = \sum_{t=1}^M w_t\kappa(\bbd_t,\cdot) = \bbw^T\boldsymbol{\kappa}_{\bbD}(\cdot) \}=\text{span}\{\kappa(\bbd_t, \cdot) \}_{t=1}^M$, inspired by \cite{koppel2019parsimonious}. For convenience we have defined $[\boldsymbol{\kappa}_{\bbD}(\cdot)=\kappa(\bbd_1,\cdot) \ldots \kappa(\bbd_M,\cdot)]$, and $\bbK_{\bbD,\bbD}$ as the resulting kernel matrix from this dictionary. We  ensure parsimony by ensuring $M_t \ll t$.
%

Rather than allowing model order of $f$ to grow in perpetuity [cf. \eqref{eq:param_update}], we project $f$ onto subspaces defined by dictionaries $\bbD=\bbD_{t+1}$ extracted from past data.  Deferring the selection of $\bbD_{t+1}$ for now, we note it has dimension $p \times {M}_{t+1}$, with ${M}_{t+1} \ll t$.  Begin by considering function ${f}_{t+1}$ is parameterized by dictionary $\bbD_{t+1}$ and weight vector $\bbw_{t+1}$. Moreover, we denote columns of $\bbD_{t+1}$ as $\bbd_t$ for $t=1,\dots,{M}_{t+1}$. We propose a projected variant of OGD:
%
\begin{align}\label{eq:projection_hat}
\!\!\!{f}_{t+1}\! =&\! \argmin_{f \in \ccalH_{\bbD_{t+1}}}  \!\Big\lVert f\! - \!
\Big(\!(\!1-\eta \lambda) f_t 
\!-\! \eta \nabla_f \check{L}_t(f_{t}(\bbS_t)) \!\Big)\Big\rVert_{\ccalH}^2 \nonumber \\
&:=\ccalP_{\ccalH_{\bbD_{t+1}}}\! \!\Big[ \!
(1\!-\!\eta \lambda) f_t 
- \eta \nabla_f\check{L}_t(f_{t}(\bbS_t)) \Big] \! 
\end{align}
where we define the projection operator $\ccalP$ onto subspace $\ccalH_{\bbD_{t+1}}\subset \ccalH$ by the update \eqref{eq:projection_hat}.  

{\bf Coefficient update} The update \eqref{eq:projection_hat}, for a fixed dictionary $\bbD_{t+1} \in \reals^{p\times M_{t+1}}$, implies an update only on coefficients. To illustrate this point, define the online gradient update without projection, given function ${f}_t$ parameterized by dictionary $\bbD_t$ and coefficients $\bbw_t$, as
%
$\tilde{f}_{t+1} =(1 - \eta H\lambda ) {f}_t - \eta\nabla_f\check{L}_t({f}_t(\bbS_t)).$
%
This update may be represented using dictionary and weight vector as 
\begin{align}\label{eq:param_tilde}
\tbD_{t+1} = [\bbD_t,\;\;\bbx_t], \;\;\;\; w_{t+1} = -\eta\check{\ell}_t'({f}_t(\bbx_t))\; .
\end{align}
and revising last $H-1$ weights with $\tau=t-H+1$ to $t-1$, yielding the update for coefficients as  
\begin{align}\label{eq:param_tilde_horizon}
w_{\tau}=
\begin{cases}
\!(1\! - \!\eta \lambda) w_\tau\!-\!\eta\check{\ell}_\tau'(f(\bbx_\tau)) \ \  \text{ for } \tau\!=\!t\!-\!H\!+\!1,\dots,t\!-\!1 
\\
(1 - \eta \lambda) w_\tau \qquad \qquad \qquad \text{ for } \tau < t-H+1.
\end{cases}
\end{align}
%
%
%
For fixed dictionary $\bbD_{t+1}$, the projection \eqref{eq:projection_hat} is a least-squares problem on coefficients  $\bbw_{t+1}$ \cite{williams2001using}: 
\begin{align} \label{eq:hatparam_update}
\bbw_{t+1}=  \bbK_{\bbD_{t+1} \bbD_{t+1}}^{-1} \bbK_{\bbD_{t+1} \tbD_{t+1}} \tbw_{t+1} \;.
\end{align}
Given that projection of $\tilde{f}_{t+1}$ onto subspace $\ccalH_{\bbD_{t+1}}$ for a fixed dictionary $\bbD_{t+1}$ is a simple least-squares multiplication, we turn to explaining the selection of the kernel dictionary $\bbD_{t+1}$ from past data $\{\bbx_u\}_{u \leq t}$.

{\bf Dictionary Update} One way to obtain the dictionary $\bbD_{t+1}$ from $\tbD_{t+1}$, as well as the coefficient $\bbw_{t+1}$, is to apply a destructive variant of \emph{kernel orthogonal matching pursuit} (KOMP) with pre-fitting \cite{Vincent2002}[Sec. 2.3]  as in \cite{koppel2019parsimonious}. KOMP operates by beginning with full dictionary $\tbD_{t+1}$ and sequentially removing columns while the condition $\|\tilde{f}_{t+1} - f_{t+1}\|_{\ccalH}\leq \epsilon$ holds. The projected FOGD is defined as:
\begin{equation}\label{equ:KOMP}
({f}_{t+1},\bbD_{t+1},\bbw_{t+1}) = \textbf{KOMP}(\tilde{f}_{t+1},\tbD_{t+1},\tbw_{t+1},\epsilon),
\end{equation}
where $\epsilon$ is the compression budget which dictates how many model points are thrown away during model order reduction. By design,  we have $\|{f}_{t+1}-\tilde{f}_{t+1}\|_{\ccalH}\leq \epsilon$, which allows us tune $\epsilon$ to only keep dictionary elements critical for online descent directions. These details allow one to implement Dynamic Parsimonious Online Learning with Kernels (DynaPOLK) (Algorithm \ref{alg:soldd}) efficiently.  Subsequently, we discuss its theoretical and experimental performance.

\section{Balancing Regret and Model Parsimony}\label{sec:convergence}
In this section, we establish the sublinear growth of dynamic regret of Algorithm \ref{alg:soldd} up to factors depending on \eqref{vtdy} and the compression budget parameter that parameterizes the algorithm. To do so, some conditions on the loss, its gradient, and the data domain are required which we subsequently state.
\begin{assumption}\label{as:first}
The feature space $\ccalX\subset\reals^p$ is compact, and the reproducing kernel is bounded:
\begin{align}\label{eq:bounded_kernel}
\sup_{\bbx\in\ccalX} \sqrt{\kappa(\bbx, \bbx )} = X < \infty.
\end{align}
\end{assumption}
%
\begin{assumption}\label{as:2}
{The loss $\check{\ell}_t: \ccalH \times \ccalX  \rightarrow \reals$ is uniformly $C$-Lipschitz continuous for all $z \in \reals $: }
\begin{align}\label{eq:lipschitz}
| \check{\ell}_t(z) - \check{\ell}_t( z') | \leq C |z - z'|.
\end{align}
\end{assumption}
%
%
\begin{assumption}\label{as:3}
The loss $\check{\ell}_t(f(\bbx))$ is convex and differentiable w.r.t. $f(\bbx)$ on $\reals$ for all $\bbx\in\ccalX$.  
\end{assumption}
\begin{assumption}\label{as:4}
The gradient of the loss $\nabla {\ell}_t(f(\bbx))$ is Lipschitz continuous with parameter $\tilde L>0$:
\begin{align}\label{gradient_assumption}
\|{\nabla}_f{\ell}_t(f(\bbS_t))-{\nabla}_g{\ell}_t(g(\bbS_t)) \|_{\ccalH}\leq \tilde L \|f-g\|_{\ccalH} 
\end{align} 
for all $t$ and $f,g\in\mathcal{H}$. 
\end{assumption}
Assumption \ref{as:first} and Assumption \ref{as:3} are standard \cite{Kivinen2004,dai2014scalable}. Assumptions \ref{as:2} and \ref{as:4} ensures the instantaneous loss $\check{\ell}_t(\cdot)$ and its derivative are smooth, which is usual for gradient-based optimization \cite{Bertsekas1999}, and holds, for instance, for the square, squared-hinge, or logistic losses. Because we are operating under the windowing framework over last $P$ losses \eqref{eq:loss}, we define the Lipschitz constant of  $L_t(\cdot)$ as $CP$ and that of its gradient as $L=H \tilde{L}$. Doing so is valid, as the sum of Lipschitz functions is Lipschitz \cite{rudin1964principles}.
%

Before analyzing the regret of Alg. \ref{alg:soldd}, we discern the influence of the learning rate, compression budget, and problem parameters on the model complexity of the function. In particular, we provide a minimax characterization of the number of points in the kernel dictionary in the following lemma, which determines the required complexity for sublinear dynamic regret growth in different contexts.
%
\begin{lemma}\label{theorem_model_order}
Let $f_t$ be the function sequence of Algorithm \ref{alg:soldd} with step-size $\eta<\min\{1/\lambda,1/L\}$ and compression $\eps$. Denote $M_t$ as the model order (no. of columns in dictionary $\bbD_t$) of $f_t$. For a Lipschitz Mercer kernel $\kappa$ on compact set $\mathcal{X}\subseteq\mathbb{R}^p$, there exists a constant $Y$ s.t. for data $\{\bbx_t\}_{t=1}^\infty$, $M_t$ satisfies
 \begin{align}\label{model_order_0}
   H\leq M_t\leq Y({CH})^p\left(\frac{\eta}{\epsilon}\right)^p.
   \end{align}
\end{lemma}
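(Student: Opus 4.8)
The plan is to bound $M_t$ by treating the two inequalities in \eqref{model_order_0} separately, with essentially all the substance residing in the upper bound. The lower bound $H\le M_t$ I would read as the worst-case end of the minimax bracket: for a well-separated data stream the windowed gradient $\nabla_f\check{L}_t(f_t(\bbS_t))=\sum_{\tau=t-H+1}^{t}\check{\ell}_\tau'(f_t(\bbx_\tau))\kappa(\bbx_\tau,\cdot)$ in \eqref{eq:sgd_hilbert} injects $H$ mutually distant kernel atoms, none of which \textbf{KOMP} can discard without exceeding the tolerance $\epsilon$, so at least $H$ atoms persist. I would make this precise by observing that the descent direction spans an $H$-dimensional subspace of $\ccalH$ that the projection \eqref{eq:projection_hat} must reproduce to within $\epsilon$ once the horizon exceeds the window length.

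For the upper bound I would argue that the \textbf{KOMP} stopping rule forces the retained dictionary atoms to be geometrically separated in $\ccalX$, and then invoke a packing estimate. First I would bound the coefficient magnitudes: by \eqref{eq:param_tilde} the newest weight satisfies $|w_{t+1}|=\eta|\check{\ell}_t'(f_t(\bbx_t))|\le \eta C$ under Assumption \ref{as:2}, and aggregating the windowed re-weighting \eqref{eq:param_tilde_horizon} over the $H$ active losses, the RKHS-norm perturbation contributed by a single step obeys $\|\tilde{f}_{t+1}-(1-\eta\lambda)f_t\|_{\ccalH}\le \eta C H X$, where $X$ is the kernel bound of Assumption \ref{as:first} and $CH$ is the Lipschitz constant of the windowed loss $L_t$.

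Next I would convert this into a separation radius. Destructive \textbf{KOMP} with budget $\epsilon$ keeps an atom $\kappa(\bbx_t,\cdot)$ only when its weighted distance to the span of the remaining atoms exceeds $\epsilon$; since the distance of $\kappa(\bbx_t,\cdot)$ to that span is at most its distance to the nearest single retained atom $\kappa(\bbd^\star,\cdot)$, and the Lipschitz Mercer kernel gives $\|\kappa(\bbx_t,\cdot)-\kappa(\bbd^\star,\cdot)\|_{\ccalH}\le L_\kappa\|\bbx_t-\bbd^\star\|$, retention forces $\|\bbx_t-\bbd^\star\|\ge \rho$ with $\rho = c\,\epsilon/(\eta C H L_\kappa)$ for a constant $c$. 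Hence every pair of dictionary points is $\rho$-separated. Because $\ccalX\subset\reals^p$ is compact, the number of $\rho$-separated points is at most its packing number $N(\ccalX,\rho)\le Y'(\text{diam}(\ccalX)/\rho)^p$; substituting $\rho$ and folding $L_\kappa$, $c$, $Y'$, and $\text{diam}(\ccalX)$ into a single constant $Y$ yields $M_t\le Y(CH)^p(\eta/\epsilon)^p$, which is \eqref{model_order_0}.

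The main obstacle I expect is the separation step of the previous paragraph: rigorously lower-bounding the \textbf{KOMP} removal error by the Euclidean separation requires care in carrying the coefficient bound through the windowed re-weighting \eqref{eq:param_tilde_horizon} (this is where the extra factor $H$ relative to the single-loss analysis of \cite{koppel2019parsimonious} enters), and in justifying that the distance to the span of all remaining atoms is controlled by the distance to the single nearest atom, which relies on the uniform boundedness $\|\kappa(\bbx,\cdot)\|_{\ccalH}\le X$. The packing estimate itself is standard for compact subsets of $\reals^p$ and supplies the dimension-dependent constant $Y$.
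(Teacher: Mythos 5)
Your proposal is correct and takes essentially the same route as the paper's proof: KOMP retention forces the new atom's coefficient-weighted distance to the span of the current dictionary to exceed $\epsilon$, the Lipschitz bound $|\nabla_f\check{L}_t(f_t(\bbS_t))|\leq CH$ converts this into a separation of order $\epsilon/(\eta CH)$ among dictionary atoms, and compactness of $\ccalX$ with a packing-number estimate yields $M_t\leq Y(CH)^p(\eta/\epsilon)^p$. The only cosmetic difference is that you unpack the kernel-Lipschitz-to-Euclidean packing step explicitly, whereas the paper delegates it to \cite[Proposition 2.2]{1315946}; your worst-case reading of the lower bound $H\leq M_t$ matches the paper's equally brief justification that representing the windowed gradient requires $H$ points.
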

Lemma \ref{theorem_model_order} (proof in Appendix \ref{proof_thm_4}) establishes that the model order of the learned function is lower bounded by the time-horizon $H$ and its upper bound depends on the ratio of the step-size to the compression budget, as well as the Lipschitz constant [cf. \eqref{eq:lipschitz}].
 Next, we shift to characterizing the dynamic regret of Algorithm \ref{alg:soldd}. 
%
Our first result establishes that the dynamic regret, under appropriate step-size and compression budget selection, grows sublinearly up to a factor that depends on a batch parameter and the cost function variation \eqref{variation}, and that the model complexity also remains moderate. This result extends \cite{besbes}[Proposition 2] to nonparametric settings. 

\begin{theorem}\label{theorem:dynamic_cost} 
Denote as $\{f_t\}$ the sequence generated by Algorithm \ref{alg:soldd} run for $T$ total iterations partitioned into $m=\ceil{\frac{T}{\triangle_T}}$ mini-horizons of length $\triangle_T$. Over mini-horizons, Algorithm \ref{alg:soldd} is run for $\triangle_T$ steps. Under Assumptions \ref{as:first}-\ref{as:4}, the {dynamic} regret \eqref{dynamic} grows with horizon $T$ and loss variation \eqref{variation} as:
\begin{align}\label{dynamic_regret_vT}
\textbf{Reg}^D_T = \ceil{\frac{T}{\triangle_T}}{\mathcal{O}\left(\frac{1+(\epsilon+\eta^2) \triangle_T}{\eta}\right)}+2\triangle_TV_T \; ,
\end{align}
which is sublinear for 
$\eta=\mathcal{O}(\triangle_T^{-a})$ and $\eps=\mathcal{O}(\triangle_T^{-b})$ with mini-horizon $\triangle_T=o(T)$, provided $p(a-b)\in(0,1)$. That is, with $\eta=\triangle_T^{-1/2}$ and $\eps=\triangle_T^{-(p-1)/2p}$, \eqref{dynamic_regret_vT} grows sublinearly in $T$ and $V_T$.
\end{theorem}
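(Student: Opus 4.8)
The plan is to transplant the restarting/blocking argument of \cite{besbes}[Proposition 2] from the parametric to the RKHS setting, using the projected functional OGD recursion \eqref{eq:projection_hat} as the inner solver. First I would partition $\{1,\dots,T\}$ into $m=\ceil{T/\triangle_T}$ consecutive mini-horizons $\ccalI_1,\dots,\ccalI_m$ of length $\triangle_T$ and restart Algorithm \ref{alg:soldd} (setting $f=0$) at the start of each block. Denoting by $\bar f_k:=\argmin_{f\in\ccalH}\sum_{t\in\ccalI_k}L_t(f(\bbS_t))$ the block-optimal fixed action, I would split each block's dynamic regret into a \emph{static-regret} part, $\sum_{t\in\ccalI_k}[L_t(f_t(\bbS_t))-L_t(\bar f_k(\bbS_t))]$, measured against $\bar f_k$, and a \emph{comparator-drift} part, $\sum_{t\in\ccalI_k}[L_t(\bar f_k(\bbS_t))-L_t(f_t^\star(\bbS_t))]$, and bound the two pieces separately.

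For the static-regret piece I would derive a projected-FOGD descent inequality. Writing $\tilde f_{t+1}=(1-\eta\lambda)f_t-\eta\nabla_f\check{L}_t(f_t(\bbS_t))$ for the pre-projection step, the KOMP stopping rule guarantees $\|f_{t+1}-\tilde f_{t+1}\|_{\ccalH}\le\eps$, so expanding $\|f_{t+1}-\bar f_k\|_{\ccalH}^2$ and treating the projection as an $\mathcal O(\eps)$ additive perturbation of the exact step gives, after invoking convexity (Assumption \ref{as:3}) and the bounded functional gradient $\|\nabla_f\check{L}_t\|_{\ccalH}\le CHX$ (from Assumptions \ref{as:first} and \ref{as:2}), a per-step bound $L_t(f_t)-L_t(\bar f_k)\le \tfrac{1}{2\eta}(\|f_t-\bar f_k\|_{\ccalH}^2-\|f_{t+1}-\bar f_k\|_{\ccalH}^2)+\tfrac{\eta}{2}(CHX)^2+\tfrac{c\,\eps}{\eta}$. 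Telescoping over the block collapses the first terms to the initial distance, which the regularized contraction $(1-\eta\lambda)$ and Assumptions \ref{as:first},\ref{as:2},\ref{as:4} confine to a fixed RKHS ball; summing the remaining two constants over the $\triangle_T$ steps yields the per-block static regret $\mathcal O\!\big(\tfrac{1}{\eta}+\eta\triangle_T+\tfrac{\eps\triangle_T}{\eta}\big)=\mathcal O\!\big(\tfrac{1+(\eps+\eta^2)\triangle_T}{\eta}\big)$, exactly the bracketed factor in \eqref{dynamic_regret_vT}.

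For the comparator-drift piece I would use the loss-variation definition \eqref{variation}: since $|L_t(f(\bbS))-L_s(f(\bbS))|\le\sum_{\tau}|L_\tau-L_{\tau-1}|$ uniformly in $f$ for any $t,s$ in a block, comparing both $\bar f_k$ and each $f_t^\star$ against the single reference $f_{t_k}^\star$ at the block's first index $t_k$, and invoking optimality of $\bar f_k$ for the block sum and of $f_{t_k}^\star$ for $L_{t_k}$, yields the Besbes-type estimate $\sum_{t\in\ccalI_k}[L_t(\bar f_k(\bbS_t))-L_t(f_t^\star(\bbS_t))]\le 2\triangle_T V^{(k)}$, with $V^{(k)}$ the within-block variation. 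Since $\sum_k V^{(k)}\le V_T$, summing over the $m=\ceil{T/\triangle_T}$ blocks produces the additive $2\triangle_T V_T$ term, and adding the two pieces gives \eqref{dynamic_regret_vT}. The sublinearity corollary then follows by inserting the schedules $\eta=\triangle_T^{-a}$, $\eps=\triangle_T^{-b}$: the four contributions scale as $T\triangle_T^{a-1}$, $T\triangle_T^{-a}$, $T\triangle_T^{a-b}$, and $\triangle_T V_T$, and I would read off the admissible window $p(a-b)\in(0,1)$ by simultaneously balancing these against the chosen $\triangle_T=o(T)$ and, through Lemma \ref{theorem_model_order}, requiring the model order $\mathcal O((\eta/\eps)^p)=\mathcal O(\triangle_T^{-p(a-b)})$ to remain finite.

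The step I expect to be the main obstacle is the static-regret descent inequality, because $\ccalP_{\ccalH_{\bbD_{t+1}}}$ is a projection onto a \emph{data-dependent subspace} that need not contain the comparator $\bar f_k$; it is non-expansive toward its own argument $\tilde f_{t+1}$ but not toward $\bar f_k$. Controlling the cross term $\langle \tilde f_{t+1}-\bar f_k,\ f_{t+1}-\tilde f_{t+1}\rangle_{\ccalH}$ therefore demands a uniform a priori bound on $\|\tilde f_{t+1}-\bar f_k\|_{\ccalH}$, which I would secure by showing the contraction factor $(1-\eta\lambda)<1$ together with the $\eps$-bounded per-step perturbation traps every iterate in a ball of radius $\mathcal O(CHX/\lambda+\eps/(\eta\lambda))$. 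Getting the compression to enter the regret linearly as $\eps/\eta$ per step, rather than with a worse power, is the delicate accounting that ultimately pins down the exponent window $p(a-b)\in(0,1)$.
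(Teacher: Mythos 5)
Your proposal is correct and follows essentially the same route as the paper's proof: the Besbes-style restarting over $m=\ceil{T/\triangle_T}$ blocks, the split of each block's regret into a static part against the block optimum plus a comparator-drift part, the projected-FOGD descent inequality in which compression enters as an $\eps/\eta$ per-step perturbation (handled, exactly as in the paper's Lemma \ref{lemma1} together with Propositions \ref{prop_bounded}--\ref{prop1}, via Cauchy--Schwarz and the uniform iterate bound $CX/\lambda$, which also disposes of the comparator-outside-subspace concern you flag), the $2\triangle_T V_s$ per-block drift estimate summing to $2\triangle_T V_T$, and the same $\eta=\triangle_T^{-a}$, $\eps=\triangle_T^{-b}$ tuning checked against Lemma \ref{theorem_model_order} for the model order. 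The only cosmetic deviations are that you re-derive the static bound inline where the paper invokes its Theorem \ref{theorem:static}, and you obtain the drift estimate by the direct chain $L_t(f_{t_k}^\star)\le L_{t_k}(f_{t_k}^\star)+V^{(k)}\le L_{t_k}(f_t^\star)+V^{(k)}\le L_t(f_t^\star)+2V^{(k)}$ rather than the paper's contradiction argument -- the same estimate either way.
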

%
%
\begin{proof}
 %
 %
 %
 Consider the expression for the dynamic regret is given by
 \begin{align}\label{eq:dynamic_regret}
 \textbf{Reg}^D_T=&\sum_{t=1}^{T}L_{t}(f_{t}(\bbS_t))-\sum_{t=1}^{T}L_{t}(f^\star_t(\bbS_t)).
 \end{align}
 Add subtract the term $\sum_{t=1}^{T}\ell_{t}(f^\star(\bbx_t))$ to the right hand side of \eqref{eq:dynamic_regret}, we obtain
 \begin{align}\label{eq:dynamic_regret2}
 \textbf{Reg}^D_T=&\sum_{t=1}^{T}L_{t}(f_{t}(\bbS_t))-\sum_{t=1}^{T}L_{t}(f^\star(\bbx_t))\nonumber \\
 &+\sum_{t=1}^{T}L_{t}(f^\star(\bbx_t))-\sum_{t=1}^{T}L_{t}(f^\star_t(\bbS_t))\nonumber
 \\
& \quad =\textbf{Reg}^S_T+\sum_{t=1}^{T}[L_{t}(f^\star(\bbx_t))-L_{t}(f^\star_t(\bbS_t))].
 \end{align}
 We have utilized the definition of static regret in \eqref{static} to obtain \eqref{eq:dynamic_regret2}. Note that the behavior in terms of static regret of Algorithm \ref{alg:soldd} is characterized in Theorem \ref{theorem:static}. To analyze the dynamic regret in terms of $V_T$, we need to study the different between the static optimal and dynamic optimal given by the second term on the right hand side of \eqref{eq:dynamic_regret2}. The difference between the two benchmarks (static and dynamic) is determined by the size of $T$ and fundamental quantifiers of non-stationarity defined in Section \ref{sec:problem} . To connect \eqref{eq:dynamic_regret2} with the loss function variation, following \cite{besbes}, we split the interval $T$ into equal size $m$ batches with each of size $\triangle_T$ except the last batch given by $\ccalT_j = \{t\ ;\ (j-1)\triangle_T +1\leq t\leq \min\{j \triangle_T, T\} $  for $j=1,\dots,T$ where  $m=\ceil{\frac{T}{\triangle_T}}$. We can rewrite the expression in \eqref{eq:dynamic_regret2} as follows
 \begin{align}\label{eq:dynamic_regret_3}
 \textbf{Reg}^D_T
 =&\sum_{s=1}^{\ceil{\frac{T}{\triangle_T}}}\sum_{t\in\mathcal{T}_s}[L_{t}(f_{t}(\bbS_t))-L_{t}(f^\star_s(\bbx_t))]\nonumber
 \\
 &+\sum_{s=1}^{\ceil{\frac{T}{\triangle_T}}}\sum_{t\in\mathcal{T}_s}[L_{t}(f^\star_s(\bbx_t))-L_{t}(f^\star_t(\bbS_t))]
 \end{align}
 where we define $f^\star_s= \argmin_{f \in \ccalH}\sum\limits_{t\in\mathcal{T}_s} L_t(f(\bbS_t))$ for all $s=1, 2, \cdots, m$, and note that the outer sum over $s$ indexes the batch number, whereas inner one indexes elements of a particular batch $\ccalT_s$. The expression for the dynamic regret in \eqref{eq:dynamic_regret_3} is decomposed into two sums. Note that the first sum represents the sum of the regrets against a single batch action for each batch $\ccalT_s$. 
 The second term in \eqref{eq:dynamic_regret_3} quantifies the non-stationarity of the optimizer: it is a sum over differences between the best action over batch $s$ and corresponding dynamic optimal actions.  Next, we bound the each term on the right hand side of \eqref{eq:dynamic_regret_3} separately.  From the static regret in \eqref{static_Regret}, it holds that
 \begin{align}\label{eq:dynamic_regret_4}
 \sum_{t\in\mathcal{T}_s}\![L_{t}(f_{t}(\bbS_t))\!-\!L_{t}(f^\star_s(\bbx_t))]
 %
\!=\! \mathcal{O}\!\left(\frac{1\!+\!( \epsilon \!+\!\epsilon^2) \triangle_T}{\eta}\!+\!\eta \triangle_T\right)
 \end{align}
  for all $s=1, 2, \cdots, m$. To upper bound the term in \eqref{eq:dynamic_regret_3} associated with non-stationarity, i.e., the second term on the right-hand side, by definition of the minimum, we have
 \begin{align}\label{eq:cost_variation_bound}
 \sum_{t\in\mathcal{T}_s}\![L_{t}(f^\star_s(\bbx_t))\!-\!L_{t}(f^\star_t(\bbS_t))]\!\leq\!& \sum_{t\in\mathcal{T}_s}\![L_{t}(f_{k}^\star(\bbx_t))\!-\!L_{t}(f^\star_t(\bbS_t))]
 \end{align}
 where $k$ denotes the first epoch of batch $\mathcal{T}_s$ and the inequality in \eqref{eq:cost_variation_bound} holds from the optimality of $f^\star(\bbS_t)$. Further taking maximum over batch, we obtain the upper bound for \eqref{eq:cost_variation_bound} as 
 \begin{align}\label{eq:cost_variation_bound2}
 %
  \triangle_T \max_{t\in\mathcal{T}_s}[L_{t}(f_{k}^\star(\bbS_t))-L_{t}(f^\star_t(\bbS_t))].
 \end{align}
 Next, we need to upper bound the right hand side of \eqref{eq:cost_variation_bound2} in terms if the loss function variation budget $V_T$. To do that, let us first define the loss function variation over each batch $\mathcal{T}_s$ as follows
 \begin{align}
 V_s:=\sum_{t\in\mathcal{T}_s}|L_{t}-L_{t-1}|
 \end{align}
 and note that $V_T=\sum\limits_{s=1}^{m}V_s$. With this definition, we now  show that   
 \begin{align}\label{condition}
 \max_{t\in\mathcal{T}_s}[L_{t}(f_{k}^\star(\bbS_t))-L_{t}(f^\star_t(\bbS_t))]\leq 2V_s
 \end{align}
  by contradiction. Let us assume that the inequality in \eqref{condition} in not  true  which means that there is at least one epoch, say $m\in\mathcal{T}_s$, for which the following property is valid:
  \begin{align}\label{condition2}
  L_{m}(f_{k}^\star(\bbS_m))-L_{m}(f^\star_m (\bbS_m))>2V_s.
  \end{align}
  Since $V_j$ is the maximal variation for batch $\mathcal{T}_s$, it holds that
  \begin{align}\label{condition3}
  L_{t}(f_{m}^\star(\bbS_t))\leq L_{m}(f_{m}^\star(\bbS_m))+V_s.
  \end{align}
  Substituting the upper bound for $L_{m}(f_{m}^\star(\bbS_m))$ from \eqref{condition2} into \eqref{condition3}, we get
  \begin{align}\label{condition4}
  L_{t}(f_{m}^\star(\bbS_t))<& L_{m}(f_{k}^\star(\bbS_m))-V_s\nonumber\\
  \leq& L_{m}(f_{k}^\star(\bbS_m)).
  \end{align}
  for all $t\in\mathcal{T}_s$. The second inequality in \eqref{condition4} holds by dropping the negative terms. We note that the inequality in \eqref{condition4} is a contradiction for $t=m$, since a positive number cannot be less than itself. Therefore, the hypothesis in \eqref{condition2}  is invalid, which implies that  \eqref{condition} holds true. Next, we utilize the upper bound in \eqref{condition} to the right hand side of \eqref{eq:cost_variation_bound2}, we get
 \begin{align}\label{eq:cost_variation_bound22}
 \sum_{t\in\mathcal{T}_s}[L_{t}(f^\star(\bbS_t))-L_{t}(f^\star_t(\bbS_t))]\leq& 2\triangle_T V_s.
 \end{align}
 Now, we return to the aggregation of static regret and the drift of the costs over time in \eqref{eq:dynamic_regret2}, applying \eqref{eq:dynamic_regret_4} and \eqref{eq:cost_variation_bound22} into \eqref{eq:dynamic_regret_3} to obtain final expression for the dynamic regret as
 \begin{align}\label{dynamic_regret_vT22}
 \textbf{Reg}^D_T
 \leq& \ceil{\frac{T}{\triangle_T}} \mathcal{O}\left(\!\!\frac{1\!+\!( \epsilon \!+\!\epsilon^2) \triangle_T}{\eta}\!+\!\eta \triangle_T\right)\!\!+\!\!2\triangle_TV_T.
 \end{align}
 Suppose we make the parameter selections 
 \begin{align}\label{selection1}
  &\eta=\mathcal{O}(\triangle_T^{-a}) \ \ \ \text{and}\ \ \  \eps=\mathcal{O}(\triangle_T^{-b})
 \end{align}
 with $\triangle_T<\ccalO(T)$. Then the right-hand side of \eqref{dynamic_regret_vT22} takes the form
  \begin{align}\label{dynamic_regret_vT22}
 \textbf{Reg}^D_T
 \leq
&  \ceil{\frac{T}{\triangle_T}} \mathcal{O}\left(\triangle_T^{a}+( \triangle_T^{-b} +\triangle_T^{-2b}) \triangle_T^{(1+a)}+ \triangle_T^{1-a}\right)\nonumber
\\
&+2\triangle_TV_T.
 \end{align}
 %
with model order $M=  \mathcal{O}(\triangle_T^{p(a-b)})$ by substituting \eqref{selection1} into the result of Lemma \ref{theorem_model_order}.
 For the dynamic regret to be sublinear, we need $b\in(0,1)$ and $a\in(b,b+\frac{1}{p})$. As long as the dimension  $p$ is not too large, we always have a range for $a$. This implies that $p(a-b)\in(0,1)$ and hence $M$ is sublinear. One specification of that satisfies this range is $a=1/2$ and $b=(p-1)/2p$, as stated in Theorem \ref{theorem:dynamic_cost}. We obtain the result presented in Table \ref{tab1} for the selection $\eta=\mathcal{O}{(1/\sqrt{\triangle_T})}$ and $\triangle_T=\left(T/V_T\right)^{2/3}$. 
\end{proof}
The batch parameter $\triangle_T$ tunes static versus non-stationary performance: for large $\triangle_T$, then the algorithm attains smaller regret with respect to the static oracle, i.e., the first terms on the right-hand side of \eqref{dynamic_regret_vT}, but worse in terms of the non-stationarity as quantified by function variation $V_T$, the last term. On the other hand, if the batch size is smaller, we do worse in terms of static regret terms but better in terms of non-stationarity. This contrasts with the parametric setting as well \cite{besbes}: the $\mathcal{O}(\epsilon)$ term appears due to the compression-induced error. 
%

%
Up to now, we quantified algorithm performance by loss variation \eqref{variation}; however, this is only a surrogate for the performance of the sequence of \emph{time-varying} optimizers \eqref{vtdy}, which is fundamental in time-varying optimization \cite{simonetto2016class,simonetto2017time}, and may be traced to functions of bounded variation in real analysis \cite{rudin1964principles}. Thus, we shift focus to analyzing Algorithm \ref{alg:soldd} in terms of this fundamental performance metric.

First, we note that the path length \eqref{vtdy} is unique when losses are strongly convex. On the other hand, when costs are non-strongly convex, then \eqref{vtdy} defines a set of optimizers. Thus, these cases must be treated separately. First, we introduce an assumption used in the second part of the upcoming theorem.
%
\begin{assumption}\label{as:5}
The instantaneous loss $L_t: \ccalH \times \ccalX  \rightarrow \reals$ is strongly convex with parameter $\mu$:
\begin{align}\label{eq:strong_convexity}
 L_t(f) - L_t(\tilde f)  \geq \mu \| f - \tilde f \|_{\ccalH}^2  
\end{align}
 for all $t$ and any functions $f,\tilde f \in \ccalH$.
\end{assumption}

With the technical setting clarified, we may now present the main theorem regarding dynamic regret in terms of path length \eqref{vtdy}.
\begin{theorem}\label{theorem:dynamic_path}
Denote $\{f_t\}$ as the function sequence generated by Algorithm \ref{alg:soldd} run for $T$ iterations. Under Assumptions \ref{as:first}-\ref{as:4}, with regularization $\lambda>0$ the following dynamic regret bounds hold in terms of path length \eqref{vtdy} and compression budget $\epsilon$:
\begin{enumerate}[label=(\roman*)]
\item \label{theorem:dynamic_path_cvx} when costs $\ell_t$ are convex, regret is sublinear {with $\eta<\min\{\frac{1}{\lambda},\frac{1}{L}\}$} and for any {$\epsilon=\mathcal{O}\left(T^{-\alpha}\right)$ with $\alpha\in(0,\frac{1}{p}]$}, we have
\begin{align}\label{eq:dynamic_path_thm}
\textbf{Reg}^D_T &= \ccalO\left( \frac{1 + {T\sqrt{\epsilon} } + W_T }{\eta}  \right)\nonumber
\\
&= \ccalO\left( {1 + {T\sqrt{\epsilon} } + W_T }  \right).    	   	 	   	 	 	 
                \end{align}
%
%
%
\item \label{theorem:dynamic_path_strong_cvx} Alternatively, if the cost functions $\ell_t$ are strongly convex, i.e., Assumption \ref{as:5} holds, {with $\eta<\min\{\frac{1}{\lambda},\frac{\mu}{L^2}\}$} and for any {$\epsilon=\mathcal{O}\left(T^{-\alpha}\right)$ with $\alpha\in(0,\frac{1}{p}]$}, we have 
\begin{align}\label{eq:dynamic_path_thm_strongcvx}
\textbf{Reg}^D_T =& \ccalO\left(\frac{1 + {T\sqrt{\epsilon} } + W_T}{1-\rho} \right)\nonumber\\
=& {o\left({1 + {T\sqrt{\epsilon} } + W_T}\right)},
\end{align}
where $\rho:=\sqrt{(1-2\eta(\mu-\eta L^2))}\in(0,1)$ is a contraction constant for a given $\eta$.
%
\end{enumerate}
\end{theorem}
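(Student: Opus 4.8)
The plan is to treat both parts through a single contraction-type recursion on the tracking error $\|f_t-f_t^\star\|_{\ccalH}$, in the spirit of the parametric strongly-convex analysis of \cite{mokhtari2016online}, but routing the subspace projection through the KOMP approximation guarantee. The key preliminary observation is that, because of the Tikhonov term, $L_t$ is $\lambda$-strongly convex even when $\check{\ell}_t$ is merely convex (the setting of \ref{theorem:dynamic_path_cvx}), while Assumption \ref{as:5} supplies a (possibly larger) modulus $\mu$ in \ref{theorem:dynamic_path_strong_cvx}; in either case $L_t$ is $L$-smooth by Assumption \ref{as:4}. Since $f_t^\star$ is the \emph{unconstrained} minimizer over $\ccalH$, we have $\nabla_f L_t(f_t^\star)=0$, so the undistorted step $\tilde f_{t+1}=(1-\eta\lambda)f_t-\eta\nabla_f\check{L}_t(f_t(\bbS_t))$ contracts toward $f_t^\star$: expanding $\|\tilde f_{t+1}-f_t^\star\|_{\ccalH}^2$ and invoking strong convexity and smoothness yields $\|\tilde f_{t+1}-f_t^\star\|_{\ccalH}\le\rho\|f_t-f_t^\star\|_{\ccalH}$, with $\rho=1-\eta\lambda$ in \ref{theorem:dynamic_path_cvx} and $\rho=\sqrt{1-2\eta(\mu-\eta L^2)}<1$ in \ref{theorem:dynamic_path_strong_cvx}. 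This is exactly where the step-size restrictions $\eta<\min\{1/\lambda,1/L\}$ and $\eta<\min\{1/\lambda,\mu/L^2\}$ enter, ensuring $\rho\in(0,1)$.

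Next I would fold in the two perturbations that distinguish this setting from the parametric one. The KOMP compression writes $f_{t+1}=\ccalP_{\ccalH_{\bbD_{t+1}}}[\tilde f_{t+1}]$ with the budget guarantee $\|f_{t+1}-\tilde f_{t+1}\|_{\ccalH}\le\epsilon$, so by the triangle inequality $\|f_{t+1}-f_t^\star\|_{\ccalH}\le\rho\|f_t-f_t^\star\|_{\ccalH}+\epsilon$; introducing the moving comparator through $\|f_{t+1}-f_{t+1}^\star\|_{\ccalH}\le\|f_{t+1}-f_t^\star\|_{\ccalH}+\|f_t^\star-f_{t+1}^\star\|_{\ccalH}$ produces the master recursion $\|f_{t+1}-f_{t+1}^\star\|_{\ccalH}\le\rho\|f_t-f_t^\star\|_{\ccalH}+\|f_t^\star-f_{t+1}^\star\|_{\ccalH}+\epsilon$. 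Unrolling this recursion and exchanging the order of summation (a geometric series in $\rho$) gives $\sum_{t=1}^T\|f_t-f_t^\star\|_{\ccalH}\le\frac{1}{1-\rho}\big(\|f_1-f_1^\star\|_{\ccalH}+W_T+(\text{aggregated compression})\big)$, where the drift sum $\sum_t\|f_t^\star-f_{t+1}^\star\|_{\ccalH}$ is precisely the path length $W_T$ of \eqref{vtdy} and the accumulated per-step budget is collected into the $T\sqrt{\epsilon}$ term of \eqref{eq:dynamic_path_thm}.

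To finish \ref{theorem:dynamic_path_cvx}, I would convert tracking error into regret via Lipschitz continuity of $L_t$ in $f$ (Assumption \ref{as:2} together with the kernel bound $X$ of Assumption \ref{as:first}), giving $L_t(f_t(\bbS_t))-L_t(f_t^\star(\bbS_t))\le C'\|f_t-f_t^\star\|_{\ccalH}$ and hence $\textbf{Reg}^D_T\le C'\sum_t\|f_t-f_t^\star\|_{\ccalH}$; since $1-\rho=\Theta(\eta)$ the prefactor $\tfrac{1}{1-\rho}$ is $\ccalO(1/\eta)$, delivering $\ccalO\big(\tfrac{1+T\sqrt{\epsilon}+W_T}{\eta}\big)=\ccalO(1+T\sqrt{\epsilon}+W_T)$ at constant $\eta$, while boundedness of $\|f_1-f_1^\star\|_{\ccalH}$ (from the compact domain and bounded kernel) collapses the initial-condition term into the $\ccalO(1)$ constant. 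For \ref{theorem:dynamic_path_strong_cvx} I would sharpen the final step: since $\nabla_f L_t(f_t^\star)=0$, smoothness yields the quadratic bound $L_t(f_t)-L_t(f_t^\star)\le\tfrac{L}{2}\|f_t-f_t^\star\|_{\ccalH}^2$, so $\textbf{Reg}^D_T\le\tfrac{L}{2}\sum_t\|f_t-f_t^\star\|_{\ccalH}^2$; the sum of \emph{squared} tracking errors is of strictly smaller order than the sum of the errors, which upgrades the $\ccalO$ of \ref{theorem:dynamic_path_cvx} to the $o(1+T\sqrt{\epsilon}+W_T)$ claimed, with $\rho$ as stated and the improvement over the parametric $\ccalO(1+W_T)$ of \cite{mokhtari2016online} being exactly this quadratic gain.

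I expect the main obstacle to be the projection step. Unlike the parametric analyses, the iterate is projected onto a \emph{time-varying, data-dependent} subspace $\ccalH_{\bbD_{t+1}}$ that in general does not contain the comparator $f_t^\star$, so the usual nonexpansiveness of projection toward the comparator is unavailable; the entire effect of compression must instead be carried by the additive KOMP bound $\|f_{t+1}-\tilde f_{t+1}\|_{\ccalH}\le\epsilon$, and one must verify that this perturbation aggregates only to the benign $T\sqrt{\epsilon}$ contribution. Coupling this with Lemma \ref{theorem_model_order}, which simultaneously requires the budget to be coarse enough to keep the dictionary of sublinear order, is the delicate balance: the compression must be small enough for regret yet large enough for parsimony, all within the single step-size/budget window prescribed by the contraction conditions.
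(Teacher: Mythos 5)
Your unified contraction strategy diverges from the paper's proof in two places where it actually breaks. First, in part \ref{theorem:dynamic_path_cvx} the claimed contraction $\|\tilde f_{t+1}-f_t^\star\|_{\ccalH}\le(1-\eta\lambda)\|f_t-f_t^\star\|_{\ccalH}$ does not follow from "expanding the square and invoking strong convexity and smoothness" under the theorem's step-size condition: that expansion gives $\rho^2=1-2\eta\lambda+\eta^2L^2$, which is below one only when $\eta<2\lambda/L^2$ --- a restriction \emph{not} implied by $\eta<\min\{1/\lambda,1/L\}$ whenever $L>2\lambda$, which is the typical regime since $\lambda$ is a small regularization weight. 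You could repair this with the co-coercivity inequality for $\lambda$-strongly convex, $L$-smooth functions (contraction at rate $1-2\eta\lambda L/(\lambda+L)$ for $\eta\le 2/(\lambda+L)$), but note the paper deliberately avoids any contraction in part \ref{theorem:dynamic_path_cvx}: it keeps the suboptimality $L_t(f_t(\bbS_t))-L_t(f_t^\star(\bbS_t))$ inside the squared recursion, extracts it after a square root via the scalar inequality \eqref{main_inequality}, lower-bounds $1/\|f_t-f_t^\star\|_{\ccalH}$ by $\lambda/(2CX)$ using Proposition \ref{prop_bounded}, and telescopes so that the regret sum appears directly on the left-hand side --- this is what produces the $1/\eta$ prefactor without ever invoking the regularizer's strong convexity, and it is why the paper's part \ref{theorem:dynamic_path_cvx} genuinely covers the merely convex case.

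Second, and more seriously, your concluding step for part \ref{theorem:dynamic_path_strong_cvx} is invalid: from $L_t(f_t)-L_t(f_t^\star)\le\tfrac{L}{2}\|f_t-f_t^\star\|_{\ccalH}^2$ you assert that the sum of \emph{squared} tracking errors is of strictly smaller order than the sum of the errors. That is false in general: under persistent drift and a fixed compression budget, the recursion $a_{t+1}\le\rho a_t+\|f_{t+1}^\star-f_t^\star\|_{\ccalH}+\epsilon$ stabilizes $a_t:=\|f_t-f_t^\star\|_{\ccalH}$ at a nonvanishing level, and then $\sum_t a_t^2=\Theta\bigl(\sum_t a_t\bigr)$; one only ever has $\sum_t a_t^2\le(\sup_t a_t)\sum_t a_t$ with $\sup_t a_t=\Theta(1)$ by Proposition \ref{prop_bounded}, so no order improvement is available from squaring. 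The paper instead converts tracking error to regret \emph{linearly} --- first-order convexity, Cauchy--Schwarz, and a uniform gradient bound $G$, giving $\textbf{Reg}^D_T\le G\sum_t\|f_t-f_t^\star\|_{\ccalH}$ --- and obtains the $o(\cdot)$ claim not from a quadratic gain but from a constant comparison: choosing $\eta$ with $1-\rho>\eta$ (shown to reduce to $\eta<2(\mu-1)/(2L^2-1)$) makes the $1/(1-\rho)$ prefactor strictly smaller than the $1/\eta$ of the convex case. One point in your favor: routing the KOMP error through the triangle inequality \emph{after} contracting the unprojected step yields an additive $\epsilon$ per round, aggregating to $T\epsilon$, which is sharper than the paper's $T\sqrt{\epsilon}$ (the latter arises from the cross term $2\epsilon\|f_t-f_t^\star\|_{\ccalH}$ in the squared recursion of Lemma \ref{lemma1}); your description of this as "the $T\sqrt{\epsilon}$ term" undersells your own bound, and that piece of the argument is sound.
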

%
 
\hspace{-5mm}   \emph{ Proof of Theorem \ref{theorem:dynamic_path}\ref{theorem:dynamic_path_cvx}}
 %
 Begin by noting that the descent relation in Lemma \ref{lemma1} also holds for time-varying optimizers $f_t^\star$, which allows us to write
 \begin{align}\label{eq:iterate_prop1_subst0}
 \| f_{t+1}\!\! -\!\! f^\star_t \|_{\ccalH}^2 
 	 \leq&\| f_t - f^\star_t \|_{\ccalH}^2
 	 - 2 \eta [L_t(f_t(\bbS_t)) -L_t(f_t^\star(\bbS_t))] \nonumber\\
 	 &\!\!+ 2 \eps \| f_t\! - \!f^\star_t \|_{\ccalH} 
 	\!+\! \eta^2 \| \tilde{\nabla}_fL_{t}(f_{t}(\bbS_t)) \|_{\ccalH}^2. 
 \end{align}
 From the inequality in \eqref{proof_Strong_111}, we have 
 \begin{align}\label{proof_Strong_1112}
 \| \tilde{\nabla}_f L_t(f_{t}(\bbS_t)) \|_{\ccalH}^2
 \leq &\frac{2\epsilon^2}{\eta^2}+2\|{\nabla}_f L_t(f_{t}(\bbS_t))\|^2.
 \end{align}
 For a Lipschitz continuous gradient function [Assumption \ref{as:4}] with ${\nabla}_f L_t(f_{t}^\star(\bbS_t))=0$, we have 
 \begin{align}
 \|{\nabla}_fL_t(f_{t}(\bbS_t))\|^2\leq 2L[L_t(f_t(\bbS_t)) -L_t(f_t^\star(\bbS_t))],
 \end{align}
 which implies that
 \begin{align}\label{proof_Strong_1121}
\!\!\!\!\!\!\!\! \| \tilde{\nabla}_fL_t(f_{t}(\bbS_t)) \|_{\ccalH}^2\!\leq\!\frac{2\epsilon^2}{\eta^2}+2L[L_t(f_t(\bbS_t)) -L_t(f_t^\star(\bbS_t))].
 \end{align}
 Next, substitute the upper bound in \eqref{proof_Strong_1121}  for the last term on the right hand side of \eqref{eq:iterate_prop1_subst0}, we obtain
 \begin{align}\label{eq:iterate_prop1_subst32}
 \|& f_{t+1} - f^\star_t \|_{\ccalH}^2 
 \\ 
&\leq\| f_t \!-\! f^\star_t \|_{\ccalH}^2
 	 \!-\! 2 \eta [L_t(f_t(\bbS_t)) \!-\!L_t(f_t^\star(\bbS_t))]+ 2 \eps \| f_t \!-\! f^\star_t \|_{\ccalH}\nonumber\\
 		&\quad 
 		 	+ 2\epsilon^2 +2\eta^2L[L_t(f_t(\bbS_t)) -L_t(f_t^\star(\bbS_t))] \nonumber	\\
 	&=\| f_t - f^\star_t \|_{\ccalH}^2
 		 - 2 \eta(1-\eta L) [L_t(f_t(\bbS_t)) -L_t(f_t^\star(\bbS_t))]\nonumber 
 		  	 \\
 		  	  	 &\quad+ 2 \eps \| f_t - f^\star_t \|_{\ccalH} 
 		+ 2\epsilon^2 \nonumber	\\
 	&\leq\| f_t - f^\star_t \|_{\ccalH}^2
 				 - 2 \eta(1-\eta L) [L_t(f_t(\bbS_t)) -L_t(f_t^\star(\bbS_t))]\nonumber 
 				 \\
 				 &\quad + \frac{4 \eps CX}{\lambda} 
 				+ 2\epsilon^2. \nonumber
 \end{align}
 The second inequality in \eqref{eq:iterate_prop1_subst32} is obtained by using the  upper bound derived in Proposition \ref{prop_bounded}. 
 To proceed further, we will use the following inequality.  For positive scalars $u$, $v$, and $w$ that satisfy $2u^2 > v$, by simple manipulation of the quadratic formula over the positive reals, it holds that
 \begin{align}\label{main_inequality}
 \sqrt{u^2-v+w^2} &\leq \sqrt{u^2 - v + v^2/4u^2 +  w^2}  \\
 &= \sqrt{u^2\left(1\!-\!\frac{v}{2u^2}\right)^2 + w^2} 
 \leq u\left(1\!-\!\frac{v}{2u^2}\right) + w\nonumber\\
 & \hspace{4cm}= u - \frac{v}{2u} + w.\nonumber
 \end{align} 
 The first inequality in \eqref{main_inequality} holds since we add a positive quantity $\frac{\nu^2}{4u^2}$ inside the square root. After rearranging the terms, we get the second equality of \eqref{main_inequality}. With the condition $2u^2>\nu$ in hand, we used the inequality $\sqrt{a+b}\leq \sqrt{a}+\sqrt{b}$ for any non-negative $a$ and $b$. Again rearranging the terms, we obtain the final equality in \eqref{main_inequality}. 
 
 We can use \eqref{main_inequality} to upper-estimate the right-hand side of \eqref{eq:iterate_prop1_subst32} with the following identifications: $u=\| f_t - f_t^\star \|_{\ccalH}$, $v=2 \eta (1-\eta L) [L_t(f_t(\bbS_t)) -L_t(f_t^\star(\bbS_t))]$, and $w=\sqrt{\frac{4\epsilon CX}{\lambda}+ 
 	 2\epsilon^2}$, such that 
  \begin{align}\label{eq:expectation_convexity24}
  \|f_{t+1} \!\!-\!\! f_t^\star \|_{\ccalH}
  	 \!\leq&\| f_t \!-\! f_t^\star \|_{\ccalH}
  	 	 \!-\!  \eta(1\!-\!\eta L) \frac{L_t(f_t(\bbS_t)) \!-\!L_t(f_t^\star(\bbS_t))}{\| f_t - f_t^\star \|_{\ccalH}}\nonumber\\
  	 	 &  +\sqrt{\frac{4\epsilon CX}{\lambda}+ 
  	 	 	 2\epsilon^2}. \; 
  \end{align} 
  The inequality in \eqref{eq:expectation_convexity24} holds since for a Lipschitz gradient convex loss  function (c.f. Assumption \ref{as:4}), we have
  \begin{align}\label{another}
  L_t(f_t(\bbS_t)) -L_t(f_t^\star(\bbS_t))\leq\frac{L}{2}\| f_t - f_t^\star \|_{\ccalH}^2.
  \end{align}   
 Note to satisfy the condition $u^2>\frac{\nu}{2}$, it is sufficient to show that $u^2>{\nu}$ holds. Note that from \eqref{another}, it holds that
 \begin{align}
 u^2\geq \frac{\nu}{\eta L(1-\eta L)}
 \end{align}
 from the definitions of $\nu$ and $u$. The required condition of $u^2>v$ holds if we select $\eta<\frac{1}{L}$. Next, in order to derive the dynamic regret, from triangle's inequality, it holds that
 \begin{align}\label{temp1}
  \| f_{t+1} - f_{t+1}^\star \|_{\ccalH}&=\| f_{t+1} -f_{t}^\star+f_{t}^\star- f_{t+1}^\star \|_{\ccalH}\nonumber  \\
  &\leq \| f_{t+1} - f_t^\star \|_{\ccalH}+ \| f_{t+1}^\star - f_t^\star \|_{\ccalH}.
  \end{align}
  Substitute the upper bound in \eqref{eq:expectation_convexity24} for the first term on the right hand side of \eqref{temp1}, we get
  \begin{align}
  \!\!\!\| f_{t+1} \!-\! f_{t+1}^\star \|_{\ccalH}\!\leq &\| f_t \!-\! f_t^\star \|_{\ccalH}
   	 	 \!-\!  \eta(1\!\!-\!\!\eta L) \frac{L_t(f_t(\bbS_t)) \!-\!L_t(f_t^\star(\bbS_t))}{\| f_t - f_t^\star \|_{\ccalH}}\nonumber 
   	 	 \\
   	 	 &  +\sqrt{\frac{4\epsilon CX}{\lambda}+ 
   	 	 	 2\epsilon^2}+\| f_{t+1}^\star - f_t^\star \|_{\ccalH}.\label{eq:expectation_convexity27} 
 \end{align}
  Next, rearranging the terms in \eqref{eq:expectation_convexity27}, and 
%
   utilizing  the upper bound in Proposition \ref{prop_bounded}, it holds that $\frac{1}{\| f_t - f_t^\star \|_{\ccalH}}>\frac{\lambda}{2CX}$ and we obtain
   \begin{align}\label{eq:expectation_convexity26}
&     	 \frac{\eta\lambda(1\!\!-\!\!\eta L)}{2CX}  L_t(f_t(\bbS_t)) \!-\!L_t(f_t^\star(\bbS_t))\!
     	   	 	 \nonumber
     	   	 	 \\
     	   	 	 & \qquad \leq\!\| f_t \!\!-\!\! f_t^\star \|_{\ccalH}
     	   	 	 \!-\!\| f_{t+1} \!\!-\!\! f_{t+1}^\star \|_{\ccalH}   \nonumber \\
			   &\qquad\quad +  	 	\sqrt{\frac{4\epsilon CX}{\lambda}+ 
     	   	 	   	 	 	 2\epsilon^2}+\| f_{t+1}^\star - f_t^\star \|_{\ccalH}. \; 
     \end{align}
     %
     Take the summation from $t=1$ to $T$, we get
     \begin{align}\label{eq:expectation_convexity28}
         	& \frac{\eta\lambda(1-\eta L)}{2CX} \sum\limits_{t=1}^{T}[L_t(f_t(\bbS_t)) -L_t(f_t^\star(\bbS_t))]\\
	 &\quad \leq\| f_1 - f_1^\star \|_{\ccalH} 
         + T\sqrt{\frac{4\epsilon CX}{\lambda}+ 
         	   	 	   	 	 	 2\epsilon^2}\nonumber 
         	   	 	   	 	 	 \\
         	   	 	   	 	 	 &\qquad+\sum\limits_{t=1}^{T}\| f_{t+1}^\star - f_t^\star \|_{\ccalH}.\nonumber
         \end{align}
         We have dropped the negative terms on right hand side of \eqref{eq:expectation_convexity28}. Next, multiplying both sides  by $\frac{2CX}{\eta\lambda(1-\eta L)}$ and utilizing the definition of path length from \eqref{vtdy}, we get
         \begin{align}\label{eq:expectation_convexity29}
          \!\textbf{Reg}^D_T\!&\!\leq\!\!\frac{2CX\| f_1 \!\!-\!\! f_1^\star \|_{\ccalH}}{\eta\lambda(1\!\!-\!\!\eta L)}   
                 	   	 	\! +\!  	 	\frac{2CX}{\eta\lambda(1\!\!-\!\eta L)}\!\!\left(\!\!\!\!\sqrt{\frac{4T^2\epsilon CX}{\lambda}\!\!+\!\! 
                 	   	 	   	 	 	 2\epsilon^2T^2}\!\!+\!\!W_T\!\!\!\right).          \nonumber \\
 						& \leq \ccalO\left({ \frac{1 + T\sqrt{\epsilon}  + W_T }{\eta}}  \right)    	   	 	   	 	 	 
                 \end{align}
 which is sublinear in $T$ up to factors depending on path length $W_T$  for {$\epsilon=\mathcal{O}\left(T^{-\alpha}\right)$ with $\alpha\in(0,\frac{1}{p}]$} as stated in Theorem \ref{theorem:dynamic_path}\ref{theorem:dynamic_path_cvx}. \hfill $\qed$
 %

\hspace{-5mm}  \emph{Proof of Theorem\ref{theorem:dynamic_path}\ref{theorem:dynamic_path_strong_cvx}}
 %
 Again, we begin with the descent related stated in Lemma \ref{lemma1} for time-varying optimizer $f_t^\star$:
 \begin{align}\label{eq:iterate_prop1_subst2}
 \| f_{t+1} - f^\star_t \|_{\ccalH}^2 
 	 &\leq\| f_t - f^\star_t \|_{\ccalH}^2
 	 - 2 \eta [L_t(f_t(\bbS_t)) -L_t(f_t^\star(\bbS_t))]\nonumber \\
 	 	 &\quad\!\!\!\!+\!\! 2 \eps \| f_t \!-\! f^\star_t \|_{\ccalH}\!+\! \eta^2 \| \tilde{\nabla}_fL_{t}(f_{t}(\bbS_t)) \|_{\ccalH}^2. \end{align}
 Consider the last term in \eqref{eq:iterate_prop1_subst2} as follows
 \begin{align}\label{iterate_prop1_subst001}
 \| &\tilde{\nabla}_fL_{t}(f_{t}(\bbS_t)) \|_{\ccalH}
 \\&\!\!\!=\!\!\|\! \tilde{\nabla}_{\!\!f}\!L_{t}(\!f_{t}(\bbS_t\!)\!)\!-\!\!{\nabla}_{\!\!f}L_{t}(\!f_{t}(\bbS_t))\!+\!\!{\nabla}_{\!\!f}\!L_{t}(f_{t}(\!\bbS_t))\!\!-\!{\nabla}_{\!\!f}L_{t}(f_{t}^\star(\bbS_t)) \|_{\ccalH}\nonumber
 \end{align}
 where we add and subtract the term $ {\nabla}_fL_{t}(f_{t}(\bbS_t))$ and utilize the optimality condition that ${\nabla}_fL_{t}(f_{t}^\star(\bbS_t))=0$. Using Cauchy-Schwartz inequality and $(a+b)^2\leq(2a^2+2b^2)$ in \eqref{iterate_prop1_subst001}, we get 
 \begin{align}\label{proof_Strong_1}
 \!\!\!\!\!\!\| \tilde{\nabla}_fL_{t}(f_{t}(\bbS_t)) \|_{\ccalH}^2\leq &\Big(\| \tilde{\nabla}_fL_{t}(f_{t}(\bbS_t))-{\nabla}_fL_{t}(f_{t}(\bbS_t))\|_{\ccalH}\nonumber 
 \\ &+\|{\nabla}_fL_{t}(f_{t}(\bbS_t))-{\nabla}_fL_{t}(f_{t}^\star(\bbS_t)) \|_{\ccalH}\Big)^2\nonumber 
 \\
 & \hspace{-5mm}\leq  2\| \tilde{\nabla}_fL_{t}(f_{t}(\bbS_t))-{\nabla}_fL_{t}(f_{t}(\bbS_t))\|_{\ccalH}^2\nonumber 
  \\ &\hspace{-5mm}\!\!\!+2\|{\nabla}_fL_{t}(f_{t}(\bbS_t))-{\nabla}_fL_{t}(f_{t}^\star(\bbS_t)) \|_{\ccalH}^2.
 \end{align}
 Next, utilizing the result of Proposition \ref{prop1} and Assumption \ref{as:4} into \eqref{proof_Strong_1}, we obtain
  \begin{align}\label{proof_Strong_12}
  \| \tilde{\nabla}_fL_{t}(f_{t}(\bbS_t)) \|_{\ccalH}^2\!\leq\! &\frac{2\epsilon^2}{\eta^2}\!+\!2\|{\nabla}_fL_{t}(f_{t}(\bbS_t))\!-\!{\nabla}_fL_{t}(f_{t}^\star(\bbS_t)) \|_{\ccalH}^2\nonumber\\
  \leq &\frac{2\epsilon^2}{\eta^2}+2L^2\|f_t-f_t^\star\|_{\mathcal{H}}^2.
  \end{align}
 The last inequality in \eqref{proof_Strong_1} holds from Assumption \ref{as:4}. Next, substitute the upper bound in \eqref{proof_Strong_1}  into \eqref{eq:iterate_prop1_subst2}, we obtain
 \begin{align}\label{eq:iterate_prop1_subst3}
 \| f_{t+1}\! - \!f^\star_t \|_{\ccalH}^2 
 	 &\leq\| f_t \!-\! f^\star_t \|_{\ccalH}^2
 	 - 2 \eta [L_t(f_t(\bbS_t)) -L_t(f_t^\star(\bbS_t))] \nonumber \\
 	  	 	 &\quad\!\!\!\!+ 2 \eps \| f_t \!\!-\!\! f^\star_t \|_{\ccalH}	\!\!+\!\! 2\epsilon^2\!\!+\!\!2\eta^2L^2\|f_t\!-\!f_t^\star\|_{\mathcal{H}}^2.
 \end{align}
 %
\begin{figure*}[t]
	\centering 
	\subfigure[Dynamic regret]{\includegraphics[width=0.33\linewidth,height=4cm]{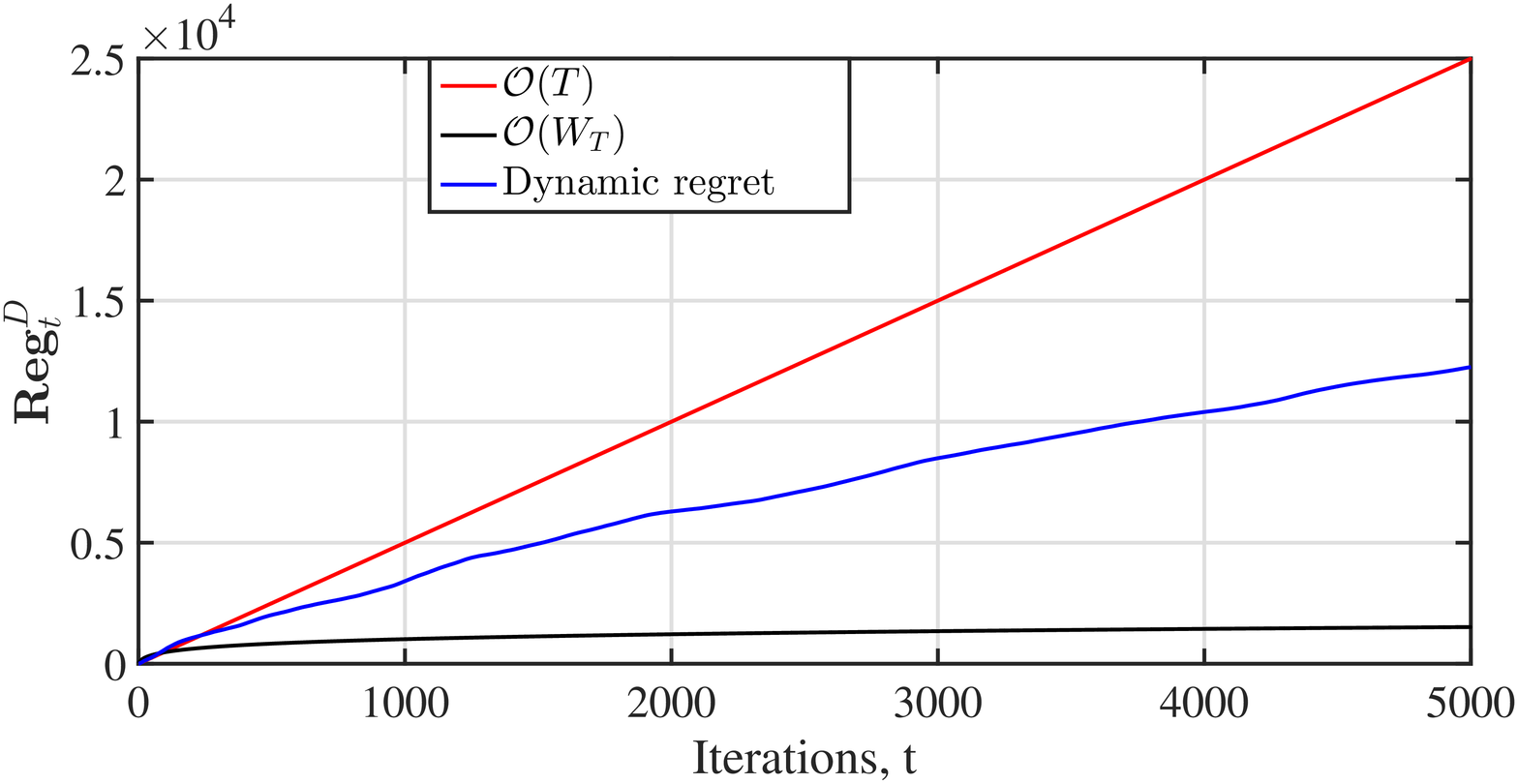}\label{regression1}}	
	\subfigure[Model order $M_t$]{\includegraphics[width=0.325\linewidth,height=4cm]{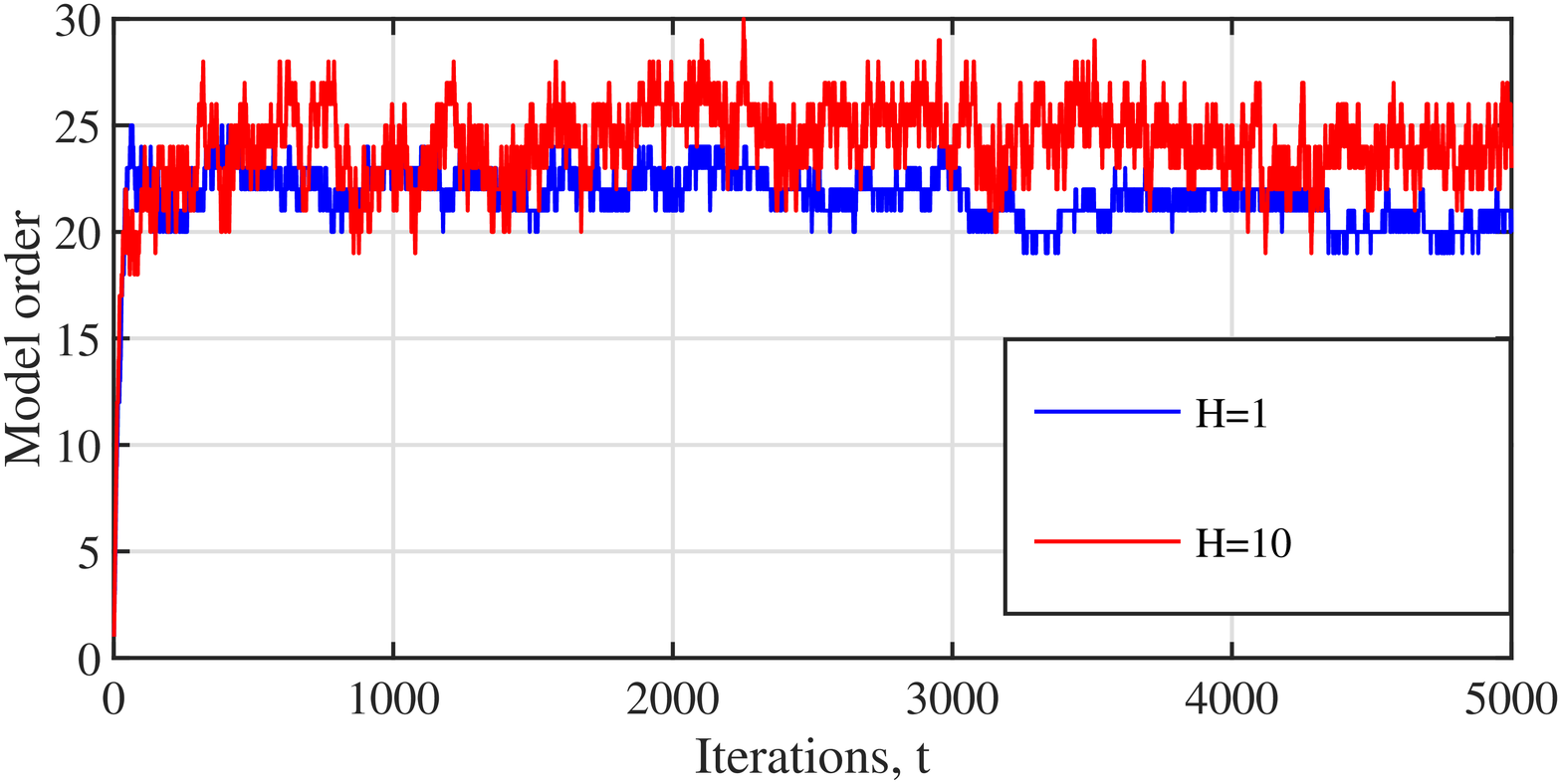}\label{regression2}}
	\subfigure[Non-Stationary Tracking]{\includegraphics[width=0.33\linewidth,height=4cm]{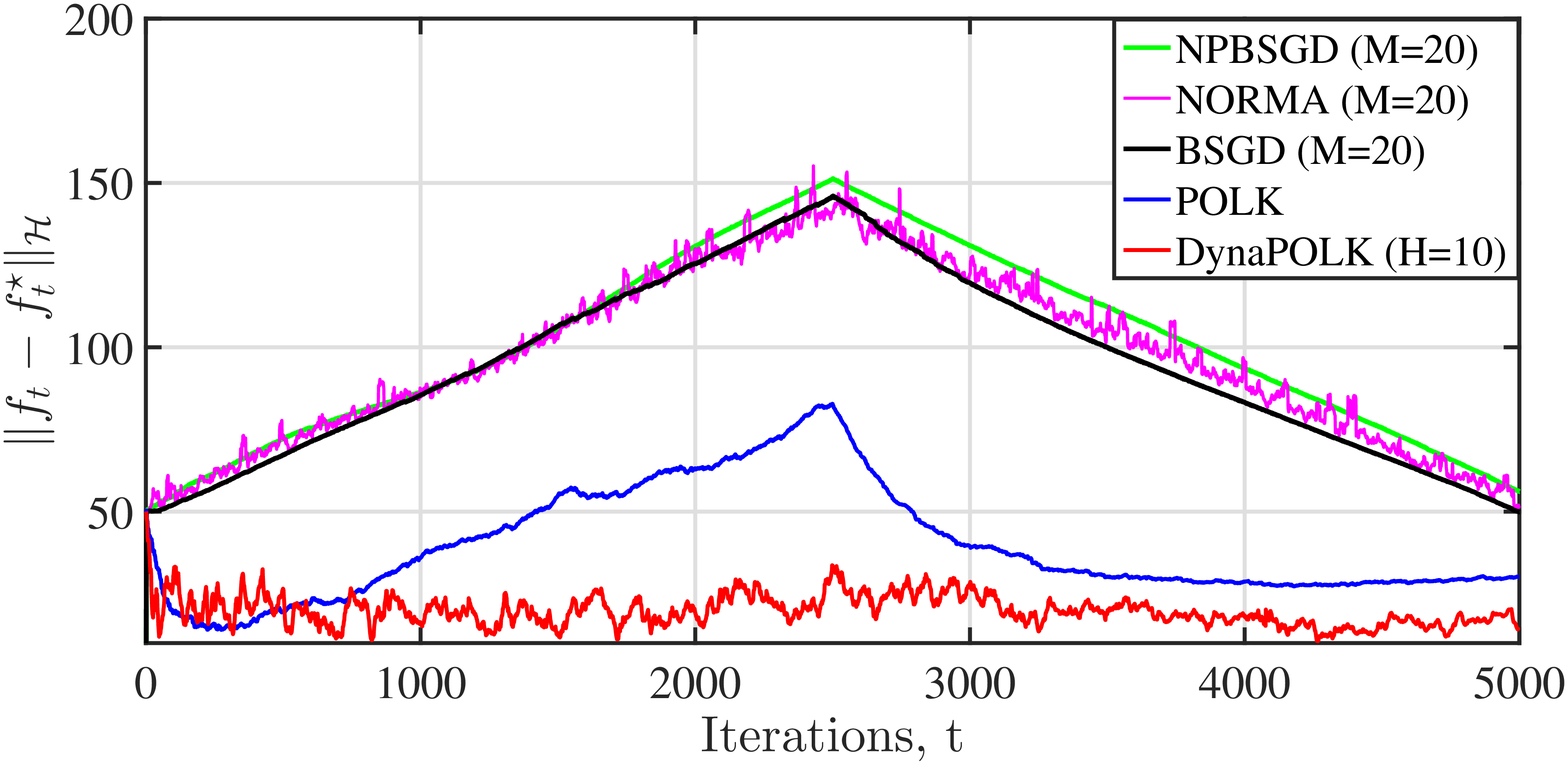}\label{comparisons}}\caption{Experiments with non-stationary nonlinear regression common to phase retrieval: scalar targets are $y_t=a_t\text{sin}(b_t\bbx_t+c_t)+\eta_t$, which one would like to predict via sequentially observed  $\bbx_t$, where $\eta_t$ is additive Gaussian noise. DynaPOLK attains sublinear regret, and is able to track a shifting nonlinearity with low model complexity. In contrast, alternatives are unable to adapt to drift.  \vspace{-0mm}}
		\end{figure*}
 From the strong convexity of the objective function (Assumption \ref{as:5}), we have \eqref{eq:strong_convexity}, which we may substitute in for the second term on right hand side of \eqref{eq:iterate_prop1_subst3} to obtain
 \begin{align}\label{eq:iterate_prop1_subst4}
 \| f_{t+1} - f^\star_t \|_{\ccalH}^2 
 	 &\leq\| f_t - f^\star_t \|_{\ccalH}^2
 	 - 2 \eta\mu \| f_t - f^\star_t \|_{\ccalH}^2\\
 	  		 &\quad + 2 \eps \| f_t - f^\star_t \|_{\ccalH} 
 	+ 2\epsilon^2+2\eta^2L^2\|f_t-f_t^\star\|_{\mathcal{H}}^2. \nonumber\\
 	&\leq\!(1\!-\!2\eta\mu\!+\!2\eta^2L^2)\| f_t \!-\! f^\star_t \|_{\ccalH}^2\!+\!\epsilon\frac{8CX}{\lambda}\!+\!2\epsilon^2\nonumber
 \end{align}
 where for the second inequality we have used the statement of Proposition \eqref{prop_bounded} for the third term on the right-hand side of the first inequality. 
  Take square root on both sides of \eqref{eq:iterate_prop1_subst4}, we get
 \begin{align}\label{eq:iterate_prop1_subst7}
 \| f_{t+1} - f^\star_t \|_{\ccalH} 
 	&\leq\rho\| f_t - f^\star_t \|_{\ccalH}+\sqrt{\epsilon\frac{8CX}{\lambda}+2\epsilon^2},
 \end{align}
 where $\rho:=\sqrt{(1-2\eta(\mu-\eta L^2))}$. The value of $\rho\in (0,1)$ defines a contraction mapping provided $\eta$ satisfies $0<\eta<\frac{\mu}{L^2}$. With the help of triangle inequality, we can write the difference $\| f_{t+1} - f_{t+1}^\star \|_{\ccalH}$ as
 \begin{align}\label{last}
 \| f_{t+1} - f_{t+1}^\star \|_{\ccalH}\leq\| f_{t+1} - f_{t}^\star \|_{\ccalH}+\| f_{t+1}^\star - f_{t}^\star \|_{\ccalH}.
 \end{align}
 Utilize the upper bound in \eqref{eq:iterate_prop1_subst7} into \eqref{last}, and taking the summation over $t$ on the both sides, we get 
 %
 %
 \begin{align}\label{eq:iterate_prop1_subst10}
 \sum\limits_{t=1}^{T}\| f_{t} - f_{t}^\star \|_{\ccalH}\leq&\| f_{1} - f_{1}^\star \|_{\ccalH}+\rho\sum\limits_{t=1}^{T}\| f_t - f^\star_t \|_{\ccalH} 
 \\
 &+\!\!\sqrt{\epsilon T^2\frac{8CX}{\lambda}\!+\!2\epsilon^2T^2}\!+\!\sum\limits_{t=1}^{T}\| f_{t}^\star \!-\! f_{t-1}^\star \|_{\ccalH}.\nonumber
 \end{align}
 After rearranging and dividing the both sides by $1-\rho$, we get
 \begin{align}\label{eq:iterate_prop1_subst11}
 \sum\limits_{t=1}^{T}\| f_{t} \!\!-\!\! f_{t}^\star \|_{\ccalH}
 &\!\!\leq\!\!\frac{\| f_{1} \!\!-\!\! f_{1}^\star \|_{\ccalH}}{1-\rho}\!+\!\frac{1}{1\!\!-\!\!\rho}\left(\!\!\!\sqrt{\epsilon T^2\frac{8CX}{\lambda}\!+\!2\epsilon^2T^2}\!+\!W_T\!\!\right)\nonumber \\
 &\qquad \leq \ccalO\left(\frac{1 + {T \sqrt{\epsilon} } + W_T}{1-\rho} \right)
 \end{align}
 where we have used the  definition of path length $W_T$ \eqref{vtdy} on the right-hand side of \eqref{eq:iterate_prop1_subst10}. From the first order convexity condition, we can write 
 \begin{align}
 \!\!\sum\limits_{t=1}^{T}[L_t(f_t(\bbS_t)) \!\!-\!\!L_t(f_t^\star(\bbS_t))]\leq\!\!& \sum\limits_{t=1}^{T}\langle \nabla_f L_t(f_t(\bbS_t)), f_t\!-\!\!f_t^\star\rangle_{\mathcal{H}}\nonumber
 \\
 &\hspace{-2cm}\leq\sum\limits_{t=1}^{T}\|\nabla_f L_t(f_t(\bbS_t))\|_{\mathcal{H}} \|f_t-f_t^\star\|_{\mathcal{H}}\label{gradient}
 \end{align}
 where the second inequality in \eqref{gradient} holds sue to Cauchy-Schwartz inequality. Next, since the space $\mathcal{X}$ is compact, the gradient norm $\|\nabla L_t(f_t(\bbS_t))\|_{\mathcal{H}}$ evaluated for any $\bbS_t$ will be upped bounded by some finite constant $G$, which implies that $\|\nabla L_t(f_t(\bbS_t))\|_{\mathcal{H}}\leq G$. Using the gradient upper bound on the right hand side of \eqref{gradient}, we obtain
 \begin{align}
  \sum\limits_{t=1}^{T}[L_t(f_t(\bbS_t)) -L_t(f_t^\star(\bbS_t))]
  \leq & G\sum\limits_{t=1}^{T}\|f_t-f_t^\star\|_{\mathcal{H}}.\label{gradient2}
  \end{align}
  Next, utilizing the upper bound in \eqref{eq:iterate_prop1_subst11} into the right hand side of \eqref{gradient2}, we obtain the final regret result as
  \begin{align}\label{final}
    \textbf{Reg}_T^D= & \ccalO\left(\frac{1 + {T\sqrt{\epsilon}  }+ W_T}{1-\rho} \right).
    \end{align}
    Observe that \eqref{final} is sublinear in $T$ up to terms depending on the path length for any step-size $\eta$ and for compression constant \red{$\epsilon=\mathcal{O}\left(T^{-\alpha}\right)$ with $\alpha\in(0,\infty]$}.
 The expression in \eqref{eq:iterate_prop1_subst11} is similar to the one on \eqref{eq:dynamic_path_thm} except for the term $(1-\rho)$ in the denominator. If we choose $\eta$ such that $(1-\rho)>\eta$, the results for strongly convex functions is improved. Rearrange this expression to obtain
                 $$ (1 - \eta)^2 >\rho^2 = 1 - 2 \eta (\mu - \eta L^2)$$
                 which, upon solving for a condition on $\eta$, simplifies to 
                 $$\eta < \frac{2 (\mu - 1)}{2 L^2 -1}.$$\hfill $\qed$
                 
                 %
                 %
%
%
Theorem \ref{theorem:dynamic_path} generalizes existing dynamic regret bounds of \cite{Zinkevich2003,hall2015online,besbes,mokhtari2016online} to the case where decisions are defined by functions $f_t$ belonging to RKHS $\mathcal{H}$. To facilitate this generalization, gradient projections are employed to control function complexity, which appears as an additional term depending on compression budget $\epsilon$ in the dynamic regret bounds, in particular, the product $T\sqrt{\epsilon}$ in the expressions \eqref{eq:dynamic_path_thm} and \eqref{eq:dynamic_path_thm_strongcvx}. For smaller $\epsilon$, the regret is smaller, but the model complexity increases, and vice versa. Overall, this compression induced error in the gradient is a version of inexact functional gradient descent algorithm with a tunable tradeoff between convergence accuracy and memory. Note that for $\epsilon=0$, these results becomes of the order of $\mathcal{O}{(1+W_T)}$ which matches \cite{mokhtari2016online} and improves upon existing results \cite{Zinkevich2003,hall2015online,besbes}. Even for the strongly convex case  with $\epsilon=0$, we obtain $o(1+W_T)$ which is better than its parametric counterpart obtained in \cite{mokhtari2016online}. 

Regarding the complexity reduction technique for kernel methods, we note that dynamic regret bounds for random feature approximations in the looser sense of \eqref{dynamic_regret_vT22} have been recently established \cite{shen2019random}. These results hinge upon tuning the random feature incurred error to gradient bias. However, in practice, the number of random features required to ensure a specific directional bias is unknown, which experimentally dictates one using a large enough number of random features to hope the bias is small. However, this error is in the \emph{function representation} itself, not the gradient direction. This issue could be mitigated through double kernel sampling \cite{dai2014scalable}, a technique whose use in non-stationary settings remains a direction for future research.

{\bf Parameter Selection} For step-size \red{\red{$\eta<\min\{\frac{1}{\lambda},\frac{1}{L}\}$} and compression budget $\eps=\mathcal{O}(T^{-\alpha})$, substituted into Lemma \ref{theorem_model_order} yields model complexity $   M= \mathcal{O}(T^{\alpha p})$. To obtain sublinear regret (up to factors depending on $W_T$) \emph{and} model complexity in the non-strongly convex case, we require \red{$\alpha\in(0,\frac{1}{p}]$} and $\alpha p\in(0,1)$, which holds, for instance, if  $\eps=T^{-1/(p+1)}$.} {Note that the dynamic regret result in \eqref{eq:dynamic_path_thm} and the model order, using Lemma \ref{theorem_model_order}, becomes  
                 \begin{align}\label{last_11111}
                  \textbf{Reg}_T^D=\mathcal{O}\left(1+T^{{(1-\frac{\alpha}{2})}}+W_T\right)\;, \quad
                                 M=  \mathcal{O}(T^{\alpha p}).
                                 \end{align}
                                 
                                 For the regret to be sublinear, we need  $\red{{\alpha}\in(0,\frac{1}{p}]}$. As long as the dimension  $p$ is not too large, we always have a range for $\alpha$. This implies that $\red{\alpha p\in(0,1)}$ and hence $M$ is sublinear.

                                   \begin{table}[t]
                                                 	\centering
                                                 	\resizebox{\columnwidth}{!}{\begin{tabular}{|c|c|c|c|}
                                                 	                                                 		\hline
                                                 	                                                 		$\alpha$& Regret & M & Comments	 \\
                                                 	                                                 		\hline
                                                 	                                                 		 \red{$\alpha=0$}& $\mathcal{O}(T)+W_T$ & $\mathcal{O}(1)$                  	& \text{Linear regret} \\
                                                 	                                                 		\hline
                                                 	                                                 		$\red{{\alpha}=\frac{1}{p}}$& $\mathcal{O}\left(T^{\frac{(2p-1)}{2p}}+W_T\right)$ & $\mathcal{O}(T)$                  	& \text{Linear } $M$ \\
                                                 	                                                 		       		\hline
                                                 	                                                		\red{$\alpha=\frac{1}{p+1}$}& $\mathcal{O}\left(T^{\frac{2p+1}{2p+2}}+W_T\right)$ & $\mathcal{O}(T^{p/(1+p)})$                  	& \text{Sublinear } $M$ \\
                                                 	                                                 		                		                		                		\hline
                                                 	                                                 	\end{tabular}}\vspace{2mm}
                                                 	
                                                 	\caption{Summary of dynamic regret rates for convex loss function. {Note that the same rates are obtained for the strongly convex loss function but $\mathcal{O}$ is replaced by small $o$.}}
                                                 	\label{tab:my_label21}
                                                 \end{table}}

 Observe that the rate for the strongly convex case \eqref{eq:dynamic_path_thm_strongcvx} is strictly better the non-strongly convex counterpart \eqref{eq:dynamic_path_thm_strongcvx} whenever  $\eta$ satisfies $(1-\rho)>\eta$. This holds, provided 
 %
                 $\eta < ({2 (\mu - 1)})/({2 L^2 -1})$.
                 %
                 Taken together, Theorems \ref{theorem:dynamic_cost} - \ref{theorem:dynamic_path} establish that Algorithm \ref{alg:soldd} is effective for non-stationary learning problems. In the next section, we experimentally  benchmark these results on representative tasks. 

	\begin{figure*}
 		\centering \vspace{-2mm}\hspace{-7mm} 
		\subfigure[No. of misclassifications (MC)]{\includegraphics[width=.36\linewidth,height=4cm]{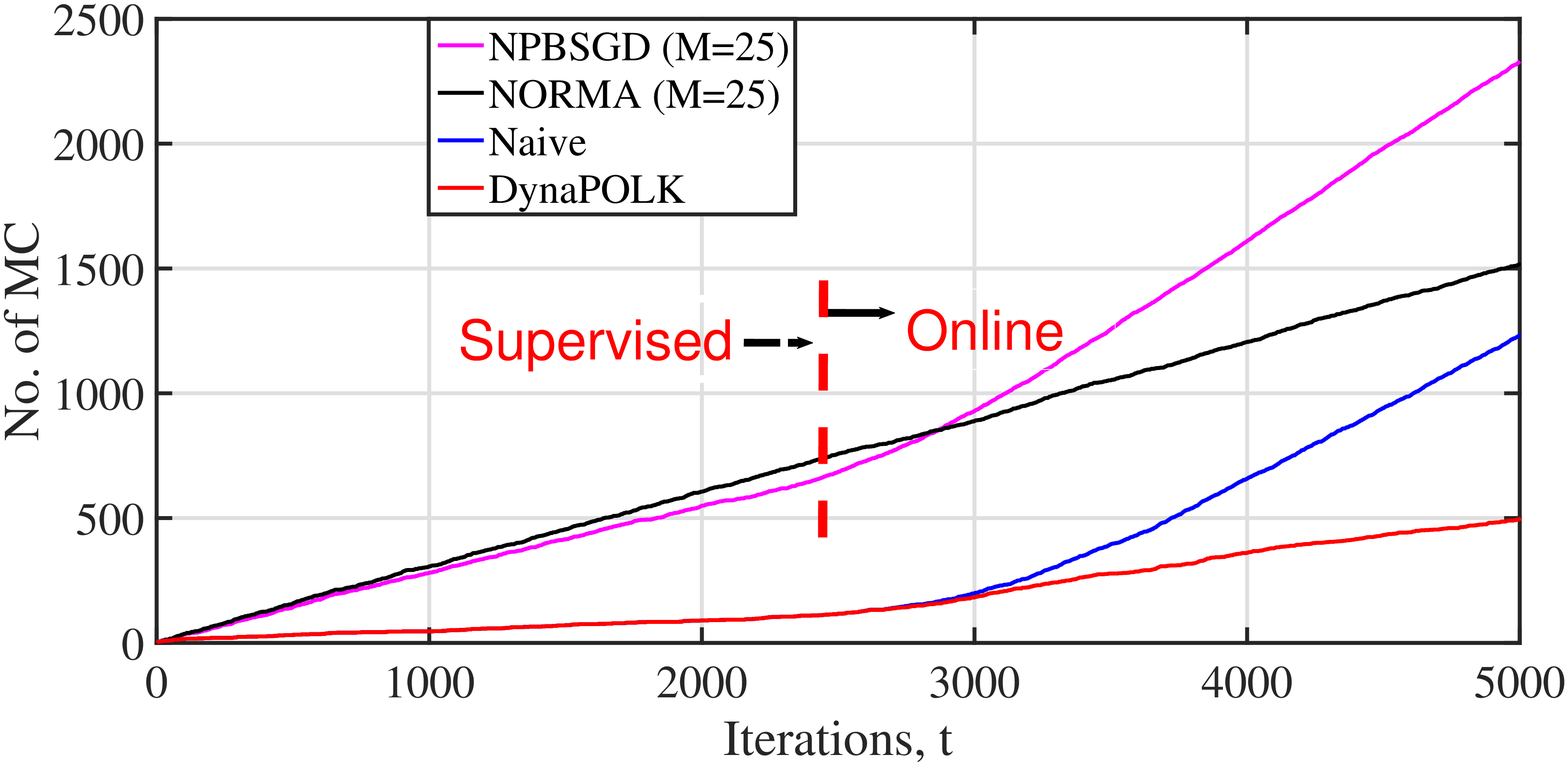}\label{classification1} \hspace{-7mm}}
 		\subfigure[Model order $M_t$]{\includegraphics[width=.36\linewidth,height=4cm]{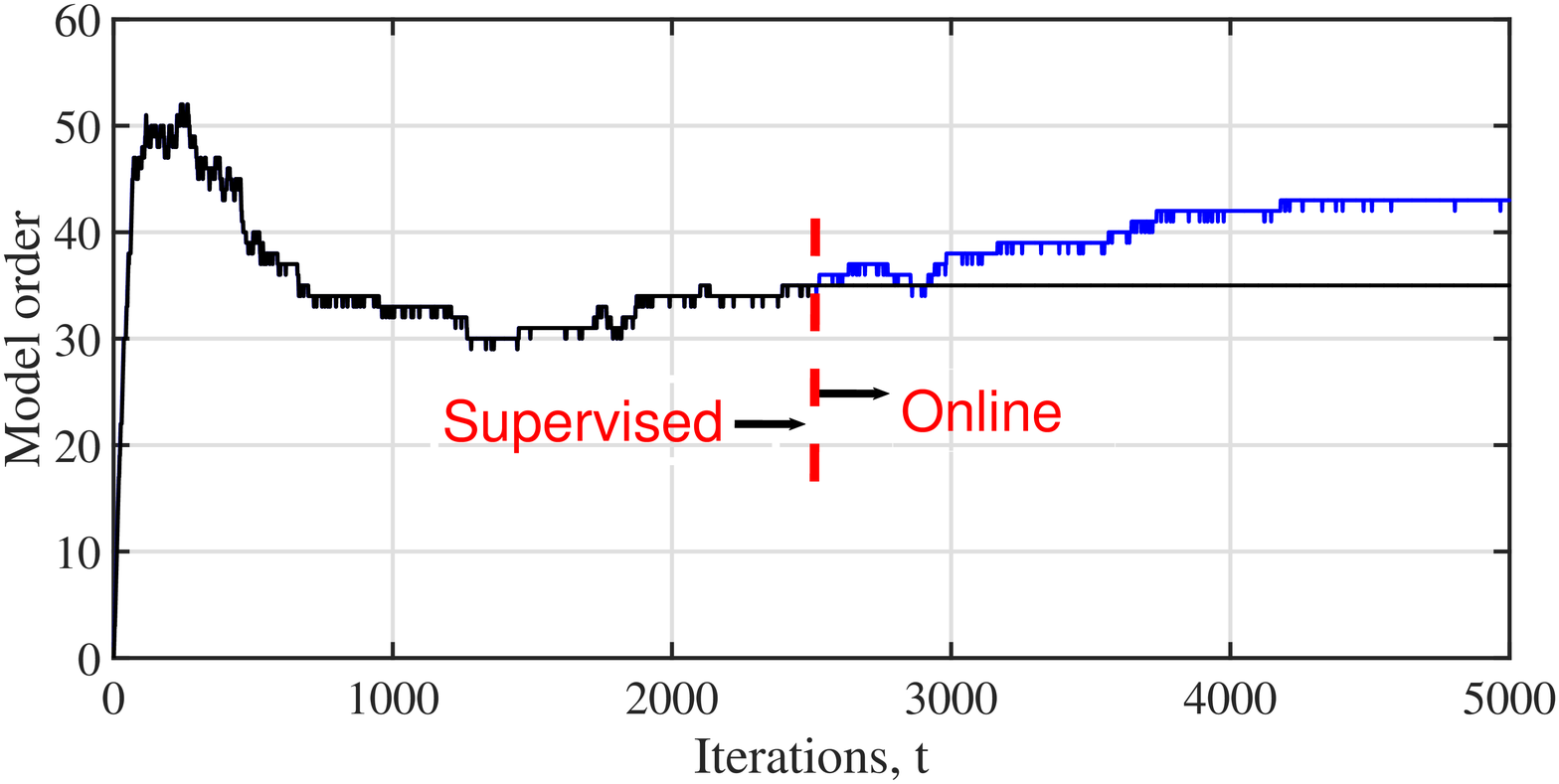}\label{classification2}\hspace{-7mm}}	
 		\subfigure[MSE for Classification problem]{\includegraphics[width=.36\linewidth,height=4cm]{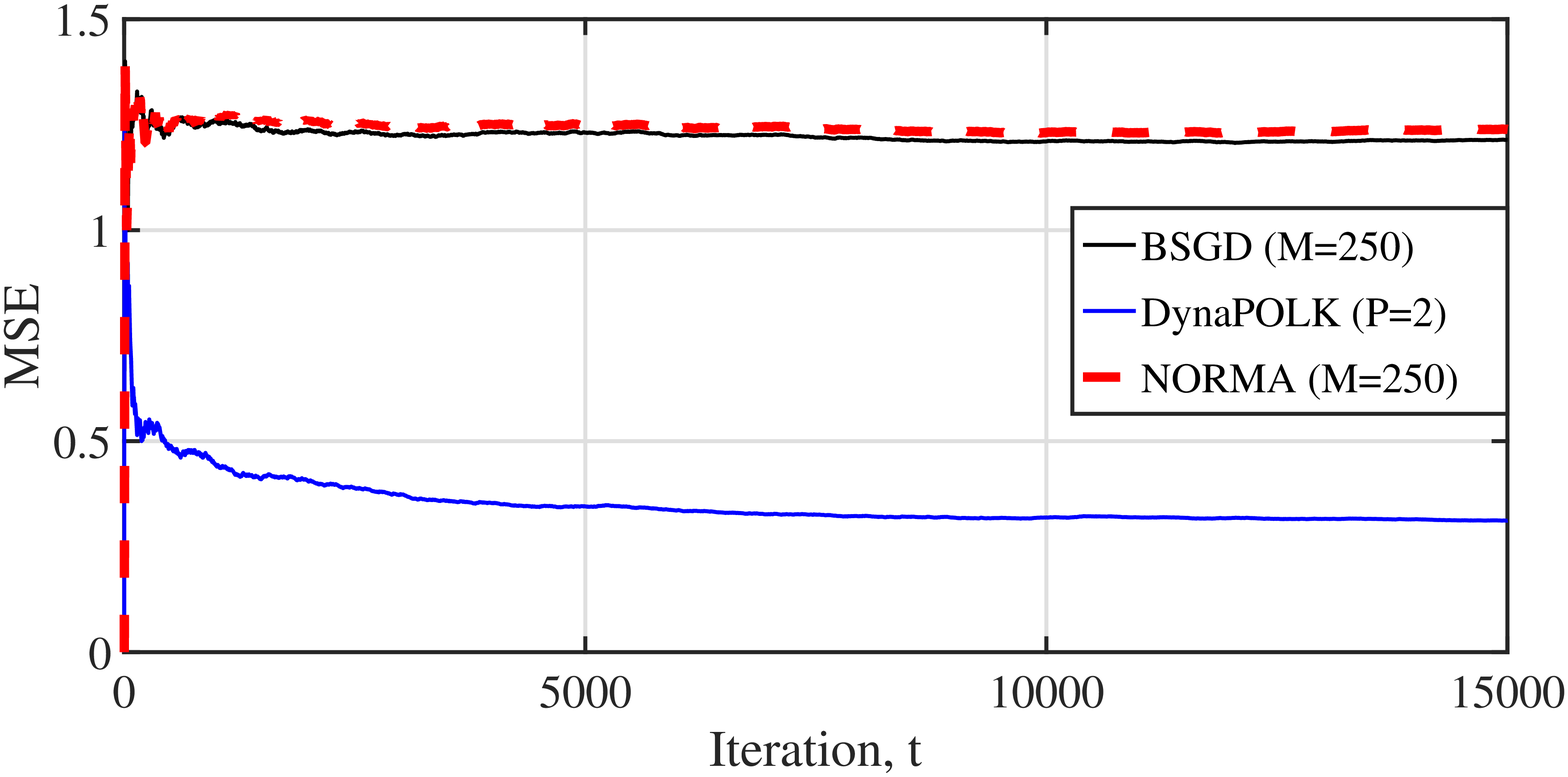}
		\label{mse_classification}}
%
%
%
 		\caption{Comparison of DynaPOLK to other kernel methods (left) for an online non-stationary classification on Gaussian Mixtures data \cite{Zhu2005} with dynamic class means. Alternative methods experience nearly linear regret, and their mean-square error on the time-series classification problem defined in \cite{street2001streaming} is relatively uncontrolled (right). 
 		}
 		\label{classification}	
 	\end{figure*}

\section{Experiments} \label{sec:experiments}
In this section, we evaluate the ability of Algorithm \ref{alg:soldd} to address online regression and classification in non-stationary regimes and compare it with some alternatives.

{\bf Online Regression} We first consider a simple online regression to illustrate performance: target variables are of the form $y_t=a_t\text{sin}(b_t\bbx_t+c_t)+\eta_t$, which one would like to predict upon the basis of sequentially observed values of $\bbx_t$. Here $\eta\sim\mathcal{N}(\mu_t,\sigma^2)$ is Gaussian noise. Such models arise in phase retrieval, as in medical imaging, acoustics, or communications. Non-stationarity comes from parameters $(a_t,b_t,c_t)$ changing with $t$: $a_t$ and $c_t$ increase from $0$ to $3$ and then decrease to $1$, both linearly, while $b_t$ is increased from $0$ to $1$ linearly.
We consider a square loss function given by $\ell_t(f(\bbx))=(f(\bbx)-y_t)^2$ and run the simulations for $T=5000$ iterations. For experiments, we select Gaussian kernels of bandwidth $\sigma=0.252$, step-size $\eta={T^{-0.4}}$, and compression parameter $\epsilon={T^{-0.1}}$. The dynamic regret for $H=1$ is shown in Fig. \ref{regression1} -- observe that it grows sublinearly with time. Path length $W_T$ is shown for reference.
 Fig. \ref{regression2} shows the model order relative to time for window lengths $H=1$ and $H=10$, which remains moderate.  Observe that Algorithm \ref{alg:soldd} is able to track shifting data more gracefully with larger $H$ as clear from Fig. \ref{classification111}. {This figure shows the true function at the first and last time, i.e., $f_1(x)$ at iteration $1$ to $f_T(x)$ at iteration $T$. The red curve shows the learned function via DynaPOLK, which better adheres to the target for $H=10$. An animation of online nonlinear regression in the presence of non-stationarity is appended to this submission.} and the supplementary regression video. We further compare DynaPOLK against the alternative methods, namely, NPBSGD \cite{le2016nonparametric}, NORMA \cite{Kivinen2004}, BSGD \cite{wang2012breaking}, and POLK \cite{koppel2019parsimonious}. We plot the distance from the optimal $\|f_t-f_t^\star\|_{\mathcal{H}}$ in Fig. \ref{comparisons}. Fig. \ref{comparisons} we observe DynaPOLK with $H=10$ is able to track the time-varying nonlinearity, whereas the others experience nearly linear regret during the non-stationary phase. {We remark that a recent algorithm AdaRaker is proposed in \cite{shen2019random} to solve the nonparametric online learning problems. The authors in \cite{shen2019random} shows that AdaRaker performs better than all the other available techniques in the literature. Hence, in this work, we compare the proposed DynaPOLK algorithm mainly with the algorithms of \cite{shen2019random} and show the improvement as provided in Table \ref{comparisos_literature} (see \cite{shen2019random} for the datasets description). }
 %
 \begin{table}[t]
 \centering
\resizebox{\columnwidth}{!}{ \begin{tabular}{|l|l|l|l|l|}
 \hline
 \textbf{Algorithms/Dataset} & \textbf{Twitter} & \textbf{Tom} & \textbf{Energy} & \textbf{Air} \\ \hline
 AdaRaker                    & 2.6              & 1.9          & 13.8            & 1.3          \\ \hline
 \textbf{DyanPOLK}           & \textbf{0.06}                 & \textbf{0.68}         &    \textbf{0.0052}             &   \textbf{0.14}           \\ \hline
 Model order (DyanPOLK)           & 50                & 24         &    31             &   33           \\ \hline
 \end{tabular}}
 \caption{MSE ($10^{-3}$) performance of the different algorithms with $B = D = 50$ (as in \cite{shen2019random}). }
 \label{comparisos_literature}
 \end{table}

	\begin{figure*}
 		\centering \vspace{-2mm}\hspace{-7mm} 
		\subfigure[Initial $and$ final nonlinearity]{\includegraphics[width=.35\linewidth,height=4cm]{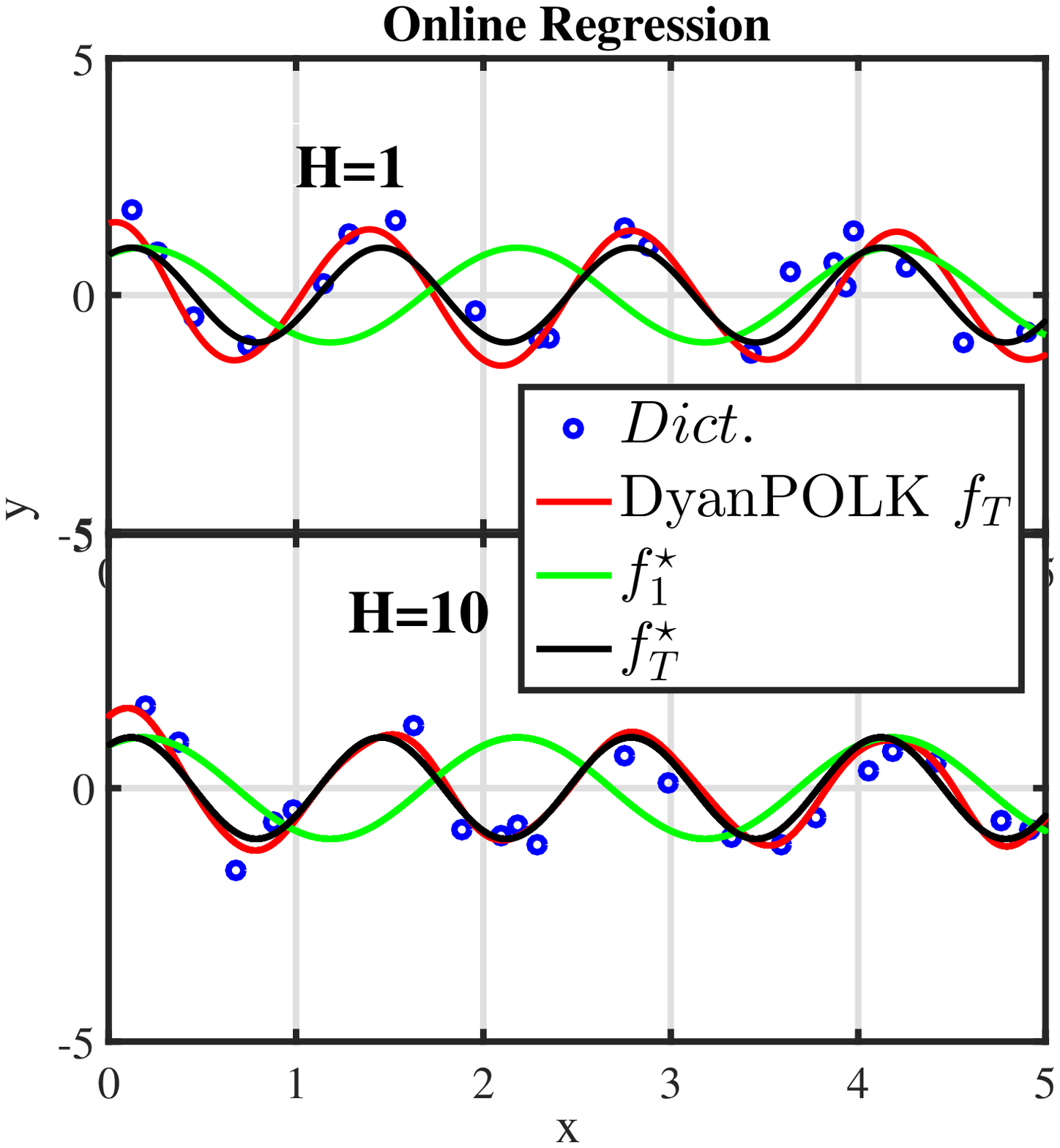}\label{classification111} \hspace{-1.5mm}}
 		\subfigure[Stationary classifier at $t=1000$]{\includegraphics[width=.35\linewidth,height=4cm]{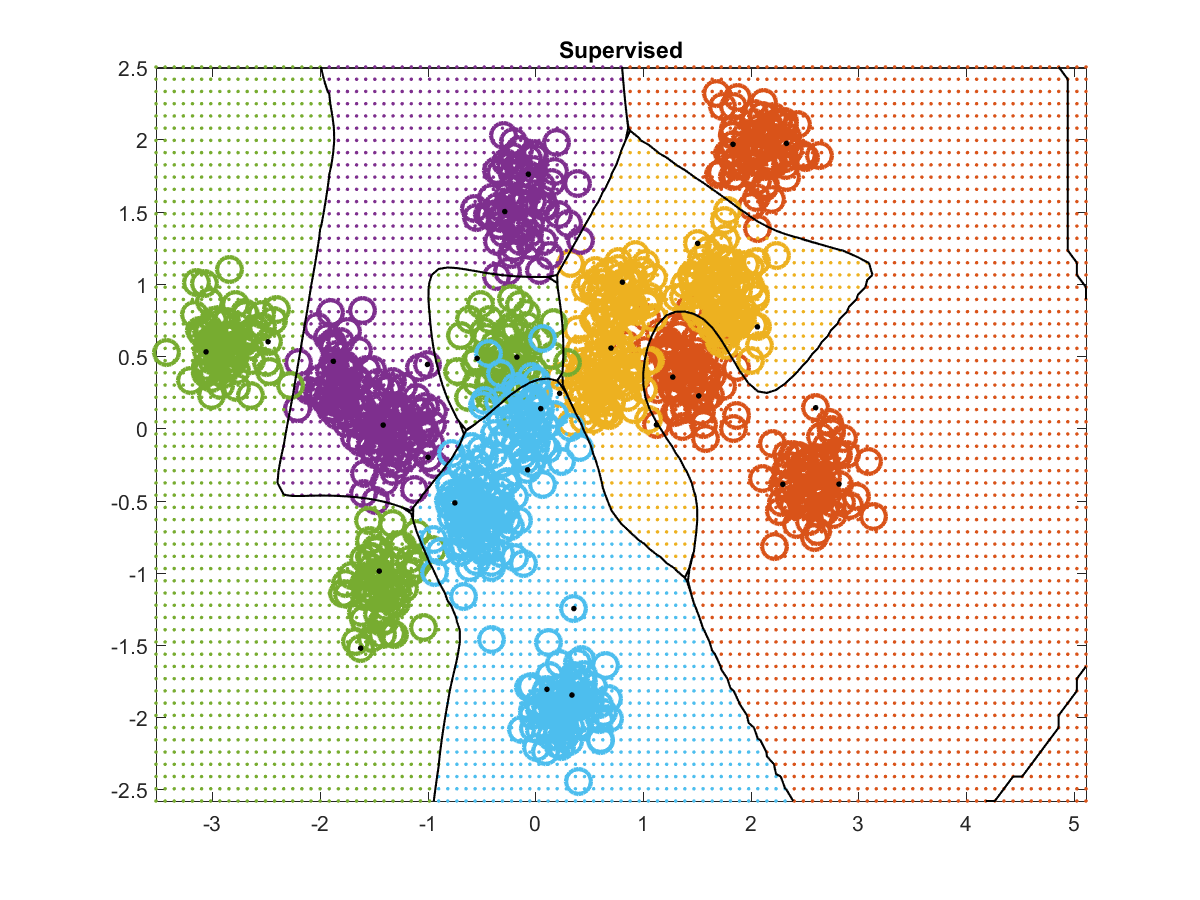}\label{classification112}\hspace{-5mm}}	
 		\subfigure[Drifted classifier at $t=5000$]{\includegraphics[width=.35\linewidth,height=4cm]{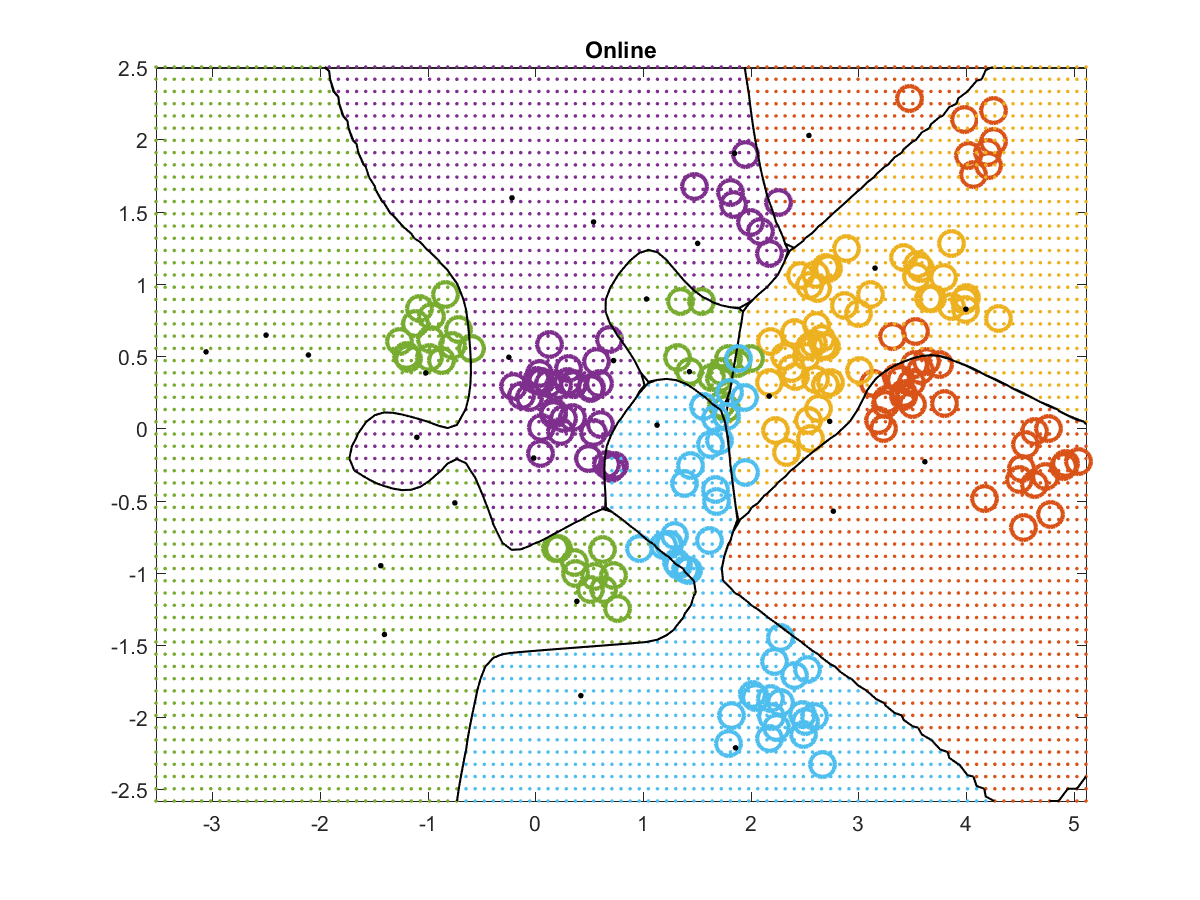}\hspace{-6mm}\label{classification113}}
 		\caption{Left: regression with initial $\&$ final target denoted as $f_1^\star \  \& \ f_T^\star$. DynaPOLK tracks nonlinearity drifting with $(a_t,b_t,c_t)$. Windowing ($H=10$) improves performance. Center: decision surface of DynaPOLK on stationary Gaussian Mixtures \cite{Zhu2005}. Right: classifier adapting to data drift.\vspace{0mm} }
 		\label{classification12}	
 	\end{figure*}
{\bf Online Classification}
Consider the multi-class classification in non-stationary environments, a salient problem in terrain adaption of autonomous systems \cite{sun2010learning}. 
%
%
 Motivated by this setting, we experiment on multi-class problems with label drift. Specifically, data is stationary for the first $2500$ iterations during which it reduces to standard supervised learning. After the first $2500$ iterations, the data drifts and we require learning the classifier online.
 We fix the loss as  the multi-class hinge (SVM) loss $\ell_t(f(\bbx))$ as in \cite{Murphy2012}, and generate Gaussian Mixtures data akin to \cite{Zhu2005}.  The synthetic Gaussian Mixtures dataset for classification is generated in a manner similar to \cite{Zhu2005}. It consists of $N=5000$ feature-label pairs out of which last $2500$ are generated with the drift. For the first $2500$ points, we generate $\bbx_n \in \reals^p$ as $\bbx \given y \; \sim \; (1/3) \sum_{j=1}^3 \ccalN(\boldsymbol{\mu}_{y,j}, \sigma^2_{y,j}\bbI)$ where $\sigma^2_{y,j}=0.2$ for all values of $y$ and $j$, where also depends upon the class as $\boldsymbol{\mu}_{y,j} \sim \ccalN( \boldsymbol{\theta}_y, \sigma^2_y\bbI )$. The class mean value $\{ \bbtheta_i\}_{i=1}^C$ is placed around unit circle. We fix $\sigma_y^2=1.0$ and $C=5$. To add drift, after first $2500$ points, we shift each point to the right by $0.1$ at each instant which is clear from the video attached with the submission.
  Moreover, we focus on SVM for ease of interpretation. Its definition the multi-class context is given as
  \begin{align}
  \ell_t(\!f\!, \bbx_t, y_t\!) \!\!=\!\! \max(0, 1+f_r(\bbx_t)-f_{y_t}
  (\bbx_t)) +\lambda\sum\limits_{c'=1}^{C}\|f_{c'}\|_{\mathcal{H}}^2,\nonumber
  \end{align} 
  where $r=\arg\max_{c'\neq y_t}f_{c'}(\bbx)$.
  This definition is taken exactly from \cite{Murphy2012}.
  With dynamic class means during the drift phase: each mean shifts rightward by $0.1$ per step. 
 The results are presented in Fig.\ref{classification}: misclassifications over time is shown in  Fig.\ref{classification1}. DynaPOLK yields fewer mistakes in the non-stationary regime. Model complexity (Fig.\ref{classification2}) increases when the data is non-stationary, suggesting that it may be effective for change point detection. Fig. \ref{classification112} displays the learned decision surface on stationary data, and Fig. \ref{classification113} shows evolution to rightward-drifted data. Black dots denote dictionary elements and black lines are decision boundaries -- the supplementary classification video visualizes the classifier evolution.
 %
%
  As all the class means shift rightward, DynaPOLK is able to stably and accurately adapt its model. 
 
%
%
  	%
  	%
  	 	Further, we did an additional experiments on time-series classification \cite{street2001streaming} .  This dataset consists of $60000$ examples with $3$ features and $3$ classes. Features take values between $0$ and $10$, and the data is broken up into four blocks, where values of the features shift across the different blocks. See  \cite{street2001streaming}[Table 1] for more specific details. We report the results of comparing DynaPOLK to the alternatives mentioned in Sec. \ref{sec:experiments} in Figure \ref{mse_classification}. Specifically, we display the mean-square error, i.e., for each time, we compute misclassification square error and average it to the previous one. Note that DynaPOLK attains favorable performance.
 		
%
%
  	
 %

\section{Conclusion}\label{sec:conclusion}
In this work, we focused on non-stationary learning, for which we proposed an online universal function approximator based on compressed kernel methods. We characterized its dynamic regret as well as its model efficiency, and experimentally observed it yields a favorable  tradeoffs for learning in the presence of non-stationarity. Future questions involve the development of model order as use for change point detection, improving the learning rates through second-derivative information, variance reduction, or strong convexity, and coupling it to the design of  learning control systems.


\appendices

 \section{Proof of Lemma \ref{theorem_model_order}}\label{proof_thm_4}
 
 
  Before proving Lemma \ref{theorem_model_order}, we present a lemma which allows us to relate the stopping criterion of our sparsification procedure to a Hilbert subspace distance.
 %
 \begin{lemma}\label{lemma_subspace_dist}
 Define the distance of an arbitrary feature vector $\bbx$ evaluated by the feature transformation $\phi(\bbx) = \kappa(\bbx, \cdot)$ to $\ccalH_{\bbD}=\text{span}\{\kappa(\bbd_t, \cdot) \}_{t=1}^M$, the subspace of the Hilbert space spanned by a dictionary $\bbD$ of size $M$,  as
 \begin{align}\label{eq:hilbert_subspace_dist}
 \text{dist}( \kappa(\bbx, \cdot) , \ccalH_{\bbD}) 
 = \min_{f\in\ccalH_{\bbD}} \| \kappa(\bbx, \cdot) - \bbv^T \bbkappa_{\bbD}(\cdot) \|_{\ccalH} \; .
 \end{align}
 This set distance simplifies to following least-squares projection when $\bbD \in \reals^{p\times M}$ is fixed
 \begin{align}\label{eq:hilbert_subspace_dist_ls}
 \text{dist}( \kappa(\bbx, \cdot) , \ccalH_{\bbD}) 
 = \Big\|  \kappa(\bbx,\cdot) 
   - [\bbK_{\bbD, \bbD}^{-1} \bbkappa_{\bbD}(\bbx)]^T
    \bbkappa_{\bbD}(\cdot) \Big\|_{\ccalH} \; .
 \end{align}
 \end{lemma}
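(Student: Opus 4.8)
The plan is to recognize \eqref{eq:hilbert_subspace_dist} as the orthogonal projection of the feature map $\kappa(\bbx,\cdot)$ onto the finite-dimensional subspace $\ccalH_{\bbD}$, and to collapse the infinite-dimensional Hilbert-space minimization into a finite-dimensional least-squares problem over the coefficient vector $\bbv\in\reals^M$ by invoking the reproducing property \eqref{eq:rkhs_def}. Since $\ccalH_{\bbD}$ is spanned by the $M$ atoms $\{\kappa(\bbd_t,\cdot)\}$, every $f\in\ccalH_{\bbD}$ has the form $f=\bbv^T\boldsymbol{\kappa}_{\bbD}(\cdot)$, so the objective is a quadratic in $\bbv$ alone and the minimization is exact.

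First I would expand the squared objective $g(\bbv)=\|\kappa(\bbx,\cdot)-\bbv^T\boldsymbol{\kappa}_{\bbD}(\cdot)\|_{\ccalH}^2$ using bilinearity of the inner product together with the reproducing identities $\langle\kappa(\bbx,\cdot),\kappa(\bbx,\cdot)\rangle_{\ccalH}=\kappa(\bbx,\bbx)$, $\langle\kappa(\bbx,\cdot),\kappa(\bbd_t,\cdot)\rangle_{\ccalH}=\kappa(\bbd_t,\bbx)$, and $\langle\kappa(\bbd_s,\cdot),\kappa(\bbd_t,\cdot)\rangle_{\ccalH}=[\bbK_{\bbD,\bbD}]_{s,t}$, which yields
\begin{align}
g(\bbv)=\kappa(\bbx,\bbx)-2\bbv^T\boldsymbol{\kappa}_{\bbD}(\bbx)+\bbv^T\bbK_{\bbD,\bbD}\bbv.
\end{align}
This is a convex quadratic with Hessian $2\bbK_{\bbD,\bbD}\succeq 0$ (a Gram matrix), so the first-order condition is both necessary and sufficient. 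Setting $\nabla_\bbv g(\bbv)=-2\boldsymbol{\kappa}_{\bbD}(\bbx)+2\bbK_{\bbD,\bbD}\bbv=0$ gives the normal equations $\bbK_{\bbD,\bbD}\bbv^\star=\boldsymbol{\kappa}_{\bbD}(\bbx)$, hence $\bbv^\star=\bbK_{\bbD,\bbD}^{-1}\boldsymbol{\kappa}_{\bbD}(\bbx)$. Substituting $\bbv^\star$ back into $\bbv^T\boldsymbol{\kappa}_{\bbD}(\cdot)$ produces the minimizing element $[\bbK_{\bbD,\bbD}^{-1}\boldsymbol{\kappa}_{\bbD}(\bbx)]^T\boldsymbol{\kappa}_{\bbD}(\cdot)$, which is precisely the argument of the norm in \eqref{eq:hilbert_subspace_dist_ls}, establishing the claim.

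Equivalently, one could bypass the explicit quadratic by appealing to the Hilbert projection theorem: since $\ccalH_{\bbD}$ is finite-dimensional and therefore closed, the minimizer exists and is unique and is characterized by orthogonality, $\langle\kappa(\bbx,\cdot)-(\bbv^\star)^T\boldsymbol{\kappa}_{\bbD}(\cdot),\kappa(\bbd_s,\cdot)\rangle_{\ccalH}=0$ for each $s$; applying the reproducing property to these $M$ equations recovers the same normal equations. The only point requiring care is the invertibility of $\bbK_{\bbD,\bbD}$, which \eqref{eq:hilbert_subspace_dist_ls} presupposes: this holds exactly when the atoms $\{\kappa(\bbd_t,\cdot)\}$ are linearly independent in $\ccalH$, equivalently $\bbK_{\bbD,\bbD}\succ 0$, a condition the KOMP sparsification in Algorithm \ref{alg:soldd} maintains by construction. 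Consequently there is no genuine obstacle here; the argument is a self-contained finite-dimensional computation, and its value is purely that it converts the abstract set distance into the closed-form least-squares residual used throughout the model-order analysis.
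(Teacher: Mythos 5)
Your proof is correct and follows essentially the same route as the paper: the paper likewise reduces the Hilbert-space minimization to a least-squares problem over $\bbv\in\reals^M$ and plugs in the minimizer $\bbv^\star=\bbK_{\bbD,\bbD}^{-1}\bbkappa_{\bbD}(\bbx)$ (obtained by the same logic as its coefficient update \eqref{eq:hatparam_update}). You merely make explicit what the paper leaves implicit — the quadratic expansion, the normal equations, and the invertibility of $\bbK_{\bbD,\bbD}$ — which is a welcome but not substantively different elaboration.
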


 
 
 \begin{proof}
 The distance to the subspace $\ccalH_{\bbD}$ is defined as
 \begin{align}\label{eq:subspace_dist}
 \text{dist}( \kappa(\bbx, \cdot) , \ccalH_{\bbD_t}) 
 =& \min_{f\in\ccalH_{\bbD}} \| \kappa(\bbx, \cdot) - \bbv^T \bbkappa_{\bbD}(\cdot) \|_{\ccalH}\nonumber\\ 
 =& \min_{\bbv\in \reals^{M}} \| \kappa(\bbx, \cdot) - \bbv^T \bbkappa_{\bbD}(\cdot) \|_{\ccalH} \; ,
 \end{align}
 where the first equality comes from the fact that the dictionary $\bbD$ is fixed, so $\bbv\in \reals^M$ is the only free parameter. Now plug in the minimizing weight vector $\tbv^\star=\bbK_{\bbD_t, \bbD_t}^{-1}\bbkappa_{\bbD_t}(\bbx_t)$ into \eqref{eq:subspace_dist} which is obtained in an analogous manner to the logic which yields \eqref{eq:hatparam_update}. Doing so simplifies \eqref{eq:subspace_dist} to the following
 \begin{align}\label{eq:subspace_dist2}
 \text{dist}(\kappa(\bbx_t, \cdot) , \ccalH_{\bbD_t}) \!=\!  \Big\|  \kappa(\bbx_t,\cdot) 
   - [\bbK_{\bbD_t, \bbD_t}^{-1} \bbkappa_{\bbD_t}(\bbx_t)]^T
    \bbkappa_{\bbD_t}(\cdot) \Big\|_{\ccalH} \; .
 \end{align}
 \end{proof}
 \subsection{Proof of Lemma \ref{theorem_model_order}}\label{apx_theorem_model_order}
 %
 The proof is similar to that of \cite[Theorem 3]{koppel2019parsimonious} and provided here in detail for completeness. Consider the model order of the function iterates $f_t$ and $f_{t+1}$ generated by Algorithm \ref{alg:soldd} denoted by $M_t$ and $M_{t+1}$, respectively, at two arbitrary subsequent times $t$ and $t+1$. The number of elements in $\bbD_t$ are $M_t=(t-1)$. After performing the algorithm update at $t$, we add a new data points to the dictionary and increase the model order by one, hence $M_{t+1}=M_t+1$.
 %
 
   Begin by assuming function ${f}_{t+1}$ is parameterized by dictionary $\bbD_{t+1}$ and weight vector $\bbw_{t+1}$. Moreover, we denote columns of $\bbD_{t+1}$ as $\bbd_t$ for $t=1,\dots,{M}_{t+1}$.
 %
 %
 Suppose the model order of the function $f_{t+1}$ is less than or equal to that of $f_t$, i.e. $M_{t+1} \leq M_t$. This relation holds when the stopping criterion of KOMP, stated as $\min_{\{\{j=1,\dots,{M_t + 1}\}\}} \gamma_j > \eps$, \emph{is not} satisfied for the kernel dictionary matrix with the newest data point $\bbx_t$ appended: $\tbD_{t+1} = [\bbD_t ; \bbx_t ]$ [cf. \eqref{eq:param_tilde}], which is of size $M_t + 1$. 
 Thus, the negation of the termination condition of KOMP holds for this case, stated as
 \begin{align}\label{eq:komp_to_terminate}
 \min_{\{j=1,\dots,{M_t + 1}\}} \gamma_j \leq \eps \; .
 \end{align}
 Observe that the left-hand side of \eqref{eq:komp_to_terminate} lower bounds the approximation error $\gamma_{M_t + 1}$ of removing the recent batch of the feature vectors $\bbS_t$ due to the minimization over $j$, that is, $\min_{\{j=1,\dots,{M_t + 1}\}} \gamma_j \leq\gamma_{M_t + 1} $. Consequently, if $\gamma_{M_t + 1} \leq \eps$, then \eqref{eq:komp_to_terminate} holds and the model order does not grow. Thus it suffices to consider $\gamma_{M_t + 1}$.

 The definition of $\gamma_{M_t + 1}$ with the substitution of $\tilde{f}_{t+1}$ defined by \eqref{eq:param_tilde} allows us to write
 \begin{align}\label{eq:min_gamma_expand}
 \gamma_{M_t+1}
 			&=\!\!\!\min_{\bbu\in\reals^{{M_t}}} \Big\|(1\!-\!\eta\lambda){f}_t - \eta\nabla_f \check{L}_t (f_{t}(\bbS_t)) -\!\!\!\!\!\!\!\!\!\!\!\!\sum_{k \in \ccalI \setminus \{M_t + 1\}} \!\!\!\!\!\!\!\!\!\!\!\!u_k \kappa(\bbd_k, \cdot) \Big\|_{\ccalH} \nonumber
 			\\
 			&=\!\!\!\min_{\bbu\in\reals^{{M_t}}} \Big\|(1 - \eta\lambda) \!\!\!\!\!\! \!\!\!\sum_{k \in \ccalI \setminus \{M_t + 1\}} \!\!  \!\!\!\!\!w_k \kappa(\bbd_k, \cdot)- \eta\nabla_f \check{L}_t (f_{t}(\bbS_t))   \nonumber
 			\\
 			&\quad\quad\quad\quad\quad\quad\quad\quad\quad-\!\!\!\!\!\!\!\sum_{k \in \ccalI \setminus \{M_t + 1\}} \!\!\! \!\!\! u_k \kappa(\bbd_k, \cdot)\Big\|_{\ccalH}. \; 
 \end{align}
  The minimal error is achieved by considering the square of the expression inside the minimization and expand to get 
 \begin{align}\label{eq:error_expansion}
  \Big\|&(1\!\!- \!\!\eta\lambda) \!\!\! \!\!\!\!\!\!\!\!\sum_{k \in \ccalI \setminus \{M_t + 1\}} \!\!\!\!\!\!\!\!\!\!\!\!  w_k \kappa(\bbd_k, \cdot)- \eta\nabla_f \check{L}_t (f_{t}(\bbS_t)) -\!\!\!\!\!\!\!\!\!\!\!\! \sum_{k \in \ccalI \setminus \{M_t + 1\}} \!\! \!\!\!\!\!\!\!\! u_k \kappa(\bbd_k, \cdot)\Big\|_{\ccalH}^2 \nonumber \\
  &\!\!\!\!\!\!= \!(1\!-\!\eta\lambda\!)^2 \bbw^T \bbK_{\bbD_t, \bbD_t} \bbw 
  \!\!+\!\! \eta^2 (\nabla_f \check{L}_t (f_{t}(\bbS_t)))^2
  \!\!+\!\! \bbu^T \bbK_{\bbD_t, \bbD_t} \bbu  \nonumber 
  \\
  &\quad\!-\! 2(1\!-\!\eta\lambda\!) \eta\nabla_f \check{L}_t (f_{t}(\bbS_t))  \bbw^T\bbkappa_{\bbD_t}\!(\bbx_\tau) 
  \!
  \\
  &\qquad+2\eta\nabla_f \check{L}_t (f_{t}(\bbS_t))  \bbu^T\bbkappa_{\bbD_t}\!(\bbx_\tau)
  - 2 (\!1\!-\!\eta\lambda) \bbw^T\! \bbK_{\bbD_t, \bbD_t}\! \bbu  . \nonumber
  \end{align}
  To obtain the minimum, we compute the stationary solution of \eqref{eq:error_expansion} with respect to $\bbu \in \reals^{M_t}$ and solve for the minimizing $\tbu^\star$, which in a manner similar to the logic in \eqref{eq:hatparam_update}, is given as
  \begin{align}\label{eq:minimal_weights}
   \tbu^\star = (1-\eta\lambda) \bbw - \eta \bbK_{\bbD_t, \bbD_t}^{-1} \nabla_f \check{L}_t (f_{t}(\bbS_t))   \bbkappa_{\bbD_t}(\bbx_\tau) \; .
  \end{align}
 Plug $\tbu^\star$ in \eqref{eq:minimal_weights} into the expression in \eqref{eq:min_gamma_expand} and  using the short-hand notation ${f}_{t}(\cdot)=\bbw^T \bbkappa_{\bbD_t}(\cdot)$ and $\sum_k u_k \kappa(\bbd_k, \cdot)= \bbu^T \bbkappa_{\bbD_t}(\cdot)$. Doing so simplifies \eqref{eq:min_gamma_expand} to
 \begin{align}\label{eq:min_gamma_optimal_weights} 
  &\Big\| (1\!-\!\eta\lambda) \bbw^T\bbkappa_{\bbD_t}(\cdot) - \eta\nabla_f\check L_t'({f}_t(\bbS_t)) -  \bbu^T \bbkappa_{\bbD_t}(\cdot) \Big\|_{\ccalH} \\
  & =\! \Big\|(1\!-\!\eta B\lambda\!)\bbw^T\! \bbkappa_{\bbD_t}(\cdot) \!
   -\! \eta \nabla_f \check{L}_t (f_{t}(\bbS_t)) \kappa(\bbx_\tau,\cdot)\! \nonumber \\
  &\ - \Big[\!(1\!-\!\eta\lambda) \bbw \!-\! \eta \bbK_{\bbD_t, \bbD_t}^{-1} \nabla_f \check{L}_t (f_{t}(\bbS_t))   \bbkappa_{\bbD_t}(\bbx_\tau)\Big]^T 
  \!\!\bbkappa_{\bbD_t}(\cdot)  \Big\|_{\ccalH}  .\nonumber
  \end{align}
 The above expression may be simplified by canceling like terms {$(1-\eta\lambda)\bbw^T\! \bbkappa_{\bbD_t}(\cdot)$ }and collecting the like terms, we get 
 \begin{align}\label{eq:min_gamma_optimal_weights2}
  &\Big\|\!(1\!-\!\eta\lambda) \bbw^T\bbkappa_{\bbD_t}(\cdot) - \eta\nabla_f\check L_t'({f}_t(\bbS_t)) -  \bbu^T \bbkappa_{\bbD_t}(\cdot) \Big\|_{\ccalH} \\
  &=\! \eta\Big\| \nabla_{\!\!f} \!\check{L}_t (f_{t}(\bbS_t)) \Big[\!\kappa(\bbx_\tau,\cdot)\!-\! \eta [\bbK_{\bbD_t, \bbD_t}^{-1}   \bbkappa_{\bbD_t}(\bbx_\tau)]^T 
  \!\!\bbkappa_{\bbD_t}(\cdot)\Big] \! \Big\|_{\ccalH}\nonumber\\
  &\leq\! \eta \Big|\nabla_{\!\!f} \!\check{L}_t (f_{t}(\bbS_t))\Big| \!\cdot\! \Big\|\!\kappa(\bbx_\tau,\cdot)\!-\! \eta [\bbK_{\bbD_t, \bbD_t}^{-1}   \bbkappa_{\bbD_t}(\bbx_\tau)]^T 
  \!\!\bbkappa_{\bbD_t}(\cdot) \! \Big\|_{\ccalH}.\nonumber
  \end{align}
 %
 The second inequality in \eqref{eq:min_gamma_optimal_weights2} is achieved by the use of triangle  and Cauchy Schwartz inequality. 
 Notice that the right-hand side of \eqref{eq:min_gamma_optimal_weights2} may be identified as the distance to the subspace $\ccalH_{\bbD_t}$ in \eqref{eq:subspace_dist2} defined in Lemma \ref{lemma_subspace_dist} scaled by at most a factor of $P$ times {$\eta |\check\ell_\tau'({f}_\tau(\bbx_\tau)) | $}. We may write the right hand side of \eqref{eq:min_gamma_optimal_weights2} as
 \begin{align}\label{eq:min_gamma_optimal_weights3}
   \!\!\!\eta & \Big|\nabla_f \check{L}_t (f_{t}(\bbS_t))\Big| \cdot \Big\|\kappa(\bbx_\tau,\cdot)- \eta [\bbK_{\bbD_t, \bbD_t}^{-1}   \bbkappa_{\bbD_t}(\bbx_\tau)]^T 
   \!\!\bbkappa_{\bbD_t}(\cdot) \! \Big\|_{\ccalH} \nonumber\\
   &=\eta\Big|\nabla_f \check{L}_t (f_{t}(\bbS_t))\Big| \text{dist}(\kappa(\bbx_\tau,\cdot),\ccalH_{\bbD_t})
  \end{align}
 where we have applied \eqref{eq:hilbert_subspace_dist_ls} regarding the definition of the subspace distance on the right-hand side of \eqref{eq:min_gamma_optimal_weights3} to replace the Hilbert-norm term. Now, when the KOMP stopping criterion is violated, i.e., \eqref{eq:komp_to_terminate} holds, which implies $\gamma_{M_t + 1} \leq \eps$. Therefore, the right-hand side of \eqref{eq:min_gamma_optimal_weights3} is upper-bounded by $\epsilon$, we can write
 \begin{align}\label{eq:min_gamma_optimal_weights5}
   \eta |\nabla_f \check{L}_t (f_{t}(\bbS_t))| \text{dist}(\kappa(\bbx_t,\cdot),\ccalH_{\bbD_t})\leq \epsilon.
 \end{align}
  After rearranging the terms in \eqref{eq:min_gamma_optimal_weights5}, we write
 \begin{align}\label{eq:min_gamma_optimal_weights4}
    \text{dist}(\kappa(\bbx_t,\cdot),\ccalH_{\bbD_t}) \leq \frac{\epsilon}{\eta|\nabla_f \check{L}_t (f_{t}(\bbS_t))|} \;,
  \end{align}
 where we have divided both sides by {$\eta|\nabla_f \check{L}_t (f_{t}(\bbS_t)) |$}. 
 %
 Observe that if \eqref{eq:min_gamma_optimal_weights4} holds, then $\gamma_{M_{t+1}} \leq \eps$ holds, but since $\gamma_{M_{t+1}} \geq \min_{j} \gamma_j $, we may conclude that \eqref{eq:komp_to_terminate} is satisfied. Consequently the model order at the subsequent step does not grow $M_{t+1} \leq M_t$ whenever \eqref{eq:min_gamma_optimal_weights4} is valid. 
 
 Now, let's take the contrapositive of the preceding expressions to observe that growth in the model order ($M_{t+1} = M_t + 1$) implies that the condition
 \begin{align}
  \label{eq:min_gamma2}
  \text{dist}(\kappa(\bbx_t,\cdot),\ccalH_{\bbD_t}) > \frac{\epsilon}{\eta|\nabla_f \check{L}_t (f_{t}(\bbS_t))|}
  \end{align} 
 holds.  Therefore, each time a new point is added to the model, the corresponding kernel function is guaranteed to be at least a distance of {$\frac{\epsilon}{\eta|\check L_t'({f}_t(\bbx_t)) |}$} from every other kernel function in the current model.

 By the $C$-Lipschitz continuity of the instantaneous loss (Assumption \ref{as:2}): specifically {$1/|\nabla_f \check{L}_t (f_{t}(\bbS_t))| \geq 1/HC$}, we can lower-bound the threshold condition in \eqref{eq:min_gamma2} as 
 \begin{align}\label{eq:min_gamma3}
   \frac{\epsilon}{\eta|\check\ell_t'({f}_t(\bbx_t)) |}\geq \frac{\epsilon}{\eta CH}
  \end{align}
 We have
  \begin{align}
  \label{eq:min_gamma321}
  \text{dist}(\kappa(\bbx_t,\cdot),\ccalH_{\bbD_t}) > \frac{\epsilon}{\eta CH}
  \end{align}
 Therefore, For a fixed compression budget $\epsilon$ and step size $\eta$, the KOMP stopping criterion is violated for the newest point whenever distinct dictionary points $\bbd_k$ and $\bbd_j$  for $j,k\in\{1,\dots,M_t\}$, satisfy the condition $\|\phi(\bbd_j) - \phi(\bbd_k) \|_\mathcal{H} > \frac{\epsilon}{\eta CH}$. Next, we follow the similar argument as provided in the proof of Theorem 3.1 in \cite{1315946}.   Since $\ccalX$ is compact and $\kappa$ is continuous, the range $\phi(\ccalX) $ (where $\phi(\bbx)=\kappa(\bbx,\cdot)$ for $\bbx \in \ccalX$) of the kernel transformation of feature space $\ccalX$ is compact.   Therefore, the number minimum of balls (covering number) of radius $\delta$ (here, $\delta = \frac{\epsilon}{\eta CH}$) needed to cover the set $\phi(\ccalX)$ is finite (see, e.g., \cite{anthony2009neural}) for a fixed compression budget $\epsilon$ and step-size $\eta$.
 
  To arrive at the characterization \eqref{model_order_0}, we note that \cite[Proposition 2.2]{1315946} states that for a Lipschitz continuous Mercer kernel $\kappa$ on compact set $\mathcal{X}\subseteq\mathbb{R}^p$, there exists a constant $Y$ such that for any training set $\{\bbx_t\}_{t=1}^\infty$ and any $\nu>0$, and it holds for  the number of elements in dictionary that  
  \begin{align}\label{model_order_1}
  M\leq Y\left(\frac{1}{\nu}\right)^p.
  \end{align}
 where $Y$ is a constant depends upon $\mathcal{X}$ and the kernel function. By \eqref{eq:min_gamma321}, we have that $\nu=\frac{\epsilon}{\eta CH}$, which we may substitute into \eqref{model_order_1} to obtain
  \begin{align}\label{model_order_2}
   M\leq Y(CH)^p\left(\frac{\eta}{\epsilon}\right)^p.
   \end{align}
   as stated in \eqref{model_order_0}. The lower bound $H$ in \eqref{model_order_0} comes from the fact that to represent the instantaneous gradient of the windowed loss of length $H$, a minimum of $H$ points are required.  \hfill $\qed$
 %

\footnotesize
\bibliographystyle{IEEEtran}
\bibliography{IEEEabrv,bibliography}

\normalsize

%
\newpage\onecolumn
\section*{Supplementary Material for \\ ``Nonstationary Nonparametric Online Learning: \\ Balancing Dynamic Regret and Model Parsimony"}


\section{Preliminary Technical Results}\label{apx_assumptions}
%


{Next, we establish some technical conditions in terms of Preposition \ref{prop_bounded}, Preposition \ref{prop1}, and Lemma \ref{lemma1}.which are essential to the ensuing proofs of Theorem \ref{theorem:dynamic_cost}  and Theorem \ref{theorem:dynamic_path}. For instance, the result of Proposition \ref{prop_bounded}  is utilized in \eqref{eq:iterate_prop1_subst32} and \eqref{eq:expectation_convexity26}, the result of Proposition \ref{prop1} is used in \eqref{proof_Strong_12}, and the statement of Lemma \ref{lemma1} is utilized in \ref{eq:iterate_prop1_subst0}. } 
%
%
%
%
\begin{proposition}\label{prop_bounded}
Let Assumptions \ref{as:first}-\ref{as:3} hold and denote $\{f_t \}$ as the sequence generated by Algorithm \ref{alg:soldd} with $f_0 = 0$. Further, denote $f^\star$ as the optimum defined by \eqref{static}. Both quantities are bounded by the constant $K:={C X}/{\lambda}$ in Hilbert norm for all $t$ as
\begin{align}\label{eq:prop_bounded}
\| f_t \|_{\ccalH} \leq \frac{C X}{\lambda} \; , \qquad \|f^\star \|_{\ccalH} \leq \frac{C X}{\lambda}
\end{align}
\end{proposition}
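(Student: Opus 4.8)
The plan is to bound $\|f_t\|_{\ccalH}$ by induction on $t$, exploiting the contraction that the Tikhonov regularization induces in the update, and then to bound $\|f^\star\|_{\ccalH}$ separately through the first-order optimality condition. The engine of both arguments is a single uniform estimate on the instantaneous functional gradient, which I would record first. By the reproducing property \eqref{eq:rkhs_def} together with Assumption \ref{as:first}, every feature map obeys $\|\kappa(\bbx,\cdot)\|_{\ccalH} = \sqrt{\kappa(\bbx,\bbx)} \leq X$, while Assumption \ref{as:2} gives $|\check\ell_t'(z)| \leq C$ for all $z \in \reals$. Substituting these into the chain-rule expansion $\nabla_f\check{L}_t(f_t(\bbS_t)) = \sum_{\tau}\check\ell_\tau'(f_t(\bbx_\tau))\kappa(\bbx_\tau,\cdot)$ from \eqref{eq:sgd_hilbert} and invoking the triangle inequality yields the uniform gradient bound $\|\nabla_f\check{L}_t(f_t(\bbS_t))\|_{\ccalH} \leq CX$ (absorbing the window length into the effective Lipschitz constant, as discussed after Assumption \ref{as:4}).

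Next I would carry out the induction. The base case $f_0 = 0$ gives $\|f_0\|_{\ccalH} = 0 \leq CX/\lambda$ trivially. For the inductive step, consider the unprojected iterate $\tilde{f}_{t+1} = (1-\eta\lambda)f_t - \eta\nabla_f\check{L}_t(f_t(\bbS_t))$, and apply the triangle inequality, the gradient bound above, and the inductive hypothesis $\|f_t\|_{\ccalH}\leq CX/\lambda$:
\begin{align}
\|\tilde{f}_{t+1}\|_{\ccalH}
&\leq (1-\eta\lambda)\|f_t\|_{\ccalH} + \eta\|\nabla_f\check{L}_t(f_t(\bbS_t))\|_{\ccalH} \nonumber \\
&\leq (1-\eta\lambda)\frac{CX}{\lambda} + \eta CX = \frac{CX}{\lambda}. \nonumber
\end{align}
Here the step-size condition $\eta < 1/\lambda$ guarantees $1-\eta\lambda \in (0,1)$, so the coefficient is a valid weight and the final equality is exact — this is precisely the fixed point of the affine recursion $r \mapsto (1-\eta\lambda)r + \eta CX$.

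The one genuinely non-routine point is passing the bound through the projection. I would argue that since $\ccalH_{\bbD_{t+1}}$ is a closed subspace of $\ccalH$ containing the origin, the orthogonal projection $\ccalP_{\ccalH_{\bbD_{t+1}}}$ defined in \eqref{eq:projection_hat} is non-expansive: the Pythagorean identity $\|\tilde{f}_{t+1}\|_{\ccalH}^2 = \|f_{t+1}\|_{\ccalH}^2 + \|\tilde{f}_{t+1} - f_{t+1}\|_{\ccalH}^2$ gives $\|f_{t+1}\|_{\ccalH} = \|\ccalP_{\ccalH_{\bbD_{t+1}}}[\tilde{f}_{t+1}]\|_{\ccalH} \leq \|\tilde{f}_{t+1}\|_{\ccalH} \leq CX/\lambda$, closing the induction. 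Thus compression can only shrink the iterate norm, so the bound is insensitive to the KOMP budget $\epsilon$.

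Finally, for the comparator $f^\star$ minimizing the regularized objective in \eqref{static}: because the $(\lambda/2)\|\cdot\|_{\ccalH}^2$ regularizer makes the objective differentiable and strongly convex, I would invoke stationarity to write $\lambda f^\star = -\nabla(\text{data term})(f^\star)$ and bound the right-hand side by $CX$ exactly as in the gradient estimate above, giving $\|f^\star\|_{\ccalH}\leq CX/\lambda$; equivalently one can compare $R(f^\star)\leq R(0)$ and use strong convexity. The main obstacle is really just the projection step — confirming it cannot inflate the norm — after which everything reduces to a contraction induction; I would also double-check the bookkeeping of the regularization constant (the relation $\lambda = \lambda' H$) to ensure the displayed fixed-point value is exactly $CX/\lambda$.
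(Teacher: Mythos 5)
Your proposal is correct and follows essentially the same route as the paper: an induction driven by the fixed point of the affine contraction $r \mapsto (1-\eta\lambda)r + \eta CX$ (valid since $\eta < 1/\lambda$), with the KOMP projection handled by non-expansiveness onto the subspace $\ccalH_{\bbD_{t+1}}$, exactly as in the paper's base case and inductive step. The only departures are cosmetic improvements: you justify non-expansiveness explicitly via the Pythagorean identity where the paper merely asserts it, and you prove $\|f^\star\|_{\ccalH}\leq CX/\lambda$ directly from the stationarity condition $\lambda f^\star = -\nabla(\text{data term})(f^\star)$, whereas the paper delegates this to a citation of Kivinen et al.; your flagged bookkeeping concern about $\lambda = \lambda' H$ versus the windowed gradient bound $\eta H C X$ is indeed the one place where the paper's own notation is loose, and your convention (absorbing $H$ into the effective Lipschitz constant) resolves it consistently.
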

%
The  proof of Proposition \ref{prop_bounded} is similar to in \cite[Proposition 7]{koppel2019parsimonious} but adapted to the distribution-free non-stationary case considered here.
%
\begin{proof} 
Since we repeatedly use the Cauchy-Schwartz inequality together with the reproducing kernel property in the following analysis, we here note that for all $g\in\ccalH$,  $|g(\bbx_t) | \leq | \langle g, \kappa(\bbx_t, \cdot) \rangle_{\ccalH} | \leq X \| g \|_{\ccalH}$. Now, consider the magnitude of $f_1$ in the Hilbert norm, given $f_0 = 0$
{\begin{align}\label{eq:iterate_opt_bound}
\| f_1 \|_{\ccalH} 
&= \Big\| \ccalP_{ \ccalH_{\bbD_{1}}} \Big[\eta_0
 \nabla_f\check{\ell}(0) \Big] \Big\|_{\ccalH} \nonumber \\
 &\leq \eta_0 \|  \nabla_f\check{\ell}(0) \|_{\ccalH}
  \leq \eta_0 |\check{\ell}'(0)| \|\kappa(\bbx_0, \cdot)\|_{\ccalH} \nonumber \\
&  \leq \eta_0 C X < \frac{C X}{\lambda}.
\end{align}}
The first equality comes from substituting in $f_0=0$ and the second inequality comes from the definition of optimality condition of the projection operator and the homogeneity of the Hilbert norm, and the chain rule applied to definition of the functional stochastic gradient in with the Cauchy-Schwartz inequality. Lastly, we make use of Assumptions \ref{as:first} and \ref{as:2} to bound the scalar derivative $\check\ell'$ using the Lipschitz constant, and the boundedness of the kernel map [cf. \eqref{eq:bounded_kernel}]. The final strict inequality in \eqref{eq:iterate_opt_bound} comes from applying the step-size condition $\eta_0 < 1/\lambda$.

Now we consider the induction step. Given the induction hypothesis $\|f_t \|_{\ccalH} \leq CX/\lambda$, consider the magnitude of the iterate at the time $t+1$ as
{\begin{align}\label{eq:iterate_opt_bound_induction1}
\|f_{t+1} \|_{\ccalH}& = \Big\|\ccalP_{ \ccalH_{\bbD_{t+1}}} \Big[
(1-\eta H\lambda) f_t 
- \eta \nabla_f\check{L}_t(f_{t}(\bbS_t)) \Big] \Big\|_{\ccalH} \nonumber 
\\ 
&\leq \|(1-\eta H\lambda) f_t 
- \eta \nabla_f\check{L}_t(f_{t}(\bbS_t)) \|_{\ccalH} \nonumber 
\\
&\leq (1-\eta H\lambda)\| f_t \|
+ \eta \| \nabla_f\check{L}_t(f_{t}(\bbS_t)) \|_{\ccalH} \; ,
\end{align}}
where we have applied the non-expansion property of the projection operator for the first inequality on the right-hand side of \eqref{eq:iterate_opt_bound_induction1}, and the triangle inequality for the second. Now, apply the induction hypothesis $\|f_t \|_{\ccalH} \leq CX/\lambda$ to the first term on the right-hand side of \eqref{eq:iterate_opt_bound_induction1}, and the chain rule together with the triangle inequality to the second to obtain
\begin{align}\label{eq:iterate_opt_bound_induction2}
\|f_{t+1} \|_{\ccalH}
&\leq (1-\eta H\lambda) \frac{C X}{\lambda}
+ \eta \sum\limits_{\tau=t-H+1}^{t} | \check \ell_t'(f_{t}(\bbx_t)) | \|\kappa(\bbx_t, \cdot)\|_\ccalH
\nonumber \\ 
&\leq (\frac{1}{\lambda} -\eta H) CX 
+ \eta HC X = \frac{C X}{\lambda}
\end{align}
where we have made use of Assumptions \ref{as:first} and \ref{as:2} to bound the scalar derivative $\check\ell'$ using the Lipschitz constant, and the boundedness of the kernel map [cf. \eqref{eq:bounded_kernel}] as in the base case for $f_1$, as well as the fact that $\eta < 1/(H\lambda)$. The same bound holds for $f^\star$ by applying \cite{Kivinen2004}[ Section V-B ] with $m\rightarrow \infty$.
 \end{proof}

Next we introduce a proposition which quantifies the error due to subspace projections in terms of the ratio of the compression budget to the learning rate. 
%
\begin{proposition}\label{prop1}
Fix an independent realization $\bbx_t$ that parameterizes the loss $L_t$ at time $t$. Then the difference between the projected online functional gradient and the un-projected online functional gradient of the regularized loss by \eqref{eq:proj_grad} and \eqref{eq:stoch_reg_grad}, respectively, is bounded for all $t$ as
{\begin{align}\label{eq:prop1}
 \| \tilde{\nabla}_fL_t(f_{t}(\bbS_t)) - {\nabla}_fL_t(f_{t}(\bbS_t)) \|_{\ccalH} \leq \frac{\eps}{\eta}
\end{align}}
where $\eta>0$ denotes the algorithm step-size and $\eps>0$ is the compression parameter of Algorithm \ref{alg:soldd}.
\end{proposition}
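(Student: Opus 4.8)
The plan is to reduce the claimed gradient bound directly to the compression-budget guarantee $\|f_{t+1} - \tilde f_{t+1}\|_{\ccalH} \leq \epsilon$ produced by KOMP, by rewriting both gradients through the update relations that define them.

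First, I would recall that the un-projected regularized functional gradient ${\nabla}_f L_t(f_t(\bbS_t))$ in \eqref{eq:stoch_reg_grad} is exactly the object for which the unconstrained step \eqref{eq:sgd_hilbert} reads $\tilde f_{t+1} = f_t - \eta\,{\nabla}_f L_t(f_t(\bbS_t))$, once the Tikhonov term $\lambda f_t$ is absorbed into the gradient. Solving for the gradient gives
\begin{equation}
{\nabla}_f L_t(f_t(\bbS_t)) = \frac{1}{\eta}\left(f_t - \tilde f_{t+1}\right).
\end{equation}
Analogously, the projected gradient $\tilde{\nabla}_f L_t(f_t(\bbS_t))$ in \eqref{eq:proj_grad} is defined so that the \emph{actual} DynaPOLK iterate produced by the projected step \eqref{eq:projection_hat} obeys $f_{t+1} = f_t - \eta\,\tilde{\nabla}_f L_t(f_t(\bbS_t))$, whence
\begin{equation}
\tilde{\nabla}_f L_t(f_t(\bbS_t)) = \frac{1}{\eta}\left(f_t - f_{t+1}\right).
\end{equation}

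Second, I would subtract these two identities. Since both descend from the identical point $f_t$, the common $f_t/\eta$ terms cancel, leaving the clean residual
\begin{equation}
\tilde{\nabla}_f L_t(f_t(\bbS_t)) - {\nabla}_f L_t(f_t(\bbS_t)) = \frac{1}{\eta}\left(\tilde f_{t+1} - f_{t+1}\right).
\end{equation}
Taking the RKHS norm of both sides and invoking the KOMP stopping guarantee $\|\tilde f_{t+1} - f_{t+1}\|_{\ccalH}\leq\epsilon$, which holds by construction of Algorithm \ref{alg:soldd}, then yields $\|\tilde{\nabla}_f L_t(f_t(\bbS_t)) - {\nabla}_f L_t(f_t(\bbS_t))\|_{\ccalH} \leq \epsilon/\eta$, as claimed.

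The argument involves no hard estimate; the only real subtlety is bookkeeping. One must set up the two gradients so that they are defined relative to the same base iterate $f_t$ and the same step-size $\eta$, which guarantees that the $f_t$ contributions cancel exactly and the difference collapses to the projection residual $\tilde f_{t+1} - f_{t+1}$. The single nontrivial fact being used is that this residual is precisely the quantity KOMP controls to within $\epsilon$; once that identification is made, the scaling factor $1/\eta$ is immediate from the shared step-size appearing in both update rules. Hence I expect the main (minor) obstacle to be stating the definitions of the projected and un-projected gradients carefully enough that the cancellation is manifest, rather than any genuine analytic difficulty.
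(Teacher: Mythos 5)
Your proof is correct and follows essentially the same route as the paper: both arguments rewrite the two gradients via the update relations $\tilde f_{t+1} = f_t - \eta\,{\nabla}_f L_t(f_t(\bbS_t))$ and $f_{t+1} = f_t - \eta\,\tilde{\nabla}_f L_t(f_t(\bbS_t))$ so that the gradient difference collapses to $\frac{1}{\eta}\|\tilde f_{t+1} - f_{t+1}\|_{\ccalH}$, then invoke the KOMP stopping criterion $\|\tilde f_{t+1} - f_{t+1}\|_{\ccalH} \leq \epsilon$. The paper does the same bookkeeping by expanding the projection operator explicitly and factoring out $1/\eta$; your finite-difference phrasing is just a cleaner presentation of the identical computation.
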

%
 \begin{proof}
 Consider the square-Hilbert-norm difference of $ \tilde{\nabla}_fL_t(f_{t}(\bbS_t)) $  and ${\nabla}_fL_t(f_{t}(\bbS_t)) $ defined in \eqref{eq:stoch_reg_grad} and \eqref{eq:proj_grad}, respectively,
\begin{align}\label{eq:norm_stoch_grad_diff}
  \| &\tilde{\nabla}_f L_t(f_{t}(\bbS_t)) - {\nabla}_fL_t(f_{t}(\bbS_t)) \|_{\ccalH}^2  \\
 & = \Big\| \Big( f_{t} -\ccalP_{ \ccalH_{\bbD_{t+1}}} \Big[
   f_t 
 - \eta {\nabla}_fL_t(f_{t}(\bbS_t)) \Big]\Big)/\eta 
  - {\nabla}_fL_t(f_{t}(\bbS_t)) \Big\|_{\ccalH}^2 \nonumber
 \end{align}
 Multiply and divide ${\nabla}_fL_t(f_{t}(\bbS_t))$, the last term, by $\eta$, and reorder terms to write
\begin{align}\label{eq:norm_stoch_grad_expand}
 &\Big\| \Big( f_{t} -\ccalP_{ \ccalH_{\bbD_{t+1}}} \Big[  f_t 
 - \eta {\nabla}_fL_t(f_{t}(\bbS_t)) \Big]\Big)/\eta 
  - {\nabla}_fL_t(f_{t}(\bbS_t)) \Big\|_{\ccalH}^2 \nonumber 
  \\
 & = \Big\| \frac{1}{\eta}\left( f_{t} \!-\!\eta {\nabla}_fL_t(f_{t}(\bbS_t))\right) \!-\!\frac{1}{\eta}\ccalP_{ \ccalH_{\bbD_{t+1}}} \Big[  f_t - \eta {\nabla}_fL_t(f_{t}(\bbS_t)) \Big]  \Big\|_{\ccalH}^2 \nonumber 
 \\
 &=\frac{1}{\eta^2}\| \tilde{f}_{t+1} - f_{t+1} \|_{\ccalH}^2 
   \end{align}
   where we have substituted the definition of $\tilde{f}_{t+1}$ and $f_{t+1}$ \eqref{eq:projection_hat}, and pulled the nonnegative scalar $\eta$ outside the norm. Now, note that the KOMP stopping criterion in Algorithm \ref{alg:soldd} is $\lVert \tilde{f}_{t+1} - f_{t+1} \rVert_{\ccalH} \leq \epsilon$, which we apply to the last term on the right-hand side of \eqref{eq:norm_stoch_grad_expand} to conclude \eqref{eq:prop1}.
 \end{proof}

%
Next we establish that Algorithm \ref{alg:soldd} yields a sequence that satisfies a standard descent relation in the statement of Lemma \ref{lemma1}, via convexity and smoothness of the cost functions. 
\begin{lemma} \label{lemma1}
Consider the sequence generated $\{f_t \}$ by Algorithm \ref{alg:soldd} with $f_0 = 0$. Under Assumptions \ref{as:first}-\ref{as:4}, the following online descent relation holds for a static comparator $f^\star$ as defined in \eqref{static}.
{\begin{align}\label{eq:lemma1}
\| f_{t+1} - f^\star \|_{\ccalH}^2
	 &\leq\| f_t - f^\star \|_{\ccalH}^2
	 - 2 \eta [L_t(f_t(\bbS_t)) -L_t(f^\star(\bbS_t))]  \nonumber \\
	 &\quad+ 2 \eps \| f_t - f^\star \|_{\ccalH} 
	+ \eta^2 \| \tilde{\nabla}_fL_t(f_{t}(\bbS_t)) \|_{\ccalH}^2 \; .
\end{align}}
\end{lemma}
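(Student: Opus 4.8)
The plan is to establish the descent relation \eqref{eq:lemma1} by expanding the squared Hilbert norm of $\|f_{t+1}-f^\star\|_{\ccalH}^2$, where $f_{t+1}$ is the projected online gradient iterate \eqref{eq:projection_hat}. First I would introduce the shorthand $\tilde f_{t+1} = f_t - \eta\,\tilde\nabla_f L_t(f_t(\bbS_t))$ for the \emph{projected} (inexact) gradient step, so that $f_{t+1}$ equals this expression with the compression-induced gradient $\tilde\nabla_f L_t$ in place of the true gradient. The key idea is to write $\|f_{t+1}-f^\star\|_{\ccalH}^2 = \|f_t - f^\star - \eta\,\tilde\nabla_f L_t(f_t(\bbS_t))\|_{\ccalH}^2$ and expand it as
\begin{align*}
\|f_{t+1}-f^\star\|_{\ccalH}^2
= \|f_t - f^\star\|_{\ccalH}^2
- 2\eta\,\langle \tilde\nabla_f L_t(f_t(\bbS_t)),\, f_t - f^\star\rangle_{\ccalH}
+ \eta^2\,\|\tilde\nabla_f L_t(f_t(\bbS_t))\|_{\ccalH}^2.
\end{align*}
This immediately produces the first and last terms of \eqref{eq:lemma1}, so the work reduces to lower-bounding the cross term $\langle \tilde\nabla_f L_t(f_t(\bbS_t)),\, f_t - f^\star\rangle_{\ccalH}$.

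Next I would split the inexact gradient as $\tilde\nabla_f L_t = \nabla_f L_t + (\tilde\nabla_f L_t - \nabla_f L_t)$, handling the true-gradient part and the compression-error part separately. For the true-gradient part, convexity of $L_t$ (Assumption \ref{as:3}, the windowed sum of convex losses plus Tikhonov regularization) gives the first-order inequality
\begin{align*}
\langle \nabla_f L_t(f_t(\bbS_t)),\, f_t - f^\star\rangle_{\ccalH}
\geq L_t(f_t(\bbS_t)) - L_t(f^\star(\bbS_t)),
\end{align*}
which yields the $-2\eta[L_t(f_t(\bbS_t)) - L_t(f^\star(\bbS_t))]$ term. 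For the compression-error part, I would apply Cauchy--Schwarz and then Proposition \ref{prop1}, which bounds $\|\tilde\nabla_f L_t(f_t(\bbS_t)) - \nabla_f L_t(f_t(\bbS_t))\|_{\ccalH} \leq \eps/\eta$; combining these gives
\begin{align*}
-2\eta\,\langle \tilde\nabla_f L_t - \nabla_f L_t,\, f_t - f^\star\rangle_{\ccalH}
\leq 2\eta\,\frac{\eps}{\eta}\,\|f_t - f^\star\|_{\ccalH}
= 2\eps\,\|f_t - f^\star\|_{\ccalH},
\end{align*}
producing the remaining $+2\eps\|f_t - f^\star\|_{\ccalH}$ term. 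Assembling the three contributions gives exactly \eqref{eq:lemma1}.

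The main obstacle I anticipate is bookkeeping the compression error correctly: the projection operator $\ccalP_{\ccalH_{\bbD_{t+1}}}$ does not commute with the gradient step, so one must be careful that the projected iterate $f_{t+1}$ is genuinely expressible as a single inexact gradient step $f_t - \eta\,\tilde\nabla_f L_t$ with $\tilde\nabla_f L_t$ defined precisely as the compression-perturbed gradient (the quantity whose error Proposition \ref{prop1} controls). Getting the algebra of the cross-term split to land on the true gradient inside the convexity inequality, while routing the projection discrepancy entirely into the $\eps$-term, is the delicate point. A secondary subtlety is that $f^\star$ is the static comparator, so the convexity step uses $L_t$ evaluated at the fixed $f^\star$; this is legitimate because each $L_t$ is convex on $\ccalH$ and the inequality holds pointwise in $t$ before summing. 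Once these identifications are made, no smoothness (Assumption \ref{as:4}) is actually needed for this particular relation---it enters only in the downstream theorems---so the proof is essentially convexity plus the projection-error bound of Proposition \ref{prop1}.
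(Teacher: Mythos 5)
Your proposal is correct and follows essentially the same route as the paper's own proof: expand $\|f_t - \eta\,\tilde{\nabla}_f L_t(f_t(\bbS_t)) - f^\star\|_{\ccalH}^2$, add and subtract the true gradient ${\nabla}_f L_t$ in the cross term, bound the compression error via Cauchy--Schwarz and Proposition \ref{prop1}, and apply first-order convexity to the remaining inner product. Your closing observation that Assumption \ref{as:4} is not actually needed here is also accurate (the paper's proof invokes only convexity and the projection-error bound), and the only nitpick is notational --- the paper reserves $\tilde f_{t+1}$ for the \emph{unprojected} step, whereas $f_{t+1} = f_t - \eta\,\tilde{\nabla}_f L_t(f_t(\bbS_t))$ is exactly the identity \eqref{eq:iterate_tilde} you rely on.
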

With Propositions \ref{prop_bounded} - \ref{prop1} and Lemma \ref{lemma1} stated, we may now establish some basic results that form the backbone of our dynamic regret analysis to come.
%
 
 \begin{proof} 
 Begin by considering the square of the Hilbert-norm difference between $f_{t+1}$ and $f^\star$ defined by \eqref{static}, and expand the square to write
 \begin{align}\label{eq:iterate_square_expand}
  \| f_{t+1} - f^\star \|_{\ccalH}^2 
  	=& \| f_t - \eta \tilde{\nabla}_fL_t(f_{t}(\bbS_t)) - f^\star\|_{\ccalH}^2  \\
  	 =&\|  f_t - f^\star \|_{\ccalH}^2 - 2 \eta \langle f_t - f^\star, \tilde{\nabla}_fL_t(f_{t}(\bbS_t)) \rangle_{\ccalH} \nonumber
  	 \\
  	 &+ \eta^2 \| \tilde{\nabla}_fL_t(f_{t}(\bbS_t)) \|_{\ccalH}^2.\nonumber
  \end{align}
 Add and subtract the gradient of the regularized instantaneous risk ${\nabla}_fL_t(f_{t}(\bbS_t))$ defined in \eqref{eq:stoch_reg_grad} to the second term on the right-hand side of \eqref{eq:iterate_square_expand} to obtain
 \begin{align}\label{eq:iterate_stoch_grad}
 \| f_{t+1} - f^\star \|_{\ccalH}^2 
 	 &=\| f_t - f^\star \|_{\ccalH}^2
 	 - 2 \eta \langle f_t - f^\star, {\nabla}_fL_t(f_{t}(\bbS_t)) \rangle_{\ccalH} \nonumber
 	  \\
 	 &\quad- 2 \eta \langle f_t-f^\star, \tilde{\nabla}_fL_t(f_{t}(\bbS_t)) - {\nabla}_fL_t(f_{t}(\bbS_t)) \rangle_{\ccalH} \nonumber
 	  	  \\
 	  	 & \quad\quad
 	+ \eta^2 \| \tilde{\nabla}_fL_t(f_{t}(\bbS_t)) \|_{\ccalH}^2. 
 \end{align}
 We deal with the third term on the right-hand side of \eqref{eq:iterate_stoch_grad}, which represents the directional error associated with the sparse stochastic projections, by applying the Cauchy-Schwartz inequality together with Proposition \ref{prop1} to obtain
 \begin{align}\label{eq:iterate_prop1_subst}
  \| f_{t+1} - f^\star \|_{\ccalH}^2 
  	 \leq&\| f_t - f^\star \|_{\ccalH}^2
  	 - 2 \eta \langle f_t - f^\star, {\nabla}_fL_t(f_{t}(\bbS_t)) \rangle_{\ccalH} \nonumber
  	  \\
  	 &+ \!2 \eps \| f_t \!-\! f^\star \|_{\ccalH} 
  	\!+\! \eta^2 \| \tilde{\nabla}_fL_t(f_{t}(\bbS_t)) \|_{\ccalH}^2 .
  \end{align}
 From the convexity of loss function at each $t$, it holds that
 {\begin{align}\label{eq:convexity}
 L_t(f_t(\bbS_t)) -L_t(f^\star(\bbS_t)) \leq \langle f_t - f^\star, {\nabla}_f L_t(f_{t}(\bbS_t)) \rangle_{\ccalH}  \;,
 \end{align}}
 which we substitute into the second term on the right-hand side of the relation given in \eqref{eq:iterate_prop1_subst} to obtain
 \begin{align}\label{eq:expectation_convexity}
 \| f_{t+1} \!-\! f^\star \|_{\ccalH}^2
 	 \leq&\| f_t - f^\star \|_{\ccalH}^2
 	 - 2 \eta [L_t(f_t(\bbS_t)) -L_t(f^\star(\bbS_t))]  \nonumber \\
 	  	&\!+ 2 \eps \| f_t\! -\! f^\star \|_{\ccalH} \!+\! \eta^2 \| \tilde{\nabla}_fL_t(f_{t}(\bbS_t)) \|_{\ccalH}^2 \; . 
 \end{align}
 as stated in Lemma \ref{lemma1}.
  \end{proof}
 
 %
%

 %
 Here, to establish some baseline results that are helpful in the dynamic setting, we characterize the static regret of Algorithm \ref{alg:soldd}. {We are interested in deriving the dynamic regret bounds for DynaPOLK in terms of the function variations $\mathcal{V}_T$. To achieve that, the following static regret analysis is precursor and presented here in detail for the completeness. }
%
%
%
\subsection{Static Regret}\label{sec:static}
The classical performance metric for an action sequence $\{f_t\}_{t=1}^{T}$ is its cost accumulation as compared with a best single action in hindsight $f^\star$, defined as the static regret:
\begin{align}\label{static}
\textbf{Reg}^S_T=\sum_{t=1}^{T}\ell_{t}(f_{t}(\bbx_t))-\sum_{t=1}^{T}\ell_{t}(f^\star(\bbx_t)) \; ,  
\end{align}
where $f^\star= \argmin_{f \in \ccalH}\sum_{t=1}^T \ell_t(f(\bbx_t))$. We begin by defining some key quantities to simplify the analysis and clarify the technical setting for which our regret bounds are valid. To be specific, define the regularized online gradient as
%
{\begin{align}\label{eq:stoch_reg_grad}
{\nabla}_fL_t(f_{t}(\bbS_t)) =\nabla_f\check{L}_t(f_{t}(\bbS_t))  + \lambda  f_t
\end{align}}
%
and its projected variant associated with the step defined in \eqref{eq:projection_hat}:
\begin{align}\label{eq:proj_grad}
\tilde{\nabla}_fL_t(f_{t}(\bbS_t)) =\Big( f_{t} - \ccalP_{ \ccalH_{\bbD_{t+1}}} \Big[
 f_t 
- {\eta}_t {\nabla}_fL_t(f_{t}(\bbS_t)) \Big]\Big)/\eta
\end{align}
such that the Hilbert space update of Algorithm \ref{alg:soldd} [cf. \eqref{eq:projection_hat}] may be expressed as an online projected gradient step 
\begin{align}\label{eq:iterate_tilde}
f_{t+1} = f_t - \eta \tilde{\nabla}_fL_t(f_{t}(\bbS_t)) \; .
\end{align}

The definitions \eqref{eq:proj_grad} - \eqref{eq:stoch_reg_grad} will be used to analyze the convergence behavior of the algorithm.
%
%

We first establish a foundational result for the subsequent analysis of the non-stationary setting, which is conditions under which Algorithm \ref{alg:soldd} is asymptotically no-regret. This result is stated next.
%
\begin{theorem}\label{theorem:static}
Suppose $\{f_t\}\subset \ccalH$ is the function sequence generated by Algorithm \ref{alg:soldd} for $T$ iterations. Then for regularization parameter $\lambda>0$, with step-size $\eta<\min(1/(H\lambda),1/L)$, under Assumptions \ref{as:first}-\ref{as:3}, we have the following regret bound:
\begin{align}\label{eq:static_regret_thm}
	\textbf{Reg}^S_T&\leq\frac{\| f_1 - f^\star \|_{\ccalH}^2}{2 \eta }+  \frac{2\epsilon T CX}{\eta\lambda} 
		  		+ \frac{\epsilon^2T}{\eta}+\frac{\eta Z^2 T}{2} \; \nonumber\\
				&= \mathcal{O}\left(\frac{1+( \epsilon +\epsilon^2) T}{\eta}+\eta T\right).  
\end{align}
{where $Z:=CH(1+X)$}. Then, the static regret grows sublinearly $\textbf{Reg}^S_T\leq \ccalO(\sqrt{T})$ in $T$ for step-size selection $\eta=\ccalO(T^{-1/2})$ and compression budget $\epsilon > \ccalO(T^{-1/2})$.
\end{theorem}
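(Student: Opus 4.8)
The plan is to run the standard online-gradient-descent regret argument, but carried out in the RKHS and with the inexact (projected) gradient controlled by Proposition~\ref{prop1}. First I would invoke the one-step descent relation of Lemma~\ref{lemma1} with the static comparator $f^\star$ from \eqref{static}; this already isolates the per-step suboptimality gap $L_t(f_t(\bbS_t))-L_t(f^\star(\bbS_t))$ together with two error terms, namely the linear term $2\eps\|f_t-f^\star\|_{\ccalH}$ coming from the compression bias and the quadratic term $\eta^2\|\tilde{\nabla}_f L_t(f_t(\bbS_t))\|_{\ccalH}^2$.

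Next I would bound these two error terms by constants independent of $t$. For the first, the triangle inequality together with Proposition~\ref{prop_bounded} gives $\|f_t-f^\star\|_{\ccalH}\le\|f_t\|_{\ccalH}+\|f^\star\|_{\ccalH}\le 2CX/\lambda$, so each step contributes at most $4\eps CX/\lambda$. For the second, I would split $\tilde{\nabla}_f L_t=(\tilde{\nabla}_f L_t-\nabla_f L_t)+\nabla_f L_t$ and apply $(a+b)^2\le 2a^2+2b^2$: Proposition~\ref{prop1} controls the first piece by $\eps/\eta$, while a uniform gradient bound $\|\nabla_f L_t(f_t(\bbS_t))\|_{\ccalH}\le Z$ with $Z=CH(1+X)$ controls the second. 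This last bound is where the hypotheses combine, since $\nabla_f L_t=\nabla_f\check{L}_t+\lambda f_t$, the data-fit part is a sum of $H$ terms each bounded by $CX$ through $C$-Lipschitz continuity (Assumption~\ref{as:2}) and the kernel bound (Assumption~\ref{as:first}), and the regularization part is bounded using Proposition~\ref{prop_bounded}. Together this yields $\eta^2\|\tilde{\nabla}_f L_t(f_t(\bbS_t))\|_{\ccalH}^2\le 2\eps^2+2\eta^2 Z^2$.

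I would then rearrange Lemma~\ref{lemma1} to place $2\eta[L_t(f_t(\bbS_t))-L_t(f^\star(\bbS_t))]$ on one side and sum over $t=1,\dots,T$. The squared-distance terms telescope, leaving $\|f_1-f^\star\|_{\ccalH}^2-\|f_{T+1}-f^\star\|_{\ccalH}^2\le\|f_1-f^\star\|_{\ccalH}^2$, while the two error terms each accumulate linearly in $T$. Dividing through by $2\eta$ and collecting reproduces exactly the four summands in \eqref{eq:static_regret_thm}, hence the $\ccalO\big((1+(\eps+\eps^2)T)/\eta+\eta T\big)$ form. Finally, inserting $\eta=\ccalO(T^{-1/2})$ (with a correspondingly small compression budget $\eps$) and retaining only the dominant orders produces the advertised $\ccalO(\sqrt{T})$ growth.

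The main obstacle I anticipate is not the telescoping---which is routine---but establishing the uniform gradient bound $Z$ and, more importantly, keeping the compression-induced perturbation benign. Because Lemma~\ref{lemma1} contributes a \emph{linear} (rather than quadratic) error term $2\eps\|f_t-f^\star\|_{\ccalH}$, the argument crucially relies on Proposition~\ref{prop_bounded} to hold $\|f_t-f^\star\|_{\ccalH}$ uniformly bounded: without that a priori iterate bound the $\eps$-term could not be summed to $\ccalO(\eps T)$, and the balance between $\eps$ and $\eta$ governing the final rate would collapse.
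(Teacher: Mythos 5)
Your proposal matches the paper's own proof essentially step for step: it rests on the same three ingredients (the descent relation of Lemma~\ref{lemma1}, the iterate/comparator bound $\|f_t-f^\star\|_{\ccalH}\le 2CX/\lambda$ from Proposition~\ref{prop_bounded}, and the compression-error bound $\eps/\eta$ from Proposition~\ref{prop1} combined with the uniform gradient bound $Z=CH(1+X)$ via $(a+b)^2\le 2a^2+2b^2$), followed by the same telescoping sum and division by $2\eta$. The only deviation is an immaterial constant: your careful bookkeeping yields $\eta Z^2 T$ where the paper states $\eta Z^2 T/2$ (the paper itself silently drops a factor of $2$ between \eqref{eq:expectation_convexity222222} and \eqref{eq:expectation_convexity22222}), which does not affect the $\ccalO\big((1+(\eps+\eps^2)T)/\eta+\eta T\big)$ conclusion or the $\ccalO(\sqrt{T})$ rate.
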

%
%

%
%
 %
 %
 \begin{proof}
 
 Begin by noting that
 \begin{align}\label{eq:iterate_prop1_subst011}
 \| &\tilde{\nabla}_f L_t(f_{t}(\bbS_t)) \|_{\ccalH}
 \\
 &\quad=\| \tilde{\nabla}_f L_t(f_{t}(\bbS_t))-{\nabla}_f L_t(f_{t}(\bbS_t))+{\nabla}_fL_t(f_{t}(\bbS_t))\|_{\ccalH}\nonumber
 \end{align}
 where we add subtract the term { $ {\nabla}_fL_t(f_{t}(\bbS_t))$}. Using Cauchy-Schwartz inequality and the result of Proposition \ref{prop1}, we get
 \begin{align}\label{proof_Strong_111}
 \| &\tilde{\nabla}_f L_t(f_{t}(\bbS_t)) \|_{\ccalH}^2\nonumber
 \\
 & \leq  \left(\| \tilde{\nabla}_f L_t(f_{t}(\bbS_t))-{\nabla}_f L_t(f_{t}(\bbS_t))\|_{\ccalH}+\|
 {\nabla}_f L_t(f_{t}(\bbS_t)) \|_{\ccalH}\right)^2\nonumber \\
 & \leq  2\| \tilde{\nabla}_f L_t(f_{t}(\bbS_t))-{\nabla}_f L_t(f_{t}(\bbS_t))\|_{\ccalH}^2+2\|
 {\nabla}_f L_t(f_{t}(\bbS_t)) \|_{\ccalH}^2
 \nonumber\\ 
 &\leq \frac{2\epsilon^2}{\eta^2}+2\|{\nabla}_f L_t(f_{t}(\bbS_t))\|^2.
 \end{align}
 %
 %
 Substitute the upper bound obtained in \eqref{proof_Strong_111} into the descent property stated in Lemma\ref{lemma1} [c.f. \eqref{eq:expectation_convexity}] to obtain
 \begin{align}\label{eq:expectation_convexity2222}
 \|f_{t+1} \!\!-\!\! f^\star \|_{\ccalH}^2
 	 \!\leq&\| f_t \!-\! f^\star \|_{\ccalH}^2
 	 \!-\! 2 \eta [L_t(f_t(\bbS_t)) \!-\!L_t(f^\star(\bbS_t))]  
 	  	 \\
 	  	 &+ 2 \eps \| f_t - f^\star \|_{\ccalH}+{2\epsilon^2}+2\eta^2\|{\nabla}_f L_t(f_{t}(\bbS_t))\|^2.   \nonumber
 \end{align}
 From the definition of loss function $\ell_t(\cdot)$, we have
 \begin{align}\label{gradient_0}
 \|{\nabla}_f L_t(f_{t}(\bbS_t))\|=&\|{\nabla}_f\check L_t(f_{t}(\bbS_t))+\lambda H f_{t}(\bbS_t)\|\nonumber 
 \\
 =&\|{\nabla}_f\check L_t(f_{t}(\bbS_t))\|+\lambda H\| f_{t}(\bbS_t)\|.
 \end{align}
 The Assumption \ref{as:2} implies that $\|{\nabla}_f\check L_t(f_{t}(\bbS_t))\|\leq CH$ and from the result of Proposition \ref{prop_bounded}, we get
 \begin{align}\label{bound_gradient}
 \|{\nabla}_f L_t(f_{t}(\bbS_t))\|\leq CH(1+X).
 \end{align}
 Let us define $Z=CH(1+X)$ and utilize the upper bound in \eqref{bound_gradient} into \eqref{eq:expectation_convexity2222}, we get 
 \begin{align}\label{eq:expectation_convexity222222}
 \| f_{t+1} - f^\star \|_{\ccalH}^2
 	 \leq&\| f_t - f^\star \|_{\ccalH}^2
 	 - 2 \eta [L_t(f_t(\bbS_t)) -L_t(f^\star(\bbS_t))]  \nonumber 
 	 \\ 
 	 &+ 2 \eps \| f_t - f^\star \|_{\ccalH}+{2\epsilon^2}+2\eta^2Z^2 .
 \end{align}
  After rearranging the terms in \eqref{eq:expectation_convexity222222}, we get 
 \begin{align}\label{eq:expectation_convexity22222}
 \!\!\!\!\!2 \eta [L_t(f_t(\bbS_t)) \!-\!L_t(f^\star(\bbS_t))]
 	 \!\leq&\| f_t \!-\! f^\star \|_{\ccalH}^2
 	 	 - \| f_{t+1} - f^\star \|_{\ccalH}^2   
 	 	  	 \\ 
 	 	  	 &+ 2 \eps \| f_t - f^\star \|_{\ccalH}+{2\epsilon^2}+\eta^2Z^2.\nonumber	  
 \end{align}
 Next, divide both sides of \eqref{eq:expectation_convexity22222} by $2\eta$, we get
 \begin{align}\label{eq:expectation_convexity2}
 	  [& L_t(f_t(\bbx_t)) -L_t(f^\star(\bbx_t))]
 	  \\
 	  &\leq\frac{\| f_t \!-\! f^\star \|_{\ccalH}^2}{2 \eta }-\frac{\| f_{t+1} \!-\! f^\star \|_{\ccalH}^2}{2 \eta }
 	   +  \frac{\eps}{\eta } \| f_t \!-\! f^\star \|_{\ccalH} 
 	+ \frac{\epsilon^2}{\eta }\!+\!\frac{\eta Z^2}{2}\; \nonumber\\
 	&\leq\frac{\| f_t - f^\star \|_{\ccalH}^2}{2 \eta}-\frac{\| f_{t+1} - f^\star \|_{\ccalH}^2}{2 \eta }
 		   +  \frac{\eps}{\eta} \cdot\frac{2CX}{\lambda} 
 		+\frac{\epsilon^2}{\eta}+ \frac{\eta Z^2}{2} \;\nonumber . 
 \end{align}
 We apply the upper bound obtained in Proposition \ref{prop_bounded} to get the second inequality in \eqref{eq:expectation_convexity2}. 
 {Taking the sum from $t=1$ to $T$ in \eqref{eq:expectation_convexity2} and dropping the negative terms from the right hand side, we get}
 \begin{align}\label{eq:expectation_convexity22}
 	  \sum_{t=1}^{T}[L_t(f_t(\bbx_t)) -L_t(f^\star(\bbx_t))]
 	  \leq&\frac{\| f_1 - f^\star \|_{\ccalH}^2}{2 \eta }+  \frac{2\epsilon T CX}{\eta\lambda} \nonumber
 	  \\
& 	  
 	  		+ \frac{\epsilon^2T}{\eta}+\frac{\eta Z^2 T}{2} \; . 
 \end{align}
 For a fixed step size $\eta$ such that $\eta<\frac{1}{\lambda}$,  we obtain the static regret as
 \begin{align}\label{static_Regret}
  \textbf{Reg}_T^S=\mathcal{O}\left(\frac{1+( \epsilon +\epsilon^2) T}{\eta}+\eta T\right)
  \end{align}
which grows sublinearly in $T$ for $\eta = \ccalO(T^{-1/2})$ and $\epsilon\in[0,T^{-1/2})$. For instance, $\textbf{Reg}_T^S\leq\mathcal{O}(\sqrt{T})$ if we select $\eta=\frac{1}{\sqrt{T}}$ and $\epsilon=\frac{1}{T}$. However, if instead we select $\eta$ and $\epsilon$ as follows
 \begin{align}\label{selection0}
  &\eta=\mathcal{O}(T^{-a}) \ \ \ \text{and}\ \ \  \eps=\mathcal{O}(T^{-b}).
 \end{align}
then the right-hand side of \eqref{static_Regret} becomes 
 %
 \begin{align}\label{last_111}
  \textbf{Reg}_T^S=&\mathcal{O}\left(T^b+T^{(1-(a-b))}+T^{1-b}\right) \; .
 \end{align}
 When we substitute \eqref{selection0} into Lemma \ref{theorem_model_order}, we obtain
  \begin{align}\label{eq:model_order_substitution_stepsizes}
                 M= \mathcal{O}(T^{p(a-b)}).
                 \end{align}
                 For the static regret to be sublinear, we need $b\in(0,1)$ and $a\in(b,b+\frac{1}{p})$. As long as the dimension  $p$ is not too large, we always have a range for $a$. This implies that $p(a-b)\in(0,1)$ to ensure sublinear model order $M$ as well.
 \end{proof}

Theorem \ref{theorem:static} establishes that Algorithm \ref{alg:soldd} exhibits the asymptotic no-regret property under appropriate step-size and compression budget selections, and that there is a tunable tradeoff between regret and memory: for tighter regret, one should set $\epsilon \in [0,T^{-1/2})$ to be closer to null. However, making $\epsilon$ too small causes the model order to grow as the time horizon $T$. This result exactly matches regret growth for parametric settings with $\epsilon=0$. The largest compression budget allowable that still yields asymptotic no-regret is $\epsilon=T^{-1/2} - \gamma$ for $\gamma >0$. Observe that by setting $\epsilon=\mathcal{O}(T^{-a})$ and $\eta=\mathcal{O}(T^{-b})$, we have that sublinear regret for $b\in(0,1)$ and $a\in(b,b+\frac{1}{p})$. To have the model complexity under control, via \eqref{eq:model_order_substitution_stepsizes}, this imposes constraints $p(a-b)\in(0,1) $. These observations are summarized in Table \ref{first_table}.
%
\begin{table}[]
             	\centering
                {\begin{tabular}{|c|c|c|c|}
                		\hline
                		$(a,b)$& Regret & Model Order $M$ & Comments	 \\
                		\hline
                		 $a=b$& $\mathcal{O}(T)$ & $\mathcal{O}(1)$                  	& \text{Linear regret} \\
                		\hline
                		$a-b=1/p$& $\mathcal{O}(T^{(p-1)/p})$ & $\mathcal{O}(T)$                  	& \text{Linear Model Order} \\
                		                		\hline
                		                		$a-b={1}/{(1+p)}$& $\mathcal{O}(T^{p/(1+p)})$ & $\mathcal{O}(T^{p/(1+p)})$                  	& \text{Sublinear Order Model \emph{and} Regret} \\
                		                		                		                		\hline
                	\end{tabular}}             	
                	\caption{Summary of convergence rates for different parameter selections.}
                	\label{first_table}
                \end{table}


 \section{Selection of the step size $\eta$ and $\epsilon$ for the Dynamic Case}\label{seelctoion_tradeoff}
In this section, we provide the detailed analysis of the parameters $(\eta,\epsilon)$ selection and the relative effect on the regret performance and model order.   First of all, we collect all the conditions for $\eta$ and $\epsilon$ imposed by the analysis in the paper. We have 
 \begin{align}
 \eta<\frac{1}{\lambda} \ \  \ \  \ \text{and}\ \ \ \ \eta <\frac{1}{L}.
 \end{align}
  Next, we combine the conditions and get
 \begin{align}
 \eta<\min\Bigg\{\frac{1}{\lambda},\frac{1}{L}\Bigg\}
 \end{align}
 which implies that there is an upper bound on the value of the step size. {Similarily, for the strongly convex case, we have  
  $\eta<\min\{\frac{1}{\lambda},\frac{\mu}{L^2}\}$.}
 
 Before proceeding with the analysis, let us make a common selection for $\epsilon$ as follows
 \begin{align}\label{selection}
   \eps=\mathcal{O}(T^{-\alpha}).
 \end{align}
 The values of the positive constant $\alpha$ will be decided later.  We discuss the $\epsilon$ selection for each of the case separately and corresponding upper bound on the number of elements in the dictionary.

 \textbf{(1) Dynamic regret convex case in terms of $V_T$}
 
 \begin{align}\label{dynamic_regret_vT21}
 \textbf{Reg}^D_T
 \leq& \ceil{\frac{T}{\triangle_T}}\mathcal{O}\left(\frac{1+\epsilon \triangle_T}{\eta}+\eta \triangle_T\right)+2\triangle_TV_T.
 \end{align}
 We can select it in a similar way to Case I with $\epsilon=\mathcal{O}(\triangle_T^{-\alpha})$ and $\eta=\mathcal{O}(\triangle_T^{-\beta})$.
 
 \textbf{(2) Dynamic regret, convex case, in terms of $W_T$:} The expression for dynamic regret in terms of $W_T$ and convex loss function is given by
        \begin{align}\label{eq:expectation_convexity321}
             	   \textbf{Reg}_T^D\leq&\left( {1 + \red{T\sqrt{\epsilon} } + W_T }  \right).                	   	 	   	 	 	 
                 \end{align}
                 Using the $\epsilon$ selection in \eqref{selection}, we can write 
                 \begin{align}\label{last_1111}
                  \textbf{Reg}_T^D=&\mathcal{O}\left(1+T^{\red{(1-\frac{a}{2})}}+W_T\right).
                 \end{align}
                 and \begin{align}
                                 M= & \mathcal{O}(T^{\alpha p}).
                                 \end{align}
                                 \red{For the regret and the model order to be sublinear up to the variations $W_T$, we need $\alpha\in(0,\frac{1}{p}]$. As long as the dimension  $p$ is not too large, we always have a range for $a$. This implies that $\red{\alpha p\in(0,1)}$ and hence $M$ is sublinear. }

                                   \begin{table}[!h]
                                                 	\centering
                                                 	\begin{tabular}{|c|c|c|c|}
                                                 		\hline
                                                 		$\alpha$& Regret & M & Comments	 \\
                                                 		\hline
                                                 		 \red{$\alpha=0$}& $\mathcal{O}(T)+W_T$ & $\mathcal{O}(1)$                  	& \text{Linear regret} \\
                                                 		\hline
                                                 		$\red{{\alpha}=\frac{1}{p}}$& $\mathcal{O}\left(T^{\frac{(2p-1)}{2p}}+W_T\right)$ & $\mathcal{O}(T)$                  	& \text{Linear } $M$ \\
                                                 		       		\hline
                                                		\red{$\alpha=\frac{1}{p+1}$}& $\mathcal{O}\left(T^{\frac{2p+1}{2p+2}}+W_T\right)$ & $\mathcal{O}(T^{p/(1+p)})$                  	& \text{Sublinear } $M$ \\
                                                 		                		                		                		\hline
                                                 	\end{tabular}\vspace{2mm}
                                                 	
                                                 	\caption{Summary of dynamic regret rates for convex loss function. }
                                                 	\label{tab:my_label2}
                                                 \end{table}
                 
 \textbf{(4) Dynamic regret, strongly convex case, in terms of $W_T$:} The expression for dynamic regret in terms of $W_T$ and strongly convex loss function is given by
                 \begin{align}\label{eq:iterate_prop1_subst1341}
                  \textbf{Reg}_T^D\leq&\mathcal{O}\left( \frac{1 + \red{T\sqrt{\epsilon}} + W_T }{1-\rho}  \right)\nonumber 
                  \\
                  \leq & o\left( {1 + \red{T\sqrt{\epsilon}} + W_T }\right).
                 \end{align}
                 where $\rho=\sqrt{(1-2\eta(\mu-\eta L^2))}$. The expression in \eqref{eq:iterate_prop1_subst1341} is similar to the one on \eqref{eq:expectation_convexity321} except for the term $(1-\rho)$ in the denominator. If we choose $\eta$ such that $(1-\rho)>\eta$, the results for strongly convex functions is improved. Rearrange this expression to obtain
                 $$ (1 - \eta)^2 >\rho^2 = 1 - 2 \eta (\mu - \eta L^2)$$
                 which, upon solving for a condition on $\eta$, simplifies to 
                 $$\eta < \frac{2 (\mu - 1)}{2 L^2 -1}.$$
\red{                 Next, we summarize the dynamic regret rates achieved for a constant $\eta$ and $\epsilon=\mathcal{O}(T^{-\alpha})$ with different $\alpha$ in Table \ref{tab:my_label222}. }
                 %
                 %
                                                    \begin{table}[!h]
                                                                  	\centering
                                                                  	\begin{tabular}{|c|c|c|c|}
                                                                  		\hline
                                                                  		$\alpha$& Regret & M & Comments	 \\
                                                                  		\hline
                                                                  		 \red{$\alpha=0$}& $o(T)+W_T$ & $o(1)$                  	& \text{Linear regret} \\
                                                                  		\hline
                                                                  		$\red{{\alpha}=\frac{1}{p}}$& ${o}\left(T^{\frac{(2p-1)}{2p}}+W_T\right)$ & ${o}(T)$                  	& \text{Linear } $M$ \\
                                                                  		       		\hline
                                                                 		\red{$\alpha=\frac{1}{p+1}$}& ${o}\left(T^{\frac{2p+1}{2p+2}}+W_T\right)$ & ${o}(T^{p/(1+p)})$                  	& \text{Sublinear } $M$ \\
                                                                  		                		                		                		\hline
                                                                  	\end{tabular}\vspace{2mm}
                                                                  	
                                                                  	\caption{Summary of dynamic regret rates for strongly convex loss function. }
                                                                  	\label{tab:my_label222}
                                                                  \end{table}

\end{document}